\newtheorem{theorem}{Theorem}[section]
\newtheorem{lemma}[theorem]{Lemma}
\newtheorem{proposition}{Proposition}
\theoremstyle{definition}
\newtheorem{remark}{Remark}
\let\oldref\ref
\renewcommand{\ref}[1]{(\oldref{#1})}  
\renewcommand{\eqref}[1]{(\oldref{#1})} 
\font\tenrm=cmr10
\font\teni=cmmi10 \skewchar\teni='177
\font\tensy=cmsy10 \skewchar\tensy='60
\font\tenex=cmex10
\font\tenit=cmti10
\font\tensl=cmsl10
\font\tenbf=cmbx10
\font\tentt=cmtt10
\font\ninerm=cmr9
\font\ninei=cmmi9 \skewchar\ninei='177
\font\ninesy=cmsy9 \skewchar\ninesy='60
\font\nineit=cmti9
\font\ninesl=cmsl9
\font\ninebf=cmbx9
\font\ninett=cmtt9
\font\eightrm=cmr8
\font\eighti=cmmi8 \skewchar\eighti='177
\font\eightsy=cmsy8 \skewchar\eightsy='60
\font\eightit=cmti8
\font\eightsl=cmsl8
\font\eightbf=cmbx8
\font\eighttt=cmtt8
\font\sevenrm=cmr7
\font\seveni=cmmi7 \skewchar\seveni='177
\font\sevensy=cmsy7 \skewchar\sevensy='60
\font\sevenbf=cmbx7
\font\sevenit=cmmi7
\font\sevensl=cmmi7
\font\seventt=cmr7
\font\sixrm=cmr6
\font\sixi=cmmi6 \skewchar\sixi='177
\font\sixsy=cmsy6 \skewchar\sixsy='60
\font\sixbf=cmbx6
\font\fiverm=cmr5
\font\fivei=cmmi5 \skewchar\fivei='177
\font\fivesy=cmsy5 \skewchar\fivesy='60
\font\fivebf=cmbx5
\def\tenpoint{\def\rm{\fam0\tenrm}%
        \textfont0=\tenrm \scriptfont0=\sevenrm \scriptscriptfont0=\fiverm
        \textfont1=\teni \scriptfont1=\seveni \scriptscriptfont1=\fivei
        \textfont2=\tensy \scriptfont2=\sevensy \scriptscriptfont2=\fivesy
        \textfont3=\tenex \scriptfont3=\tenex \scriptscriptfont3=\tenex
        \def\it{\fam\itfam\tenit}%
        \textfont\itfam=\tenit
        \def\sl{\fam\slfam\tensl}%
        \textfont\slfam=\tensl
        \def\bf{\fam\bffam\tenbf}%
        \textfont\bffam=\tenbf \scriptfont\bffam=\sevenbf
                \scriptscriptfont\bffam=\fivebf
        \def\tt{\fam\ttfam\tentt}%
        \textfont\ttfam=\tentt
        \normalbaselineskip=12pt%
        \let\sc=\eightrm        
        \setbox\strutbox=\hbox{\vrule height8.5pt depth3.5pt width0pt}%
        \normalbaselines\rm}
\def\ninepoint{\def\rm{\fam0\ninerm}%
        \textfont0=\ninerm \scriptfont0=\sixrm \scriptscriptfont0=\fiverm
        \textfont1=\ninei \scriptfont1=\sixi \scriptscriptfont1=\fivei
        \textfont2=\ninesy \scriptfont2=\sixsy \scriptscriptfont2=\fivesy
        \textfont3=\tenex \scriptfont3=\tenex \scriptscriptfont3=\tenex
        \def\it{\fam\itfam\nineit}%
        \textfont\itfam=\nineit
        \def\sl{\fam\slfam\ninesl}%
        \textfont\slfam=\ninesl
        \def\bf{\fam\bffam\ninebf}%
        \textfont\bffam=\ninebf \scriptfont\bffam=\sixbf
                \scriptscriptfont\bffam=\fivebf
        \def\tt{\fam\ttfam\ninett}%
        \textfont\ttfam=\ninett
        \normalbaselineskip=11pt%
        \let\sc=\sevenrm        
        \setbox\strutbox=\hbox{\vrule height8pt depth3pt width0pt}%
        \normalbaselines\rm}
\def\eightpoint{\def\rm{\fam0\eightrm}%
        \textfont0=\eightrm \scriptfont0=\sixrm \scriptscriptfont0=\fiverm
        \textfont1=\eighti \scriptfont1=\sixi \scriptscriptfont1=\fivei
        \textfont2=\eightsy \scriptfont2=\sixsy \scriptscriptfont2=\fivesy
        \textfont3=\tenex \scriptfont3=\tenex \scriptscriptfont3=\tenex
        \def\it{\fam\itfam\eightit}%
        \textfont\itfam=\eightit
        \def\sl{\fam\slfam\eightsl}%
        \textfont\slfam=\eightsl
        \def\bf{\fam\bffam\eightbf}%
        \textfont\bffam=\eightbf \scriptfont\bffam=\sixbf
                \scriptscriptfont\bffam=\fivebf
        \def\tt{\fam\ttfam\eighttt}%
        \textfont\ttfam=\eighttt
        \normalbaselineskip=9pt%
        \let\sc=\sixrm  
        \setbox\strutbox=\hbox{\vrule height7pt depth2pt width0pt}%
        \normalbaselines\rm}
\def\sevenpoint{\def\rm{\fam0\sevenrm}%
        \textfont0=\sevenrm \scriptfont0=\fiverm \scriptscriptfont0=\fiverm
        \textfont1=\seveni \scriptfont1=\fivei \scriptscriptfont1=\fivei
        \textfont2=\sevensy \scriptfont2=\fivesy \scriptscriptfont2=\fivesy
        \textfont3=\tenex \scriptfont3=\tenex \scriptscriptfont3=\tenex
        \def\it{\fam\itfam\sevenit}%
        \textfont\itfam=\sevenit
        \def\sl{\fam\slfam\sevensl}%
        \textfont\slfam=\sevensl
        \def\bf{\fam\bffam\sevenbf}%
        \textfont\bffam=\sevenbf \scriptfont\bffam=\fivebf
                \scriptscriptfont\bffam=\fivebf
        \def\tt{\fam\ttfam\seventt}%
        \textfont\ttfam=\seventt
        \normalbaselineskip=8pt%
        \let\sc=\fiverm  
        \setbox\strutbox=\hbox{\vrule height6pt depth2pt width0pt}%
        \normalbaselines\rm}
\newbox\boxaddrone \newbox\boxaddrtwo
\title[Simulatenous recovery of diffusion coefficient and reaction term]
      {On the simultaneous recovery of the conductivity
and the nonlinear reaction term in a parabolic equation}
\author[Barbara Kaltenbacher and William Rundell]{}
\subjclass{Primary: 35R30; 65M32; Secondary: 35R11.}
 \keywords{Inverse problem, undetermined coefficients, reaction-diffusion equation}
 \email{barbara.kaltenbacher@aau.at}
 \email{rundell@math.tamu.edu}
\thanks{The first author is supported by FWF grant P30054; the second author is supported by NFS grant {\sc dms}-1620138}
\thanks{$^*$ Corresponding author: William Rundell}
\begin{document}
\maketitle

\centerline{\scshape Barbara Kaltenbacher}
\medskip
{\footnotesize
 \centerline{Department of Mathematics,
Alpen-Adria-Universit\"at Klagenfurt}
   \centerline{9020 Klagenfurt, Austria}
} 

\medskip

\centerline{\scshape William Rundell$^*$ }
\medskip
{\footnotesize
 \centerline{Department of Mathematics,
Texas A\&M University}
   \centerline{College Station, Texas 77843, USA}
}

\bigskip

 \centerline{(Communicated by Fioralba Cakoni)}

\begin{abstract}
This paper considers the inverse problem of recovering both the
unknown, spatially-dependent conductivity $a(x)$ and the nonlinear reaction
term $f(u)$ in a reaction-diffusion  equation
from overposed data.
These measurements can  consist of:
the value of two different solution measurements taken at a later time $T$;
time-trace profiles from two solutions;
or both final time and time-trace measurements from a single forwards
solve  data run.
We prove both uniqueness results and the convergence of iteration
schemes designed to recover these coefficients.
The last section of the paper shows numerical reconstructions based on these
algorithms.
\end{abstract}

\section{Introduction}\label{sect:introduction}

In their most basic form reaction-diffusion equations are nonlinear
parabolic equation of the type
$ u_t(x,t) + \mathbb{L}u = f(u)$ where $\,\mathbb{L}$ is an elliptic operator
defined on a domain $\Omega\subset \mathbb{R}^d$, $d\geq 1$
and $f(u)$ is a function only of the state or dependent variable $u$
which is defined on $\Omega\times [0,t]$ for some $T>0$.

Such equations arise in a wide variety of applications
and two examples of these models are in ecology where $u$
represents the population
density at a fixed point $x$ and time $t$ and $f(u)$ is frequently taken
to be quadratic in $u$ as in the Fisher model;
or in chemical reactions where $f$ is cubic in
the case of the combustion theory of Zeldovich and Frank-Kamenetskii,
\cite{Grindrod:1996, Murray:2002,Kuttler:17}.
In these models $\mathbb{L}$ is of second order but higher order operators
also feature in significant applications, for example in the Cahn-Hilliard
equation~\cite{CahnHillard:1958,ElliottSungmu:1986}.

The physical interpretation of these equations is the time rate
of change of  $u$ is a sum of two components:
the diffusion term given by $\mathbb{L}u$ and the driving term $f(u)$.
Often these are in competition, the former tending to disperse the value
of $u$ and the latter, if $f>0$, to increase it.
If $f$ is negative or positive but sublinear, then on a bounded domain
the solution will usually decay exponentially (but depending on the boundary
conditions/values and the eigenvalues of the combined operator $\mathbb{L}$
being positive).
At the other extreme if the growth rate of $f$ is sufficiently large
then global existence in time of the solution can fail.
This is a complex but well-documented situation, see for example,
\cite{AronsonWeinberger:1978,Levine:1990} and the references within.
More general references for semilinear parabolic equations
are~\cite{Friedman:1964,Friedman:1969}.

In most applications one assumes these terms in the equation are known,
that is, the coefficients in the elliptic operator and the exact form
of $f(u)$.  Indeed, frequently, $f(u)$ is taken to be a polynomial
or possibly a rational function of low degree so that only a few parameters
have to be determined by a least squares fit to the data.
Even if the diffusion constant has also to be determined the method remains
fundamentally the same.
In this paper we make no such assumptions: both our diffusion coefficient and
the nonlinear term can be arbitrary functions within a given smoothness class.
We shall take $\mathbb{L}u = -\nabla\cdot\bigl(a(x)\nabla u\bigr)$ where
the diffusion coefficient $a(x)$ is unknown.

With this background setting our basic model equation is thus
\begin{equation}\label{eqn:basic_pde_parabolic}
u_t(x,t) -\nabla\cdot(a(x)\nabla u(x,t)) = f(u) + r(x,t,u)
\end{equation}
where $r(x,t,u)$ is a known forcing function.
Boundary conditions for \eqref{eqn:basic_pde_parabolic}
will be of the impedance form
\begin{equation}\label{eqn:bdry_cond}
a\frac{\partial u}{\partial\vec{n}} + \gamma u = {b},\qquad
x\in\partial\Omega,\quad t>0,
\end{equation}
with possibly space dependent but known coefficient $\gamma$ (where $\gamma=\infty$ indicates the case of Dirichlet boundary conditions) as well as given space and time dependent right hand side $b$.
We impose the initial condition
\begin{equation}\label{eqn:init_cond}
u(x,0) = u_0(x),\qquad x\in\Omega.
\end{equation}
Now of course the recovery of the coefficient $a$ and the term $f(u)$ requires
over-posed data and we shall consider several different forms and combinations.

The two basic forms are final time and boundary measurements of $u$.
The former consists of the value $g(x) = u(x,T)$ at some later time $T$
and corresponds to such conditions as a thermal or concentration  map of $u$
within the region $\Omega$ or census data taken at a fixed time.
The latter consists of a time trace $h(t) = u(\tilde x,t)$ for some fixed
point $\tilde x$ on $\partial\Omega$.

Combinations include measurements of one of these for two different
sets of initial / boundary conditions corresponding to solutions $u(x,t$ and
$v(x,t)$ or a measurement of both of these for a single data run.
As we will see, some of these combinations are superior to others and the
difference can be substantial.

The main technique we will use follows that of previous work by the
authors and others:  we project the differential equation onto the
boundaries containing the overposed data and obtain nonlinear equations
for our unknowns $a(x)$ and $f(u)$ which we then solve by iterative methods.
This idea for recovering the nonlinearity $f(u)$ in equations such as
\eqref{eqn:basic_pde_parabolic}
was first used in \cite{PilantRundell:1986,PilantRundell:1987}
under time-trace data.
References for recovering coefficients in parabolic equations in general,
but specifically including final-time data,
are~\cite{Isakov:1991,Isakov:2006}.
A recent such problem for the subdiffusion case based on fractional derivatives
is found in~\cite{ZhangZhi:2017}.

More recently, the authors have considered several inverse problems
based on the above equation.
In \cite{KaltenbacherRundell:2019b} the equation taken was
$u_t - \triangle u = q(x)g(u)$ where $g(u)$ was assumed known and $q(x)$
which determines the unknown spatial strength of $g$, had to be recovered
from final time data.
In \cite{KaltenbacherRundell:2019c} the equation taken was
$u_t - \triangle u = f(u)$ and recovery of the unknown $f(u)$ was again from
final time data.
The case of $\mathbb{L}u = -\nabla\cdot\bigl(a(x)\nabla u\bigr) + q(x)u$
where both  $a(x)$ and $q(x)$ are unknown and have to be determined
from a pair of runs each providing final-time data was considered in
\cite{KaltenbacherRundell:2020a}.
In this work the equation taken was linear, so that $f(u)$ did not appear.

Our main tool is based on the technique of projecting the differential
operator onto that segment of the boundary where overposed data lies
and thereby obtain an operator equation suitable for fixed point iteration
and the attendant methods for showing convergence of such schemes.
Other possibilities exist and have been widely used such as
Newton-type schemes.
For the problems at hand these techniques are certainly viable candidates
but require, often considerably, more complex analysis,
in particular, convergence conditions that are likely
not satisfied by highly nonlinear problems.
In a departure from most of the recent work in this area we will
use a Schauder space setting for the parabolic equation.
This offers several advantages including the fact that regularity estimates
for parabolic equations set in Schauder space are independent of spatial
dimension.

In each of the above papers the diffusion process was extended to include
the subdiffusion case where the time-derivative was of fractional type.
The very rough picture that evolves from this modification is no change
to the uniqueness questions of recovery of terms but possibly
substantial changes in the degree of ill-conditioning, especially when
either only very short or when very long time values are present.
Certainly, the analysis is more complex when fractional derivatives are
included.
In the present work we will consider only the classical parabolic situation
as we don't see sufficient novelty of outcome in the central ideas
to merit the analytical complexities that a comprehensive inclusion would
entail although we will sometimes comment on particular instances.

We make a simple, but essential, overview comment on inverse problems
seeking to recover a term that involves the dependent variable $u$.
One can only recover such a term over a range of values that are contained
in the data measurement range and, further, during the evolution process
the range of the solution $u$ cannot have taken values outside of this range.
This makes such problems quite distinct from determining coefficients
where the required range is known from the geometry of the domain $\Omega$.
From a practical perspective this means that the experimental set up
must take this point into consideration and an arbitrary mix of initial
and boundary conditions and values is unlikely to satisfy this range
condition.

There is some way around this issue by representing the unknown function,
here $f(u)$, in a chosen basis set.
If $f$ is known to have a simple form then a few well-chosen elements
may suffice, but otherwise the required extrapolation will lead to
a severely ill-posed problem.
We are not considering this case as we seek a methodology that includes
quite complex situations.
In addition, the iteration schemes we propose are best formulated
in a pointwise form and this would be precluded without a range condition.

In the current work we seek to determine a pair of unknown coefficients.
One of these, $a(x)$ is only spatially dependent but strongly influences the
resulting solution profile.
The reaction term $f(u)$ also might have a strong influence on the
solution profile but
it can also be such that it is small in magnitude and its influence on the
solution is dominated by the diffusion process governed by $a(x)$.
This case is actually more difficult than when the two terms have roughly
equal effect on the solution.
In either situation the range condition must still be in place.
In section \oldref{sect:recons} we will see a situation where violation of
this can occur in a rather subtle way leading to incorrect reconstructions.

The plan of the paper is to introduce the iterative schemes that will
be used to recover both $a$ and $f$ in section~\oldref{sec:conv}
and present conditions under which we are able to show a convergence analysis.
This section is broken down into three cases depending on the data type being
measured: two runs with different initial/boundary data and each
measuring a final time $g(x) = u(x,T)$ for some fixed $t=T$;
two runs each measuring a time trace of the solution $u$ at a fixed
point $\tilde x\in\partial\Omega$; and a single run with measurements of
both a time trace and a final time.
We will also provide a convergence analysis of each of these schemes.
The following section~\oldref{sect:recons}  will show actual reconstructions
using these iterative schemes and demonstrate both their capabilities and their
limitations.

Before proceeding to the above agenda we should make some comments
on physical scaling as this will be very relevant to our reconstructions in
section~\oldref{sect:recons}.
${\mathbb L}$ has been written with a single coefficient $a(x)$ but
more general possibilities would include:
$\;{\mathbb L}u := -\nabla\cdot(a\nabla u) + d(x)\nabla_i u + q(x) u$.
In this setting $d$ would correspond to a drift term and $q$ a potential
or as a linear forcing term with a space-dependent modifier of its
magnitude.
Equation~\eqref{eqn:basic_pde_parabolic} then appears with several
terms all of which have specific physical attributes and in consequence
scale factors depending on the situation.
By this we mean that depending on the context the range of value
taken by the various terms, including our assumed unknowns $a(x)$ and $f(u)$
may have enormous differences.
Examples for the diffusion coefficient range from around
$10^{-6}$ to $10^{-5}$ metres$^2$/sec for molecules in a gas to about
$10^{-10}$ metres$^2$/sec for molecules in a liquid such as water,
\cite{Britanicca:2020},
and in this case the effect of drift represented by $d(x)$
 can overshadow the diffusion effect
(of course the domain size of $\Omega$ also plays a role).
Because of this we must be careful in interpreting the results of
numerical experiments which have taken generic order unity values
as the default.

\def\ul#1{\underline{#1}}
\def\lamtil{\tilde{\lambda}}
\def\phitil{\tilde{\varphi}}
\def\matrix#1#2{\begin{array}{cc}{#1}\\{#2}\end{array}}
\def\fhatpluspp{(\hat{f}^+)''}
\def\fhatplusp{(\hat{f}^+)'}
\section{Convergence analysis}\label{sec:conv}
We consider the inverse problem of recovering the spatially varying diffusion $a(x)$ and the nonlinearity $f(u)$ in the context of three different observation settings:
\begin{itemize}
\item
observation of two states (driven by different initial and boundary conditions as well as forcing terms) at the same final time $T$
\item
observation of one or two states at final time and at a (boundary or interior) point $\tilde{x}$ over time
\item
observation of two different states at $\tilde{x}$ over time
\end{itemize}
We discuss each of these cases in a separate subsection below.

In the first two cases we can follow the approach of projecting the PDE onto the
observation manifold; in the third case this is only partly possible, since
time trace measurements do not map directly into space dependence (of $a$).
While the methods themselves clearly extend to higher space dimensions in the
first two cases (in the third case they cannot, by an elementary dimension count)
we provide an analysis in one space dimension $\Omega=(0,L)$ only.
Possibilities and limitations with respect to to higher space dimensions will
be indicated in some remarks.

Since well-definedness of the methods to be discussed relies on existence, uniqueness and regularity of solutions to the semilinear parabolic initial boundary value problem \eqref{eqn:basic_pde_parabolic}, \eqref{eqn:bdry_cond}, \eqref{eqn:init_cond}, we will provide a statement from the literature \cite{Friedman:1964} on well-posedness of this forward problem.
For our purposes it suffices to consider the case of homogeneous boundary conditions ($b=0$ in \eqref{eqn:bdry_cond}). As a matter of fact, it would be enough to look at the spatially 1-d case for which we do the analysis; however this would not change so much in the assumptions, since the Schauder space results in \cite{Friedman:1964} are independent of the space dimensions.
In case $\gamma=\infty$ (Dirichlet boundary value problem) we can directly make use of \cite[Theorem 9, Chap. 7 Sec. 4, page 205]{Friedman:1964} for existence and of \cite[Theorem 6,  Chap. 7 Sec. 4, page 205]{Friedman:1964} for uniqueness.
In case of Neumann or impedance conditions $\gamma<\infty$, these can be extended by means of the results from \cite[Theorem 9, Chap. 5 Sec. 3, page 144 ff.]{Friedman:1964}, noting that these corresond to the second boundary value problem in \cite{Friedman:1964} with $\beta=\frac{\gamma}{a}$, $g=\frac{b}{a}$.
\begin{theorem}\label{th:wellposedforward}
Let $\Omega$ be a $C^{2+s}$ domain for some $s>0$, let $f$ and $r$ be H\"older continuous on bounded sets and $f$ satisfy the growth conditions
$uf(u)\leq A_1 u^2+A_2$ 
$|f(u)|\leq A(u)$ for some positive monitonically increasing function $A$ 
and let $a$, $\nabla a$ be H\"older continuous (note that the differential operator in \cite{Friedman:1964} is in nondivergence form) with $a$ being positive and bounded away from zero on $\overline{\Omega}$.
Moreover, let $u_0\in C^{2+s}(\overline{\Omega})$ satisfy the compatibility conditions
$u_0=0$ and $\nabla\cdot(a(x)\nabla u_0(x)) = f(u_0(x)) + r(x,0,u_0)$ for $x\in\partial\Omega$ if $\gamma=\infty$,
or vanish in a neighborhood of $\partial \Omega$ in case $\gamma<\infty$.\\
Then there exists a classical solution $u$ with $u_t,\,u_{x_i},\,u_{x_ix_j}$ H\"older continuous of \eqref{eqn:basic_pde_parabolic}, \eqref{eqn:bdry_cond}, \eqref{eqn:init_cond} with $b=0$.\\
If additionally $f$ is Lipschitz continuous, then this solution is unique.
\end{theorem}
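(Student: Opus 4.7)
The plan is to reduce the problem to the form treated in Friedman's monograph and invoke the cited results. First I would expand the divergence-form operator as $-\nabla\cdot(a(x)\nabla u) = -a(x)\Delta u - \nabla a(x)\cdot\nabla u$, obtaining the nondivergence-form second-order parabolic operator used throughout \cite{Friedman:1964}. The H\"older continuity of $a$ and $\nabla a$ together with the positive lower bound on $a$ ensure uniform parabolicity and the coefficient regularity required by the Schauder theory. Absorbing $f(u)+r(x,t,u)$ into a single semilinear source $F(x,t,u)$, the H\"older-on-bounded-sets assumption on $f,r$ yields local H\"older continuity of $F$ along any solution lying in a bounded set, which is precisely the structural assumption needed to apply the cited theorems.

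For existence in the Dirichlet case $\gamma=\infty$, I would apply \cite[Thm.\ 9, Ch.\ 7, Sec.\ 4]{Friedman:1964} to produce a local classical solution with $u_t, u_{x_i}, u_{x_ix_j}$ H\"older continuous; the stated compatibility conditions at $t=0$ on $\partial\Omega$ are exactly those needed to avoid corner singularities. To extend this local solution to all of $[0,T]$, I would exploit the growth assumption $uf(u)\leq A_1 u^2+A_2$: multiplying \eqref{eqn:basic_pde_parabolic} by $u$, integrating over $\Omega$, integrating by parts (the Dirichlet or impedance boundary conditions yield a nonnegative boundary contribution), and applying Gronwall gives an a priori $L^2$ bound; combined with $|f(u)|\leq A(u)$ and a De Giorgi / Moser iteration one obtains an a priori $L^\infty$ bound on $[0,T]$. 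The pointwise bound restricts $F$ to a bounded range of $u$ on which it is H\"older, so the local solution cannot blow up in finite time and extends to $[0,T]$. For the Robin/Neumann case $\gamma<\infty$, I would divide \eqref{eqn:bdry_cond} by $a$ to rewrite it as $\partial u/\partial\vec{n}+\beta u=0$ with $\beta=\gamma/a$, matching Friedman's second boundary value problem; since $u_0$ is assumed to vanish in a neighborhood of $\partial\Omega$ the compatibility conditions are automatic, and the existence result \cite[Thm.\ 9, Ch.\ 5, Sec.\ 3]{Friedman:1964} applies, with the same a priori bounds extending the solution globally in time.

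For uniqueness under Lipschitz $f$, I would take two classical solutions $u_1,u_2$ sharing initial and boundary data, set $w=u_1-u_2$, and obtain the linear equation $w_t-\nabla\cdot(a\nabla w)=f(u_1)-f(u_2)$ with homogeneous data. Multiplying by $w$, integrating over $\Omega$, using the boundary condition to discard a nonnegative boundary term, and applying $|f(u_1)-f(u_2)|\leq L|w|$ yields $\tfrac{d}{dt}\|w(\cdot,t)\|_{L^2(\Omega)}^2\leq 2L\|w(\cdot,t)\|_{L^2(\Omega)}^2$, so Gronwall gives $w\equiv 0$; in the Dirichlet case \cite[Thm.\ 6, Ch.\ 7, Sec.\ 4]{Friedman:1964} delivers the same conclusion directly. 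I expect the main obstacle to be the a priori $L^\infty$ control: Friedman's existence results are essentially local in time for general semilinear sources, and it is precisely the two growth conditions on $f$ that are needed to rule out finite-time blow-up and justify invoking the Schauder theorems on the full interval $[0,T]$; the rest of the argument is a bookkeeping exercise of matching the divergence-form PDE with overposed impedance conditions to Friedman's nondivergence-form setup.
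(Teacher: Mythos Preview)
Your proposal is correct and follows essentially the same route as the paper: the paper does not give a self-contained proof but simply indicates in the paragraph preceding the theorem that one rewrites the divergence-form operator in nondivergence form, applies \cite[Thm.~9, Ch.~7, Sec.~4]{Friedman:1964} (Dirichlet) or \cite[Thm.~9, Ch.~5, Sec.~3]{Friedman:1964} (impedance, via $\beta=\gamma/a$) for existence, and \cite[Thm.~6, Ch.~7, Sec.~4]{Friedman:1964} for uniqueness. Your a~priori $L^\infty$ bound and energy-estimate uniqueness argument spell out more detail than the paper provides, but note that the growth conditions $uf(u)\le A_1u^2+A_2$ and $|f(u)|\le A(u)$ are precisely Friedman's own hypotheses for his global existence theorem, so the local-to-global extension you describe is already built into the cited result rather than something you need to supply separately.
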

Note that the results taken from \cite{Friedman:1964} deal with solutions on bounded time domains, which allows to impose less restrictive growth conditions than those needed on all of $[0,\infty)$ as a time interval.
Since we will consider functions $f$ that are nonconstant on a bounded domain only, Lipschitz continuity of $f$ is sufficient for the smoothness and growth conditions in Theorem \oldref{th:wellposedforward}.

\subsection{Final time observations of two states}\label{subsec:axfu-twofinal}

Identify $a(x)$, $f(u)$ in
\begin{eqnarray}
&&u_t-(a u_x)_x=f(u)+r^u \quad t\in(0,T)\,, \quad u(0)=u_0\label{eqn:u-f}\\
&&v_t-(a v_x)_x=f(v)+r^v \quad t\in(0,T)\,,  \quad v(0)=v_0\label{eqn:v-f}
\end{eqnarray}
with homogeneous impedance boundary conditions
\begin{equation}\label{eqn:bc}
a\partial_{\vec{n}} u+\gamma^u u =0\,, \quad a\partial_{\vec{n}} v+\gamma^v v =0 \quad\mbox{ on }\partial\Omega
\end{equation}
from final time data
\begin{equation}\label{eqn:finaltimedata}
g^u(x)=u(x,T)\,, \quad g^v=v(x,T)\,, \quad x\in\Omega,
\end{equation}
where $\Omega=(0,L)$.

In order for this data to contain enough information to enable identification
of both $a$ and $f$, it is essential that the two solutions $u$ and $v$ are
sufficiently different, which is mainly achieved by the choice of the two
different driving terms $r^u$ and $r^v$, while the boundary conditions and
later on also the initial data will just be homogeneous.

Projecting the PDEs \eqref{eqn:u-f}, \eqref{eqn:v-f} onto the measurement
manifold $\Omega\times\{T\}$ and denoting by $u(x,t;a,f)$, $v(x,t;a,f)$ their
solutions with coefficients $a$ and $f$, we obtain equivalence of the inverse
problem \eqref{eqn:u-f}--\eqref{eqn:finaltimedata} to a fixed point equation
with the fixed point operator
$\mathbb{T}$ defined by $(a^+,f^+)=\mathbb{T}(a,f)$ with
\begin{equation}\label{eqn:fp_fiti-fiti}
\begin{aligned}
&(a^+ g^u_x)_x(x)+f^+(g^u(x))=D_tu(x,T;a,f)-r^u(x,T) \quad x\in\Omega\\
&(a^+ g^v_x)_x(x)+f^+(g^v(x))=D_tv(x,T;a,f)-r^v(x,T) \quad x\in\Omega\,.
\end{aligned}
\end{equation}
Thus, the iterates are implicitly defined by
\[
\mathbb{M}(a^+,f^+)=\mathbb{F}(a,f) +\mathrm{b}\,,
\]
with the linear operator $\mathbb{M}$, the nonlinear operator $\mathbb{F}$, and the inhomogeneity $\mathrm{b}$ defined by
\begin{equation}\label{eqn:MFb}
\begin{aligned}
\mathbb{M}(a^+\!,f^+)=
\left(\begin{array}{c}\!(a^+ g^u_x)_x+f^+(g^u)\\(a^+ g^v_x)_x+f^+(g^v)\end{array}\right)\,, \quad
\\
\mathbb{F}(a,f)=
\left(\begin{array}{c}u_t(\cdot,T;a,f)\\ v_t(\cdot,T;a,f)\end{array}\right)\,, \quad
\mathrm{b}=
-\left(\begin{array}{c}\!r^u(\cdot,T)\!\\ \!r^v(\cdot,T)\end{array}\right)\,.
\end{aligned}\end{equation}

The question of self mapping and contractivity of $\mathbb{T}$ in a neighbourhood of an exact solution $(a_{ex},f_{ex})$ leads us to consider the differences $\hat{a}^+=a^+-a_{ex}$, $\hat{f}^+=f^+-f_{ex}$, $\hat{a}=a-a_{ex}$, $\hat{u}=u(\cdot,\cdot;a,f)-u_{ex}$, $\hat{v}=v(\cdot,\cdot;a,f)-v_{ex}$,  along with the identity
\[
\mathbb{M}(\hat{a}^+,\hat{f}^+)=\mathbb{F}(a,f)-\mathbb{F}(a_{ex},f_{ex})
\]
(note that the inhomogeneities cancel out)
so that
\begin{eqnarray*}
&&(\hat{a}^+ g^u_x)_x(x)+\hat{f}^+(g^u(x))=D_t\hat{u}(x,T) \quad x\in\Omega \\
&&(\hat{a}^+ g^v_x)_x(x)+\hat{f}^+(g^v(x))=D_t\hat{v}(x,T) \quad x\in\Omega \,.
\end{eqnarray*}
As in \cite{KaltenbacherRundell:2020a}, the convergence estimates consist of two steps:
\begin{enumerate}
\item[(a)] prove bounded invertibility of $\mathbb{M}$, i.e., estimate appropriate norms of $a^+$, $f^+$
\begin{eqnarray}
&&(\hat{a}^+ g^u_x)_x(x)+\hat{f}^+(g^u(x))=\check{u}(x,T) \quad x\in\Omega\label{eqn:diffu}\\
&&(\hat{a}^+ g^v_x)_x(x)+\hat{f}^+(g^v(x))=\check{v}(x,T) \quad x\in\Omega\label{eqn:diffv}
\end{eqnarray}
by appropriate norms of $\check{u}$, $\check{v}$.
\item[(b)] prove smallness of $\mathbb{F}(a,f)-\mathbb{F}(a_{ex},f_{ex})$, i.e., estimate $\check{u}$, $\check{v}$ in terms of the above chosen norms of $\hat{a},\hat{f}$, using the fact that $\check{u}$, $\check{v}$ satisfy certain
{\sc pde}s.
\end{enumerate}

\paragraph{Step (a), bounding $\mathbb{M}^{-1}$}:
Unlike that of \cite{KaltenbacherRundell:2020a},
the elimination strategies are limited.
In this reference  $\hat{q}^+$ was eliminated in order to estimate
$\hat{a}^+$ and then $\hat{a}^+$ was eliminated for estimating $\hat{q}^+$.
In the current situation we cannot eliminate $\hat{f}^+$ any more;
but we can in fact eliminate $\hat{a}^+$ and express $\hat{f}^+$ in terms of $\check{u}$ and $\check{v}$.
This step is considerably more complicated than the $\hat{a}^+$
elimination in \cite{KaltenbacherRundell:2020a}
and is carried out by the following steps.

We eliminate $\hat{a}^+$ by integrating \eqref{eqn:diffu}, \eqref{eqn:diffv} with respect to $x$ and multiplying with $g^v_x(x)$ and $-g^u_x(x)$, respectively,
\[
\begin{aligned}
&g^v_x(x) \int_x^L \hat{f}^+(g^u(\xi))\, d\xi - g^u_x(x) \int_x^L \hat{f}^+(g^v(\xi))\, d\xi \\
&= g^v_x(x) \int_x^L \check{u}(\xi,T)\, d\xi - g^u_x(x) \int_x^L \check{v}(\xi,T)\, d\xi
\end{aligned}
\]
Here we have avoided boundary terms involving $\hat{a}$ by assuming that either $a(L)$ is known (so that $\hat{a}(L)=0$) or homogeneous Neumann boundary conditions are imposed on the right hand boundary (so that $g^u_x(L)=g^v_x(L)=0$).
In order to arrive at a first kind Volterra integral equation for $\hat{f}^+$, we assume that $g^u$ is strictly monotone (w.l.o.g. increasing) with
\begin{equation}\label{eqn:mu}
g^u_x(x)\geq\mu>0  \quad x\in\Omega
\end{equation}
and divide by $g^u_x(x)$ to get
\[
\begin{aligned}
&\tfrac{g^v_x}{g^u_x}(x) \int_x^L \hat{f}^+(g^u(\xi))\, d\xi - \int_x^L \hat{f}^+(g^v(\xi))\, d\xi
= \tfrac{g^v_x}{g^u_x}(x) \int_x^L \check{u}(\xi,T)\, d\xi - \int_x^L \check{v}(\xi,T)\, d\xi
\end{aligned}
\]
and after differentiation
\begin{equation}\label{eqn:fhatplusVolterra}
\begin{aligned}
& \hat{f}^+(g^v(x))
-\tfrac{g^v_x}{g^u_x}(x) \hat{f}^+(g^u(x))
+(\tfrac{g^v_x}{g^u_x})_x(x)\int_x^L \hat{f}^+(g^u(\xi))\, d\xi
\\
&= \check{v}(x,T)
- \tfrac{g^v_x}{g^u_x}(x) \check{u}(x,T)
+(\tfrac{g^v_x}{g^u_x})_x(x)\int_x^L \check{u}(\xi,T)\, d\xi \,.
\end{aligned}
\end{equation}
Further differentiations of \eqref{eqn:fhatplusVolterra} yield the identities
\begin{equation}\label{eqn:fhatplusprime}
\begin{aligned}
& (\fhatplusp (g^v(x)) - \fhatplusp (g^u(x))) g^v_x(x)
-2(\tfrac{g^v_x}{g^u_x})_x(x) \hat{f}^+(g^u(x))
+(\tfrac{g^v_x}{g^u_x})_{xx}(x)\int_x^L \hat{f}^+(g^u(\xi))\, d\xi
\\
&= \check{v}_x(x,T)
- \tfrac{g^v_x}{g^u_x}(x) \check{u}_x(x,T)
- 2(\tfrac{g^v_x}{g^u_x})_x(x) \check{u}(x,T)
+(\tfrac{g^v_x}{g^u_x})_{xx}(x)\int_x^L \check{u}(\xi,T)\, d\xi \,.
\end{aligned}
\end{equation}
and
\begin{equation}\label{eqn:fhatplusprimeprime}
\begin{aligned}
(&\fhatpluspp(g^v(x))g^v_x(x) - \fhatpluspp(g^u(x))g^u_x(x)) g^v_x(x)\\
&\quad=-(\fhatplusp (g^v(x)) - \fhatplusp (g^u(x))) g^v_{xx}(x)
+2(\tfrac{g^v_x}{g^u_x})_x(x)g^u_x(x)\, \fhatplusp (g^u(x))\\
&\qquad+3(\tfrac{g^v_x}{g^u_x})_{xx}(x) \hat{f}^+(g^u(x))
-(\tfrac{g^v_x}{g^u_x})_{xxx}(x)\int_x^L \hat{f}^+(g^u(\xi))\, d\xi
\\
&\qquad+ \check{v}_{xx}(x,T)
- \tfrac{g^v_x}{g^u_x}(x) \check{u}_{xx}(x,T)
- 3(\tfrac{g^v_x}{g^u_x})_x(x) \check{u}_x(x,T)\\
&\qquad- 3(\tfrac{g^v_x}{g^u_x})_{xx}(x) \check{u}(x,T)
+(\tfrac{g^v_x}{g^u_x})_{xxx}(x)\int_x^L \check{u}(\xi,T)\, d\xi \\
&\quad =:\ \Phi(\fhatplusp ,{\hat{f}^+})(x)+ b(x)\,.
\end{aligned}
\end{equation}

To analyse this integral equation we assume that the slopes of $g^u$ and $g^v$ are sufficiently different in the sense that
\begin{equation}\label{eqn:kappa}
\left|\frac{g^v_x(x)}{g^u_x(x)}\right|\leq \kappa<1 \quad x\in\Omega
\end{equation}
and at the same time still ensuring that
\begin{equation}\label{eqn:delta}
g^v_x(x)\geq\delta  \quad x\in\Omega.
\end{equation}
Moreover, since the strategy is to control the full $C^2$ norm by bounding $\fhatpluspp$, we assume $f(g^u(0))$, $f(g^v(0))$, $f'(g^u(0))$, $f'(g^v(0))$ to be known so that
\begin{equation}\label{eqn:fhatplus0}
{\hat{f}^+}(g^u(0))={\hat{f}^+}(g^v(0))=\fhatplusp (g^u(0))=\fhatplusp (g^v(0))=0.
\end{equation}
As was found in
\cite{KaltenbacherRundell:2019c}, \cite{KaltenbacherRundell:2020b},
we have to impose range conditions
\begin{equation}\label{eqn:rangecondition_finaltime}
J=g^u(\Omega)\supseteq g^v(\Omega)\,, \quad
J\supseteq u_{ex}(\Omega\times(0,T))\,, \quad J\supseteq v_{ex}(\Omega\times(0,T))\,.
\end{equation}
Of course the roles of $g^u$ and $g^v$ could be reversed here. Note however, that the assumption of $g^u$ being the data with the larger range conforms with the condition \eqref{eqn:kappa} of $g^u$ being the function with the steeper slope.

\begin{lemma}\label{lem:fhat}
Let $\beta\in[0,1]$, $g^u,g^v\in C^4(\Omega)$, $\check{u}(T),\check{v}(T)\in C^{2,\beta}(\Omega)$, and assume that \eqref{eqn:mu}, \eqref{eqn:kappa}, \eqref{eqn:delta}, \eqref{eqn:fhatplus0} hold.
Then \eqref{eqn:fhatplusprimeprime} is uniquely solvable in
$C^{2,\beta}(J)$ and
\[
\|\fhatpluspp\|_{C^{0,\beta}(J)}
\leq C(g^u,g^v) \Bigl(\|\check{u}(T)\|_{C^{2,\beta}(\Omega)}+\|\check{v}(T)\|_{C^{2,\beta}(\Omega)}\Bigr)
\]
and therefore, by \eqref{eqn:fhatplus0},
\begin{equation}\label{eqn:estf}
\begin{aligned}
\|\hat{f}^+\|_{C^{2,\beta}(J)}\leq \tilde{C} \|\fhatpluspp\|_{C^{0,\beta}(J)}
&\leq \bar{C}(g^u,g^v) \Bigl(\|\check{u}(T)\|_{C^{2,\beta}(\Omega)}+\|\check{v}(T)\|_{C^{2,\beta}(\Omega)}\Bigr)\,,
\end{aligned}\end{equation}
where $\bar{C}(g^u,g^v)$ only depends on smoothness of $g^u,g^v$, as well as on $\kappa, \delta, \mu$.
\end{lemma}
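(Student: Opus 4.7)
The plan is to view \eqref{eqn:fhatplusprimeprime} as a linear operator equation on $J$ for the unknown $\varphi:=\fhatpluspp$, after the change of variables $y=g^u(x)$. By \eqref{eqn:mu} and the $C^4$ regularity of $g^u$, this is a diffeomorphism from $\Omega$ onto $J$; the range condition \eqref{eqn:rangecondition_finaltime} then makes $\psi(y):=g^v((g^u)^{-1}(y))$ a well-defined $C^3$ self-map of $J$ whose derivative satisfies $|\psi'(y)|=\left|\frac{g^v_x}{g^u_x}\right|((g^u)^{-1}(y))\leq\kappa<1$ by \eqref{eqn:kappa}, so $\psi$ is a strict contraction of $J$. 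Dividing \eqref{eqn:fhatplusprimeprime} by $g^u_x(x)g^v_x(x)$ (nonzero by \eqref{eqn:mu}, \eqref{eqn:delta}) and pulling back via $(g^u)^{-1}$, the principal part isolates as
\[
\varphi(y)=c(y)\,\varphi(\psi(y))+(\mathcal{L}\varphi)(y)+s(y),\qquad y\in J,
\]
where $c(y):=(g^v_x/g^u_x)\circ(g^u)^{-1}(y)$ has $|c|\leq\kappa$; the operator $\mathcal{L}\varphi$ collects the lower-order contributions of $\Phi$ (the pieces proportional to $\fhatplusp$, $\hat{f}^+$, and $\int_x^L\hat{f}^+(g^u)\,d\xi$), where $\fhatplusp$ and $\hat{f}^+$ are recovered from $\varphi$ by successive integration starting at $g^u(0)$ thanks to \eqref{eqn:fhatplus0}; and $s(y)$ is the data source built from $\check{u}(T),\check{v}(T)$ and their first two spatial derivatives.

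Next I would establish bounded invertibility of $I-\mathcal{K}-\mathcal{L}$, with $\mathcal{K}\varphi:=c\cdot(\varphi\circ\psi)$, in $C^{0,\beta}(J)$ by a two-step Neumann/Fredholm argument. The map $\mathcal{K}$ is a contraction on $C^0(J)$ with factor $\kappa$, and using $\|\psi'\|_{C^0}\leq\kappa$ one checks
\[
[\mathcal{K}\varphi]_{C^{0,\beta}(J)}\leq\kappa^{1+\beta}[\varphi]_{C^{0,\beta}(J)}+[c]_{C^{0,\beta}(J)}\,\|\varphi\|_{C^0(J)},
\]
so $(I-\mathcal{K})^{-1}$ exists and is bounded on $C^{0,\beta}(J)$ with constant depending only on $\kappa,\beta$ and the $C^1$ norm of $c$. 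The remainder $\mathcal{L}$ is built from one- and two-fold indefinite integrals of $\varphi$ against $C^1(J)$ weights coming from $c$, $c'$, $c''$, hence is compact on $C^{0,\beta}(J)$. Uniqueness for the homogeneous equation $\varphi=\mathcal{K}\varphi+\mathcal{L}\varphi$ follows by the same contraction iteration, using \eqref{eqn:fhatplus0} to annihilate the boundary traces in $\mathcal{L}$, and the Fredholm alternative then yields $\|\varphi\|_{C^{0,\beta}(J)}\leq C(g^u,g^v)\,\|s\|_{C^{0,\beta}(J)}$. Tracking the change of variables, $\|s\|_{C^{0,\beta}(J)}$ is bounded by $\|\check{u}(T)\|_{C^{2,\beta}(\Omega)}+\|\check{v}(T)\|_{C^{2,\beta}(\Omega)}$, and the final $C^{2,\beta}(J)$ estimate on $\hat{f}^+$ follows by integrating $\fhatpluspp$ twice from $g^u(0)$ using \eqref{eqn:fhatplus0}.

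The main obstacle is that $\mathcal{L}$ is not small: its coefficients involve $(g^v_x/g^u_x)_{xxx}$ and cannot be controlled by $\kappa$, so one cannot hope to close the argument by a naive single contraction of $\mathcal{K}+\mathcal{L}$. I therefore expect the decisive step to be the uniqueness claim for the homogeneous Volterra-type system, which must carefully exploit both the contraction of $\psi$ and the vanishing conditions \eqref{eqn:fhatplus0}; an alternative worth considering is the $\psi$-adapted weighted norm $\|\varphi\|_\ast:=\sup_{n\geq 0}\kappa^{-n}\|\varphi\circ\psi^n\|_{C^{0,\beta}}$, under which iterating the full equation turns $\mathcal{K}+\mathcal{L}$ into a genuine contraction and the bound is then a direct consequence of the Banach fixed-point theorem.
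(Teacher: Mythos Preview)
Your overall architecture matches the paper's: both divide \eqref{eqn:fhatplusprimeprime} by $g^u_x g^v_x$, isolate the principal part $\varphi(g^u(x))-\tfrac{g^v_x}{g^u_x}(x)\,\varphi(g^v(x))$ as a bounded isomorphism (your $(I-\mathcal{K})$, the paper's operator $B$), recognize the lower-order piece $\Phi$ as a compact operator after expressing $\fhatplusp$, $\hat f^+$ and the remaining integral through iterated antiderivatives of $\varphi$ via \eqref{eqn:fhatplus0}, and invoke the Fredholm alternative. Your $C^{0,\beta}$ bookkeeping for $\mathcal{K}$ is also essentially what the paper does in its last paragraph.

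The gap is in the uniqueness step for the homogeneous equation, which is exactly where the Fredholm argument needs input. Your sentence ``follows by the same contraction iteration'' cannot work as stated, and you yourself note that $\mathcal{L}$ is not small; the weighted norm $\sup_n\kappa^{-n}\|\varphi\circ\psi^n\|$ does not help either, because $\mathcal{L}$ integrates $\varphi$ over intervals that are not controlled by the $\psi$-orbit, so iterating the composition does not shrink those terms. What the paper actually does is different: after inserting the integral representations of $\fhatplusp$, $\hat f^+$, $\int_x^L\hat f^+(g^u)$ into the homogeneous version of \eqref{eqn:fhatplusprimeprime} and changing to the variable $\zeta=g^u(x)$, the equation takes the form
\[
0=j(\zeta)-\mathrm{a}(\zeta)\,j(\mathrm{b}(\zeta))+\int_{\underline g}^{\zeta}\widetilde{\mathrm{k}}(\zeta,z)\,j(z)\,dz,
\]
i.e.\ a second-kind \emph{Volterra} equation with a functional-delay term whose coefficient satisfies $0\le\mathrm{a}\le\kappa<1$. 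Uniqueness then follows from Gronwall combined with the $\kappa$-bound on the delay term. The point you are missing is precisely this Volterra structure: once the antiderivative substitutions are made, every integral of $j$ runs only up to $\zeta$ (or up to $\mathrm{b}(\zeta)\le\zeta$), so a standard Gronwall argument closes. Without identifying this, the Fredholm alternative remains conditional and the proof is incomplete.
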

\begin{proof}
By \eqref{eqn:kappa}, \eqref{eqn:delta} we get, after division by $-g^u_x(x) g^v_x(x)$,
\[
\hat{f}^{+\,\prime\prime}(g^u(x))-\hat{f}^{+\,\prime\prime}(g^v(x))\tfrac{g^v_x}{g^u_x}(x)
= -\frac{1}{g^v_x(x)g^u_x(x)} \Bigl(\Phi(\hat{f}^{+\,\prime},{\hat{f}^+})(x)+b(x)\Bigr)\,.
\]
Hence, taking the supremum over $x\in\Omega$ and using \eqref{eqn:rangecondition_finaltime}
we obtain, using \eqref{eqn:mu}, \eqref{eqn:kappa}, \eqref{eqn:delta},
\[
\|\hat{f}^{+\,\prime\prime}\|_{C(J)}\leq \kappa \|\hat{f}^{+\,\prime\prime}\|_{C(J)} + \frac{1}{\mu\delta}
\|\Phi(\hat{f}^{+\,\prime},{\hat{f}^+})+b\|_{C(\Omega)}
\]
that is,
\[
\|\hat{f}^{+\,\prime\prime}\|_{C(J)}\leq \frac{1}{\mu\delta(1-\kappa)}
\|\Phi(\hat{f}^{+\,\prime},{\hat{f}^+})\|_{C(\Omega)}\,.
\]
By \eqref{eqn:fhatplus0} we can write
\[
\begin{aligned}
\hat{f}^{+\,\prime}(g^u(x))&=\int_0^x \hat{f}^{+\,\prime\prime}(g^u(\xi))g^u_x(\xi)\, d\xi\,, \\
{\hat{f}^+}(g^u(x))
&=\int_0^x \int_0^\xi \hat{f}^{+\,\prime\prime}(g^u(\sigma))g^u_x(\sigma)\, d\sigma g^u_x(\xi)\, d\xi \,,\\
\int_x^L {\hat{f}^+}(g^u(\xi)) \, d\xi
&= \int_x^L \int_0^\xi \int_0^\sigma \hat{f}^{+\,\prime\prime}(g^u(\tau))g^u_x(\tau)\, d\tau g^u_x(\sigma)\, d\sigma \, d\xi\,,
\end{aligned}
\]
hence
\[
\Phi(\hat{f}^{+\,\prime},{\hat{f}^+}) = K \hat{f}^{+\,\prime\prime}
\]
with a compact operator $K:C(J)\to C(\Omega)$.
On the other hand, with the isomorphism  $B:C(J)\to C(\Omega)$ defined by
\[
(Bj)(x):= (j(g^v(x))g^v_x(x)-j(g^u(x))g^u_x(x))g^v_x(x)
= -\Bigl(j(g^u(x))-j(g^v(x))\tfrac{g^v_x(x)}{g^u_x(x)}\Bigl)g^u_x(x) g^v_x(x)
\]
(and noting that $\,\|B^{-1}\|_{C(\Omega)\to C(J)}\leq \frac{1}{\mu\delta(1-\kappa)}$, see above),
we can write the problem of recovering $\hat{f}^{+\,\prime\prime}$ as a second kind Fredholm equation
\[
j - B^{-1}K j = B^{-1}b
\]
for $j=\hat{f}^{+\,\prime\prime}$ and apply the Fredholm alternative in $C(J)$. To this end, we have to prove that the kernel of $I-B^{-1}K$ is trivial. For this purpose we have to be able to conclude $j\equiv0$ from
$J\equiv0$ where
\[
\begin{aligned}
&J(x)=: j\bigl((g^v(x))g^v_x(x) -(g^u(x))g^u_x(x)\bigr) g^v_x(x)\\
&\quad+\int_0^x\! j(g^v(\xi))g^v_x(\xi)\, d\xi -
g^v_{xx}(x)\int_0^x\! j(g^u(\xi))g^u_x(\xi)\, d\xi
\\
&\quad-2(\tfrac{g^v_x}{g^u_x})_x(x) g^u_x(x) \int_0^x\!j(g^u(\xi))g^u_x(\xi)\, d\xi -3(\tfrac{g^v_x}{g^u_x})_{xx}(x)
\int_0^x \!\int_0^\xi\!j(g^u(\sigma))g^u_x(\sigma)\, d\sigma g^u_x(\xi)\, d\xi
\\
&\quad+(\tfrac{g^v_x}{g^u_x})_{xxx}(x)
\int_x^L\!\int_0^\xi\!\int_0^\sigma\!j(g^u(\tau))g^u_x(\tau)\, d\tau g^u_x(\sigma)\, d\sigma \, d\xi.
\end{aligned}
\]
We then have
\[
\begin{aligned}
J(x) =& j\bigl(g^v(x))g^v_x(x) - (g^u(x))g^u_x(x)\bigr) g^v_x(x)
+\int_{g^v(0)}^{g^v(x)}\!j(z)\, dz\\
&\quad- (g^v_{xx}(x)+2(\tfrac{g^v_x}{g^u_x})_x(x))
\int_{g^u(0)}^{g^u(x)}\!j(z)\, dz
 -3(\tfrac{g^v_x}{g^u_x})_{xx}(x)
\int_{g^u(0)}^{g^u(x)}\!\int_{g^u(0)}^{z} j(y)\, dy \, dz
\\
&\quad+(\tfrac{g^v_x}{g^u_x})_{xxx}(x)
\int_x^L
\int_{g^u(0)}^{g^u(\xi)}\!\int_{g^u(0)}^{z} j(y)\, dy \, dz \, d\xi \,.
\end{aligned}
\]
After division by $-(g^u_x\cdot g^v_x)(x)$ and the change of variables $\zeta=g^u(x)$, this is equivalent to
\[
\begin{aligned}
0=& j(\zeta)-\mathrm{a}(\zeta)\,j(\mathrm{b}(\zeta))
+\int_{\underline{g}}^\zeta \mathrm{k} (\zeta,z) j(z)\, dz\\
&+\mathrm{c}(\zeta)\int_{g^u(0)}^\zeta \int_{g^u(0)}^{z} j(y)\, dy \, dz
+\mathrm{d}(\zeta)\int_\zeta^{g^u(L)}\!\frac{1}{g^u_x((g^u)^{-1}(r)}\int_{g^u(0)}^\zeta \int_{g^u(0)}^{z} j(y)\, dy \, dz\, dr
\end{aligned}
\]
with
\[
\begin{aligned}
&\mathrm{a}(g^u(x))=(\tfrac{g^v_x}{g^u_x})(x), \qquad\mathrm{b}(g^u(x))=g^v(x), \\
&\mathrm{k}(\zeta,z)=
\frac{g^v_{xx}+2(\tfrac{g^v_x}{g^u_x})_x}{(g^u_x\cdot g^v_x)}((g^u)^{-1}(\zeta))
-\frac{1}{(g^u_x\cdot g^v_x)}((g^u)^{-1}(\zeta)) 1\!\mathrm{I}_{[g^v(0),\mathrm{b}(\zeta)]}(z),
\end{aligned}
\]
that is, to an integral equation of the form
\[
0= j(\zeta)-\mathrm{a}(\zeta)\,j(\mathrm{b}(\zeta))
+\int_{\underline{g}}^\zeta \widetilde{\mathrm{k}} (\zeta,z) j(z)\, dz
\]
for all $\zeta\in [g^u(0),g^u(L)]$.
Since this is a homogeneous second kind Volterra integral equation, using Gronwall's inequality and the fact that $0\leq\mathrm{a}\leq\kappa<1$, it follows that $j\equiv0$.

Thus we have
\[
\|\hat{f}^{+\,\prime\prime}\|_{C(J)}\leq C \|b\|_{C(\Omega)}\,.
\]
Moreover we can replace $C(J)$ by $C^\beta(J)$ since the operator $B$ defined above is an isomorphism also between $C^\beta(J)$ and $C^\beta(\Omega)$ as $Bj=b$ implies
\[
\begin{aligned}
&\frac{|j(g^u(x))-j(g^u(y))|}{|g^u(x)-g^u(y)|^\beta} \\
&= g^v_x(x))g^u_x(x)^{-1}\frac{|j(g^v(x))-j(g^v(y))|}{|g^u(x)-g^u(y)|^\beta}
+\frac{g^v_x(x))g^u_x(x)^{-1}-g^v_x(y))g^u_x(y)^{-1}}{|g^u(x)-g^u(y)|^\beta} j(g^v(y))\\
&\quad-\Bigl(\frac{(g^u_x(x))g^v_x(x)^2)^{-1}b(x)-(g^u_x(y))g^v_x(y)^2)^{-1}b(y)}{|g^u(x)-g^u(y)|^\beta}\Bigr)
\end{aligned}
\]
hence, by taking the supremum over $x\in\Omega$ and using the range condition as well as the fact that by \eqref{eqn:kappa}, $|g^u(x)-g^u(y)|^\beta\geq |g^v(x)-g^v(y)|^\beta$,
\[
\begin{aligned}
|j(g^u(x))&-j(g^u(y))|_{C^{0,\beta}(\Omega)}\\
&\leq \frac{1}{1-\kappa} \Bigl( C_1 \|j\|_{C(J)} + \|b\|_{C^{0,\beta}(\Omega)}\Bigr)
\leq \frac{1}{1-\kappa} \Bigl( \frac{C_1}{\mu\delta(1-\kappa)} \|b\|_{C(\Omega)} + \|b\|_{C^{0,\beta}(\Omega)}\Bigr)
\end{aligned}
\]
under the smoothness assumptions made on $g^u$, $g^v$.
\end{proof}
\begin{remark}
Here in case $\beta=0$, $C^{0,0}(\Omega)$ could as well be replaced by $L^\infty(\Omega)$, so we have the choice of spaces
\[
X_f= C^{2,\beta}(J) \mbox{ for some }\beta\in[0,1]\mbox{ or }X_f=W^{2,\infty}(J)
\]
available.
\end{remark}

Now we bound $\hat{a}^+$ in
\begin{equation}\label{eqn:diffu1}
(\hat{a}^+ g^u_x)_x(x)+\hat{f}^+(g^u(x))=\check{u}(x,T) \quad x\in\Omega
\end{equation}
(cf. \eqref{eqn:diffu}), in terms of $\check{u}$, $\check{v}$ and $\hat{f}^+$ where the latter is then bounded by means of \eqref{eqn:estf}.
Here we first of all try to choose the space for $\hat{a}^+$ analogously to \cite{KaltenbacherRundell:2020a} as
\[
X_a = \{d\in H^1(\Omega)\cap L^\infty(\Omega)\,:\, d(L)=0\} \,,
\]
where we have assumed to know $a$ at one of the boundary points (we take the right hand one again) for otherwise \eqref{eqn:diffu1} would not uniquely determine $\hat{a}^+$.
The estimate can simply be carried out as follows.
After integration with respect to space and dividing by $g^u_x$ we get
\begin{equation}\label{eqn:idahatplus}
\begin{aligned}
\hat{a}^+(x)&=\frac{1}{g^u_x(x)}\int_x^L (\check{u}(\xi,T)-\hat{f}^+(g^u(\xi)))\, d\xi\\
\hat{a}^+_x(x)&=
-\frac{g^u_{xx}(x)}{(g^u_x(x))^2}\int_x^L (\check{u}(\xi,T)-\hat{f}^+(g^u(\xi)))\, d\xi
 -\frac{1}{g^u_x(x)}(\check{u}(x,T)-\hat{f}^+(g^u(x)))
\end{aligned}
\end{equation}
and therefore
\[
\begin{aligned}
\|\hat{a}^+\|_{L^\infty(\Omega)}&\leq\frac{1}{\mu}\bigl(\|\check{u}(\cdot,T)\|_{L^1(\Omega)}
+L\|\hat{f}^+\|_{L^\infty(J)}\bigr)\\
\|\hat{a}^+_x\|_{L^2(\Omega)}&\leq
\frac{\sqrt{L}\|g^u_{xx}\|_{L^2(\Omega)}}{\mu^2}\bigl(\|\check{u}(\cdot,T)\|_{L^1(\Omega)}
+L\|\hat{f}^+\|_{L^\infty(J)}\bigr)\\
&\quad +\frac{1}{\mu}\bigl(\|\check{u}(\cdot,T)|_{L^2(\Omega)}+\sqrt{L}\|\hat{f}^+\|_{L^\infty(J)}\bigr)
\end{aligned}
\]
This gives an obvious mismatch with the much higher norm of $\hat{f}$ (and consequently of $\check{u}$, $\check{v}$) in the estimate \eqref{eqn:estf} which from \cite{KaltenbacherRundell:2019c} and \cite{KaltenbacherRundell:2020b} we know to be needed, though.
Also, Lemma \ref{lem:fhat} would not work when applied to the first in place of the second derivative of $\hat{f}^+$, as both $\fhatplusp $ terms in the left hand side of \eqref{eqn:fhatplusprime} have the same factor.

Thus we next try a space for $a^+$ that is more aligned to estimate \eqref{eqn:estf}.
From \eqref{eqn:idahatplus} we get, using the fact that $C^{2,\beta}(\Omega)$ is a Banach algebra and $\|j(g)\|_{C^{2,\beta}(\Omega))}\leq \|j\|_{C^{2,\beta}(J))}\|g\|_{C^{2,1}(\Omega))}$,
\begin{equation}\label{eqn:esta_withf}
\begin{aligned}
&\|\hat{a}^+_x\|_{C^{2,\beta}(\Omega)}\\
&\leq
\Bigl(L\|\bigl(\tfrac{1}{g^u_x}\bigr)_x\|_{C^{2,\beta}(\Omega)}+ \|\tfrac{1}{g^u_x}\|_{C^{2,\beta}(\Omega)}\Bigr) \bigl(\|\check{u}(T)\|_{C^{2,\beta}(\Omega)}+\|g^u\|_{C^{2,1}(\Omega))} \|\hat{f}^+\|_{C^{2,\beta}(J))}\bigr)
\end{aligned}
\end{equation}
and thus, together with \eqref{eqn:estf},
\begin{equation}\label{eqn:esta}
\begin{aligned}
\|\hat{a}^+\|_{C^{3,\beta}(\Omega)}\leq \bar{C}(g^u,g^v) \Bigl(\|\check{u}(T)\|_{C^{2,\beta}(\Omega)}+\|\check{v}(T)\|_{C^{2,\beta}(\Omega)}\Bigr)\,,
\end{aligned}
\end{equation}
where again $\bar{C}(g^u,g^v)$ only depends on the smoothness of $g^u,g^v$, as well as on and $\kappa, \delta, \mu$.
This completes step (a).

\begin{proposition} \label{prop:M}
For $g^u\in C^{4,\beta}(\Omega)$, $g^v\in C^4(\Omega)$ satisfying \eqref{eqn:mu}, \eqref{eqn:kappa}, \eqref{eqn:delta}, \eqref{eqn:rangecondition_finaltime}, there exists a constant $\bar{C}(g^u, g^v)$ depending only on
$\|g^u\|_{C^{4,\beta}(\Omega)}$, $\|g^v\|_{C^4(\Omega)}$, $\mu$, $\kappa$, $\delta$ such that the operator $\mathbb{M}:X_a\times X_f\to C^{2,\beta}(\Omega)$ as defined in \eqref{eqn:MFb} with
\begin{equation}\label{eqn:XaXf}
\begin{aligned}
&X_a=\{d\in C^{3,\beta}(\Omega)\, : \, d(L)=0\}\,,\\
&X_f=\{j\in C^{2,\beta}(J)\, : \, j(g^u(0))=j(g^v(0))=j'(g^u(0))=j'(g^v(0))=0\}
\end{aligned}
\end{equation}
is bounded and invertible with
\[
\|\mathbb{M}^{-1}\|_{C^{2,\beta}(\Omega)\to X_a\times X_f}\leq \bar{C}(g^u, g^v)\,.
\]
\end{proposition}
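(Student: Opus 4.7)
The plan is to collate the two estimates already established earlier in this subsection. Lemma \oldref{lem:fhat} bounds $\hat{f}^+$ in $C^{2,\beta}(J)$ by the $C^{2,\beta}(\Omega)$-norms of $\check{u}(T)$ and $\check{v}(T)$, and the subsequent estimate \eqref{eqn:esta} controls $\hat{a}^+$ in $C^{3,\beta}(\Omega)$ by those same two data norms together with $\hat{f}^+$. Once these are combined, only the routine boundedness of the forward map $\mathbb{M}$ remains to be checked. (I read the codomain $C^{2,\beta}(\Omega)$ in the statement as shorthand for $(C^{2,\beta}(\Omega))^2$, consistent with the two-component form of $\mathbb{M}$ in \eqref{eqn:MFb}.)

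First I would verify boundedness of $\mathbb{M}: X_a\times X_f \to (C^{2,\beta}(\Omega))^2$. Expanding $(a^+ g^u_x)_x = a^+_x g^u_x + a^+ g^u_{xx}$, both summands lie in $C^{2,\beta}(\Omega)$ by the Banach algebra property, using $a^+ \in C^{3,\beta}(\Omega)$ and $g^u \in C^{4,\beta}(\Omega)$. The composition $f^+(g^u)$ belongs to $C^{2,\beta}(\Omega)$ by the same chain-rule/composition estimate invoked just after \eqref{eqn:esta_withf}, with a constant depending polynomially on $\|g^u\|_{C^{2,1}(\Omega)}$. The analogous bounds for the $g^v$-component require only $g^v \in C^4(\Omega)$, since no additional Hölder exponent is imposed there.

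Next I would establish invertibility by running the derivation preceding Lemma \oldref{lem:fhat} in reverse. Given an arbitrary right-hand side $(\check{u}(T),\check{v}(T))\in (C^{2,\beta}(\Omega))^2$, I first solve the second-kind Fredholm equation for $(\hat{f}^+)''$ in $C^{0,\beta}(J)$; the Fredholm alternative applies because Lemma \oldref{lem:fhat} already established triviality of the kernel via Gronwall. Integrating twice from $g^u(0)$ recovers $\hat{f}^+ \in X_f$, with the four vanishing conditions \eqref{eqn:fhatplus0} providing the integration constants and the range condition \eqref{eqn:rangecondition_finaltime} ensuring that $\hat{f}^+$ is defined on all of $J$. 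Formula \eqref{eqn:idahatplus} then produces $\hat{a}^+ \in X_a$, with $\hat{a}^+(L)=0$ built into the upper integration limit. Checking $\mathbb{M}(\hat{a}^+,\hat{f}^+)=(\check{u}(T),\check{v}(T))$ amounts to retracing the reductions \eqref{eqn:diffu}--\eqref{eqn:fhatplusprimeprime}. Uniqueness is the same kernel-triviality statement combined with the observation that $\hat{f}^+\equiv 0$ forces $\hat{a}^+\equiv 0$ through \eqref{eqn:idahatplus}.

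The operator-norm bound $\|\mathbb{M}^{-1}\|\leq \bar{C}(g^u,g^v)$ then follows by chaining \eqref{eqn:estf} and \eqref{eqn:esta}, both of which control their respective unknowns by $\bar{C}(g^u,g^v)\bigl(\|\check{u}(T)\|_{C^{2,\beta}(\Omega)}+\|\check{v}(T)\|_{C^{2,\beta}(\Omega)}\bigr)$ with constants depending only on $\|g^u\|_{C^{4,\beta}(\Omega)}$, $\|g^v\|_{C^4(\Omega)}$, $\mu$, $\kappa$, $\delta$. The main conceptual obstacle — triviality of the kernel of $I-B^{-1}K$ under the separation condition $|g^v_x/g^u_x|\leq\kappa<1$ — has already been discharged inside Lemma \oldref{lem:fhat}. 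What remains here is essentially bookkeeping: verifying that the constructed preimage actually lies in $X_a\times X_f$ (all boundary/pointwise conditions holding by construction) and that the products and compositions entering $\mathbb{M}$ respect the $C^{2,\beta}$ scale.
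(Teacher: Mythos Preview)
Your proposal is correct and follows essentially the same approach as the paper: the Proposition is stated immediately after the line ``This completes step (a)'' and is meant as a summary of the preceding analysis, namely Lemma~\oldref{lem:fhat} giving \eqref{eqn:estf} and the subsequent derivation of \eqref{eqn:esta}. You are somewhat more explicit than the paper in separately checking forward boundedness of $\mathbb{M}$ and in verifying that the constructed preimage actually lands in $X_a\times X_f$, but these are routine additions rather than a different route.
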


\begin{remark}
This part of the convergence proof would directly carry over to the case of a fractional
time derivative $D_t^\alpha$ in place of $D_t$ in
\eqref{eqn:u-f}, \eqref{eqn:v-f}.
However, further bounding $\bar{u}=D^\alpha u$ using the fact that it satisfies
the {\sc pde}
$D_t^\alpha \bar{u}-(a \bar{u}_x)_x=D_t^\alpha f(u)+ D_t^\alpha r^u =D_t^\alpha (\frac{f(u)}{u} \cdot u )+ D_t^\alpha r^u$ becomes problematic due to the lack of a chain or product rule for the fractional time derivative.
\end{remark}

\begin{remark}
Extension to higher space dimensions does not appear to be possible, since the strategy of eliminating $\hat{a}^+$ relies on one dimensional integration.
\end{remark}

\begin{remark}
Besides some assumptions directly on the the searched for coefficients $a$ and $f$ (see \eqref{eqn:XaXf}), the proof of Proposition \eqref{prop:M} shows that we also need to impose some conditions on the data $g^u$, $g^v$. The monotonicity assumption \eqref{eqn:mu} allows for inversion of $g^u$ in order to recover values of $f$ from values of $f(g^u)$, avoiding potentially contradictory double or multiple assignments; likewise for \eqref{eqn:delta} and $g^v$.
Also, it is clear that the range of the data must cover all values that will actually appear as arguments in $f$; note that the range condition \eqref{eqn:rangecondition_finaltime} is only imposed on the exact states. The range of the data is therefore the natural maximal domain for any reasonable reconstruction of $f$ and evaluation of $f$ outside this domain must be avoided since it could lead to false results in a forward simulation.
Finally, \eqref{eqn:kappa} is a condition on sufficient deviation of slopes between $g^u$ and $g^v$, in order to allow for unique and stable recovery of $f^+$ according to Lemma \eqref{lem:fhat}. It is probably the most technical and most difficult to realize of these condition. Still note that it is compatible with \eqref{eqn:rangecondition_finaltime} in the sense that the steeper function $g^u$ is the one with larger range. Note that no such condition as \eqref{eqn:kappa} will be needed in the analysis of the final time - time trace observation setting of the next section.
\\
All these conditions can in principle be achieved by appropriate excitations $r^u$, $r^v$ and their validity can be checked during the reconstruction process. Maximum and comparison principles can provide a good intuition for this choice. However, the concrete design of excitations to enable / optimize reconstruction is certainly a topic on its own.
\end{remark}

\bigskip

\paragraph{Step (b), smallness of
$\mathbb{F}(a,f)-\mathbb{F}(a_{ex},f_{ex})$}:\\
The differences $\hat{u}$, $\hat{v}$ along with $\bar{u}=u_t$, $\bar{v}=v_t$ solve
\begin{eqnarray}
&&D_t\hat{u}-(a_{ex} \hat{u}_x)_x+q^u\,\hat{u} = (\hat{a} u_x)_x +\hat{f}(u) \quad t\in(0,T)\,, \quad \hat{u}(0)=0 \label{eqn:uhat}\\
&&D_t\hat{v}-(a_{ex} \hat{v}_x)_x+q^v\,\hat{v} = (\hat{a} v_x)_x +\hat{f}(v) \quad t\in(0,T)\,, \quad \hat{v}(0)=0 \label{eqn:vhat}
\end{eqnarray}
with
\[
\begin{aligned}
q^u&=-\frac{f_{ex}(u)-f_{ex}(u_{ex})}{u-u_{ex}}=-\int_0^1f_{ex}'(u_{ex}+\theta\hat{u})\, d\theta
\,, \quad
q^v&=-\frac{f_{ex}(v)-f_{ex}(v_{ex})}{v-v_{ex}}
\end{aligned}
\]
and
\begin{eqnarray*}
&&\bar{u}_t-(a \bar{u}_x)_x-f'(u)\bar{u}=D_tr^u \quad t\in(0,T)\,, \quad
\bar{u}(0)=(a u_{0x})_x+f(u_0)+r^u(0)\\
&&\bar{v}_t-(a \bar{v}_x)_x-f'(v)\bar{v}=D_tr^v \quad t\in(0,T)\,,  \quad
\bar{v}(0)=(a v_{0x})_x+f(v_0)+r^v(0)\,.
\end{eqnarray*}
Moreover we can write $\check{u}=D_t\hat{u}$, $\check{v}=D_t\hat{v}$ as solutions to the {\sc pde}s
\begin{eqnarray}
&&\check{u}_t-(a_{ex} \check{u}_x)_x-f_{ex}'(u_{ex})\,\check{u} = (\hat{a} \bar{u}_x)_x
+\Bigl(\hat{f}'(u) + \int_0^1f_{ex}''(u_{ex}+\theta\hat{u})\, d\theta\hat{u}\Bigr) \bar{u}
\,, \label{equ:ucheck}\\
&&\check{v}_t-(a_{ex} \check{v}_x)_x-f_{ex}'(v_{ex})\,\check{v} = (\hat{a} \bar{v}_x)_x
+\Bigl(\hat{f}'(v) + \int_0^1f_{ex}''(v_{ex}+\theta\hat{v})\, d\theta\hat{v}\Bigr) \bar{v}
\label{equ:vcheck}
\end{eqnarray}
with initial conditions
\begin{equation}\label{eqn:initucheckvcheck}
\check{u}(0)= (\hat{a} u_{0x})_x +\hat{f}(u_0)\,, \qquad
\check{v}(0)= (\hat{a} v_{0x})_x +\hat{f}(v_0)\,,
\end{equation}
where we have used the identity $f'(u)D_tu-f_{ex}'(u_{ex})D_tu_{ex}$\\ $=f_{ex}'(u_{ex})D_t(u-u_{ex})
+ \Bigl((f-f_{ex})'(u)+(f_{ex}'(u)-f_{ex}'(u_{ex}))\Bigr)D_tu$.

We can estimate $\check{u}$, $\check{v}$ from \eqref{equ:ucheck}, \eqref{equ:vcheck}, \eqref{eqn:initucheckvcheck} in the same way as we estimate $z$ in the estimate preceding Theorem 3.4 of \cite{KaltenbacherRundell:2020b},
taking into account the additional term $(\hat{a} \bar{u}_x)_x$ in the right hand side and $(\hat{a} u_{0x})_x$ in the initial condition, which yields the following.

We split the solution of \eqref{equ:ucheck} $\check{u}=\check{u}^r+\check{u}^0$ into a part $\check{u}^r$ satisfying the inhomogeneous {\sc pde}
\[
\check{u}^r_t-(a_{ex} \check{u}^r_x)_x - {f_{ex}}'(u_{ex})\, \check{u}^r = (\hat{a} \bar{u}_x)_x
+\Bigl(\hat{f}'(u) + \int_0^1f_{ex}''(u_{ex}+\theta\hat{u})\, d\theta\hat{u}\Bigr) \bar{u}
\]
with homogeneous initial conditions $\check{u}^r(x,0)=0$
and a part $\check{u}^0$ satisfying the homogeneous {\sc pde}
$\check{u}^0_t-(a_{ex} \check{u}^0_x)_x - {f_{ex}}'(u_{ex}) \check{u}^0 = 0$ with inhomogeneous initial conditions
\begin{equation}\label{eqn:checku0_init}
\check{u}^0 (x,0)=(\hat{a} u_{0x})_x(x) +\hat{f}(u_0(x)) \quad  x\in\Omega\,.
\end{equation}

Using a classical estimate from the book by Friedman \cite[Theorem 6, page 65]{Friedman:1964}, and abbreviating $\Theta_t=(0,t)\times\Omega$, $\Theta=(0,T)\times\Omega$,
we estimate
\begin{equation}\label{eqn:estzr0}
\begin{aligned}
&\|\check{u}^r\|_{C([0,t];C^{2,\beta}(\Omega))}
\leq
\sum_{|m|\leq2}\|D^m_x \check{u}^r\|_{C^{0,\beta}(\Theta_t)}\\
&\leq K\Bigl(\|(\hat{a} \bar{u}_x)_x\|_{C^{0,\beta}(\Theta_t)}+
\|\int_0^1 {f_{ex}}''(u_{ex}+\theta\hat{u})\, d\theta \, \hat{u} + \hat{f}'(u)\|_{C^{0,\beta}(\Theta)} \|\bar{u}\|_{C^{0,\beta}(\Theta_t)}\Bigr) \\
&\leq K\Bigl(\|\hat{a}\|_{C^{2,\beta}(\Theta_t)}+
(\|{f_{ex}}''\|_{C^{0,\beta}(J)}\|\hat{u}\|_{C^{0,\beta}(\Theta_t)}+\|\hat{f}'\|_{C^{0,\beta}(J)})
(*)
\Bigr)
\|\bar{u}\|_{C^{2,\beta}(\Theta_t)}\,,
\end{aligned}
\end{equation}
where
\[
\begin{aligned}
&(*)=(1+\|u_{ex}\|_{C^{0,1}(\Theta_t)}+\|\hat{u}\|_{C^{0,1}(\Theta_t)})\\
&\|\bar{u}\|_{C^{2,\beta}(\Theta_t)}\leq(\|u_{ex,t}\|_{C^{2,\beta}(\Theta_t)}+\|\check{u}\|_{C^{2,\beta}(\Theta_t)})
\end{aligned}
\]
and
\[
\|\check{u}\|_{C^{2,\beta}(\Theta_t)}
\leq \|\check{u}^r\|_{C^{2,\beta}(\Theta_t)}+\|\check{u}^0\|_{C^{2,\beta}(\Theta_t)}
\leq \sum_{|m|\leq2}\|D^m_x \check{u}^r\|_{C^{0,\beta}(\Theta_t)}
+\|\check{u}^0\|_{C^{2,\beta}(\Theta_t)}\,.\\
\]
Note that we actually estimate $\|\check{u}(T)\|_{C^{2,\beta}(\Omega)}$ by $\|\check{u}\|_{C^{2,\beta}(\Omega\times(0,T))}:=\sum_{m\leq2}
\|D_x^k\check{u}\|_{C^{\beta}(\Omega\times(0,T))}$
with the definition of $\|\cdot\|_{C^{\beta}(\Omega\times(0,T))}$ from \cite{Friedman:1964} (note that $\|\cdot\|_{C^{2,\beta}(\Omega\times(0,T))}$ has a different meaning in \cite{Friedman:1964}).

Here, $\hat{u}$ solves \eqref{eqn:uhat}, so applying again \cite[Theorem 6, page 65]{Friedman:1964} we obtain
\[
\begin{aligned}
\|\hat{u}\|_{C^{2,\beta}(\Theta_t)}&\leq
\sum_{|m|\leq2}\|D^m_x \hat{u}\|_{C^{0,\beta}(\Theta_t)}
\leq K \|(\hat{a}u_x)_x+\hat{f}(u)\|_{C^{0,\beta}(\Theta_t)} \\
&\leq
K \Bigl(\|\hat{a}\|_{C^{2,\beta}(\Theta_t)}\|\hat{u}\|_{C^{2,\beta}(\Theta_t)}+\|\hat{f}\|_{C^{0,\beta}(J)} (1+\|u_{ex}\|_{C^{0,1}(\Theta_t)}+\|\hat{u}\|_{C^{0,1}(\Theta_t)})\Bigr)\\
&\leq
K \Bigl(\|\hat{a}\|_{C^{2,\beta}(\Theta_t)}+\|\hat{f}\|_{C^{0,\beta}(J)}\Bigr) (1+\|u_{ex}\|_{C^{0,1}(\Theta_t)}+\|\hat{u}\|_{C^{2,\beta}(\Theta_t)})\Bigr),
\end{aligned}
\]
thus, for $\|\hat{a}\|_{C^{2,\beta}(\Theta_t)}+\|\hat{f}\|_{C^{0,\beta_0}(J)}\leq\rho_0<\frac{1}{K}$,
\[
\|\hat{u}\|_{C^{0,1}(\Theta_t)}
\leq \|\hat{u}\|_{C^{2,\beta}(\Theta_t)}
\leq \frac{1}{1-\rho_0 K}
\Bigl(\|\hat{a}\|_{C^{2,\beta}(\Theta_t)}+\|\hat{f}\|_{C^{0,\beta}(J)}\Bigr)
(1+\|u_{ex}\|_{C^{0,1}(\Theta_t)})\,.
\]

Altogether, for $\|\hat{a}\|_{C^{2,\beta}(\Theta_t)}+\|\hat{f}'\|_{C^{0,\beta}(J)}\leq\rho_1$ with $\rho_1<\frac{1}{K}$, we end up with an estimate for $\check{u}^r$ of the form
\begin{equation}\label{eqn:estucheckr}
\begin{aligned}
\|\check{u}^r\|_{C([0,t];C^{2,\beta}(\Omega))}
\leq &C(K,\rho_0,\rho_1,\|{f_{ex}}''\|_{C^{0,\beta}(J)}) \Bigl(
\|\hat{a}\|_{C^{2,\beta}(\Omega)}
+\|\hat{f}\|_{C^{1,\beta}(J)}(1+\|u_{ex}\|_{C^{0,1}(\Theta_t)}\Bigr)\\
&\quad\times(\|u_{ex,t}\|_{C^{2,\beta}(\Theta_t)}+\|\check{u}^0\|_{C^{2,\beta}(\Theta_t)})\,.
\end{aligned}
\end{equation}

To estimate $\check{u}^0$, we do not use the same result from \cite[Theorem 6, page 65]{Friedman:1964}, since this would not give a contraction estimate with respect to $\hat{a}$, $\hat{f}$. Rather we attempt to employ dissipativity of the equation, however, this fails like in \cite{KaltenbacherRundell:2020b}, as we will now illustrate.
Let us first of all point out that the difficulty here lies in the fact that the coefficient $f_{ex}'(u_{ex})$ is time-dependent and thus the abstract ODE corresponding to \eqref{equ:ucheck} is non-autonomous.
Thus semigroup decay estimates would require the zero order coefficient to be
constant in time or at least time periodic,
see, e.g., \cite[Chapter 6]{Lunardi:1995}. To illustrate why indeed the general
form of $f_{ex}'(u_{ex})$ most probably prohibits decay of solutions, we follow
a perturbation approach, with a time constant potential
$q\approx -f_{ex}'(u_{ex})$, which we assume to be positive.
Using the series expansion of $\check{u}^0$ in terms of the eigenvalues
and -functions $(\lambda_n,\phi_n)_{n\in\mathbb{N}}$ of the elliptic operator
$\mathbb{L}$ defined by
$\mathbb{L}v=-(a_{ex}v_x)_x+q\,v$ as well as the induced Hilbert spaces
\[
\dot{H}^\sigma(\Omega)=\{v\in L^2(\Omega)\ : \ \sum_{n=1}^\infty \lambda_n^{\sigma/2} \langle v,\phi_n\rangle \phi_n \, \in L^2(\Omega)\}
\]
where $\langle\cdot,\cdot\rangle$ denotes the $L^2$ inner product on $\Omega$, with the norm
$\displaystyle{
\|v\|_{\dot{H}^\sigma(\Omega)}
= \Bigl(\sum_{n=1}^\infty \lambda_n^\sigma \langle v,\phi_n\rangle^2 \Bigr)^{\!\frac{1}{2}}
}$
that is equivalent to the $H^\sigma(\Omega)$ Sobolev norm, we can write
\begin{equation}\label{eqn:checku0}
\begin{aligned}
\check{u}^0(x,t)=&\sum_{n=1}^\infty \bigl(e^{-\lambda_n t} \langle (\hat{a} u_{0x})_x \!+\!\hat{f}(u_0),\phi_n\rangle
\!+\! \int_0^t e^{-\lambda_n (t-s)} \langle (f_{ex}'(u_{ex}(s))+q)\check{u}^0(s),\phi_n\rangle\, ds\bigr) \phi_n(x)\\
=:&\,\check{u}^{0,1}(x,t)+\check{u}^{0,2}(x,t)
\end{aligned}
\end{equation}
From Sobolev's embedding theorem with $\sigma > d/2+2+\beta = 5/2 + \beta$,
\begin{equation}\label{eqn:estucheck01C2beta}
\begin{aligned}
\|\check{u}^{0,1}(t)\|_{C^{2,\beta}(\Omega)}&\leq C_{\dot{H}^\sigma,C^{2,\beta}}^\Omega
\Bigl(\sum_{n=1}^\infty \lambda_n^\sigma e^{-2\lambda_n t} \langle (\hat{a} u_{0x})_x+\hat{f}(u_0),\phi_n\rangle^2\Bigr)^{1/2}\\
&\leq  C_{\dot{H}^\sigma,C^{2,\beta}}^\Omega
\sup_{\lambda\geq\lambda_1}\lambda^{\sigma/2-1} e^{-\lambda t}
\|(\hat{a} u_{0x})_x+\hat{f}(u_0)\|_{\dot{H}^2(\Omega)}\\
&\leq  C_{\dot{H}^\sigma,C^{2,\beta}}^\Omega
\Psi(t;\sigma,\lambda_1)
\Bigl(\|\hat{a}\|_{C^2(\Omega)} \|u_{0\,xx}\|_{\dot{H}^2(\Omega)} \\
&\qquad +\|\hat{a}\|_{C^3(\Omega)} \|u_{0\,x}\|_{\dot{H}^2(\Omega)}
+ C_\mathbb{L}\|\hat{f}\|_{C^2(J)} \|u_0\|_{\dot{H}^2(\Omega)}\Bigr)
\end{aligned}
\end{equation}
and
\begin{equation}\label{eqn:estucheck02C2beta}
\begin{aligned}
\|&\check{u}^{0,2}(t)\|_{C^{2,\beta}(\Omega)}\leq C_{\dot{H}^\sigma,C^{2,\beta}}^\Omega
\Bigl(\int_0^\tau\sum_{n=1}^\infty \lambda_n^\sigma e^{-2\lambda_n (t-s)}
\langle ({f_{ex}}'(u_{ex}(s))+q)\check{u}^0(s),\phi_n\rangle^2\, ds\Bigr)^{1/2}\\
&\leq  C_{\dot{H}^\sigma,C^{2,\beta}}^\Omega C(\Omega)
\left(\int_0^t\Psi(\tau-s;\sigma,\lambda_1)\|{f_{ex}}'(u_{ex}(s))+q\|_{H^2(\Omega)}^2\, ds \right)^{1/2}
\|\check{u}^0\|_{C([0,t];C^2(\Omega))}\,,
\end{aligned}
\end{equation}
with $C_{\mathbb{L}}$ such that
$\|\mathbb{L} j(v)\|_{L^2(\Omega)}\leq C_{\mathbb{L}} \|j\|_{C^2(\mathbb{R})} \|v\|_{\dot{H}^2(\Omega)}$ for all $j\in C^2(\mathbb{R})$, $v\in \dot{H}^2(\Omega)$ and
\begin{equation}\label{eqn:Psi}
\Psi(t;\sigma,\lambda_1) = \begin{cases}
(\sigma/2-1)^{\sigma/2-1} e^{1-\sigma/2}\, t^{1-\sigma/2}&\mbox{ for }t\leq \frac{\sigma-2}{2\lambda_1}\\
\lambda_1^{\sigma/2-1} e^{-\lambda_1 t}&\mbox{ for }t\geq \frac{\sigma-2}{2\lambda_1}
\,.\end{cases}
\end{equation}
The appearance of $\|\check{u}^0\|_{C([0,t];C^2(\Omega))}$ on the right hand side of \eqref{eqn:estucheck02C2beta} forces us to look at the supremum of $\|\check{u}^0(t)\|_{C^2(\Omega)}$ over $t\in[0,T]$, which, however, due to the singularity of $\Psi$ at $t=0$ cannot be estimated appropriately by \eqref{eqn:estucheck01C2beta}.
Thus the convolution term in \eqref{eqn:checku0} inhibits exponential decay as would be expected from the estimate of the first term in \eqref{eqn:checku0}.

Thus we have no means of establishing contractivity in the presence of the
initial term \eqref{eqn:checku0_init} and therefore will assume that
$u_0$ and $f(0)$ vanish.
Thus, putting this altogether
\begin{equation}\label{eqn:estucheck}
\begin{aligned}
&\|\check{u}(t)\|_{C^{2,\beta}(\Omega)}\leq
\|\check{u}^r\|_{C([0,t];C^{2,\beta}(\Omega))}\\
&\leq C
\|u_{ex,t}\|_{C^{2,\beta}(\Theta)}
(\|\hat{a}\|_{C^3(\Omega)}+(1+\|u_{ex}\|_{C^{0,1}(\Theta)})\|\hat{f}\|_{C^{1,\beta}(J)})
\,.
\end{aligned}
\end{equation}
provided $\|\hat{a}\|_{C^{2,\beta}(\Theta_t)}+\|\hat{f}\|_{C^{1,\beta}(J)}\leq\rho$ small enough.\\
The same estimate can be used for bounding $\|\check{v}\|_{C^{2,\beta}(\Omega\times(0,T))}$

This together with \eqref{eqn:esta}, \eqref{eqn:estf} yields contractivity of $\mathbb{T}$ on a ball of sufficiently small radius $\rho$.

\begin{theorem}
Under the assumptions of Proposition \ref{prop:M} and if additionally
$\|u_{ex,t}\|_{C^{0,\beta}(\Theta)}$ is sufficiently small and $u_0=0$, $f_{ex}(0)=0$, $g^u(0)=0$, there exists $\rho>0$ such that $\mathbb{T}$ is a self-mapping on $B_\rho^{X_a\times X_f}(a_{ex},f_{ex})$ and the convergence estimate
\[
\|\mathbb{T}(a,f)-\mathbb{T}(a_{ex},f_{ex})\|_{X_a\times X_f}\leq
q \|(a,f)-(a_{ex},f_{ex})\|_{X_a\times X_f}
\]
holds for some $q\in(0,1)$ and $X_a$, $X_f$ as in \eqref{eqn:XaXf}.
\end{theorem}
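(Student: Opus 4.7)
The plan is to combine Proposition \ref{prop:M} (step (a)) with the smallness estimate \eqref{eqn:estucheck} from step (b) to show that $\mathbb{T}$ contracts on a small ball around $(a_{ex},f_{ex})$. Starting from the identity $\mathbb{M}(\hat{a}^+,\hat{f}^+) = \mathbb{F}(a,f)-\mathbb{F}(a_{ex},f_{ex})$, I would apply the bound $\|\mathbb{M}^{-1}\|\leq \bar{C}(g^u,g^v)$ from Proposition \ref{prop:M} to deduce
\[
\|(\hat{a}^+,\hat{f}^+)\|_{X_a\times X_f}\leq \bar{C}(g^u,g^v)\bigl(\|\check{u}(T)\|_{C^{2,\beta}(\Omega)}+\|\check{v}(T)\|_{C^{2,\beta}(\Omega)}\bigr).
\]
Then I would insert estimate \eqref{eqn:estucheck} (and the analogous one for $\check{v}$) to obtain
\[
\|(\hat{a}^+,\hat{f}^+)\|_{X_a\times X_f}\leq \tilde{C}\,\|u_{ex,t}\|_{C^{2,\beta}(\Theta)}\,\|(\hat{a},\hat{f})\|_{X_a\times X_f},
\]
valid whenever $\|(\hat{a},\hat{f})\|_{X_a\times X_f}\leq\rho$ with $\rho$ small enough for the $\rho_0,\rho_1$ thresholds of step (b) to be satisfied. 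Choosing $\|u_{ex,t}\|_{C^{2,\beta}(\Theta)}$ sufficiently small so that $q:=\tilde{C}\|u_{ex,t}\|_{C^{2,\beta}(\Theta)}<1$ gives the contraction estimate.

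For the self-mapping property I would verify, first, that $(a^+,f^+)$ actually lies in $X_a\times X_f$. Membership of $\hat{a}^+$ in $X_a$ (with $\hat{a}^+(L)=0$) follows directly from the representation \eqref{eqn:idahatplus} together with the assumption that $a(L)$ is known; membership of $\hat{f}^+$ in $X_f$ — i.e.\ the vanishing conditions $\hat{f}^+(g^u(0))=\hat{f}^+(g^v(0))=(\hat{f}^+)'(g^u(0))=(\hat{f}^+)'(g^v(0))=0$ — was already built into Lemma \ref{lem:fhat} via \eqref{eqn:fhatplus0}. The extra hypotheses $u_0=0$, $f_{ex}(0)=0$, $g^u(0)=0$ are needed precisely so that the initial term $\check{u}^0$ arising from \eqref{eqn:initucheckvcheck} actually vanishes (cf.\ \eqref{eqn:checku0_init}); this is what reduces the bound on $\check{u}$ to only the $\check{u}^r$ part in \eqref{eqn:estucheck}, circumventing the obstruction to exponential decay documented in the discussion of \eqref{eqn:checku0}. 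The self-mapping inequality $\|(\hat{a}^+,\hat{f}^+)\|_{X_a\times X_f}\leq \rho$ then follows from the same contraction estimate with $q<1$.

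Contraction itself follows by applying the same argument to the difference of two inputs $(a_1,f_1),(a_2,f_2)\in B_\rho(a_{ex},f_{ex})$: the linearity of $\mathbb{M}$ gives $\mathbb{M}(\mathbb{T}(a_1,f_1)-\mathbb{T}(a_2,f_2))=\mathbb{F}(a_1,f_1)-\mathbb{F}(a_2,f_2)$, and the difference $\mathbb{F}(a_1,f_1)-\mathbb{F}(a_2,f_2)$ can be bounded in exactly the same $C^{2,\beta}$ norm by re-running the step (b) analysis with $(a_{ex},f_{ex})$ replaced by one of the two iterates — the {\sc pde}s \eqref{equ:ucheck}, \eqref{equ:vcheck} are of the same structure with coefficients perturbed by quantities of size $\rho$, which remain within the admissible thresholds.

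The main obstacle I expect is bookkeeping rather than substance: one must check that all nonlinear remainder terms in step (b) — the integrals $\int_0^1 f_{ex}''(u_{ex}+\theta\hat{u})\,d\theta$ and the products with $\bar{u}$ — depend Lipschitz-continuously on $(a,f)$ in the $X_a\times X_f$ norm, uniformly on $B_\rho$. This uses the Banach algebra property of $C^{2,\beta}$ and the smoothness of $u_{ex}$, but has to be threaded carefully through the splitting $\check{u}=\check{u}^r+\check{u}^0$ to ensure that the small-prefactor $\|u_{ex,t}\|_{C^{2,\beta}(\Theta)}$ genuinely multiplies the full difference $\|(\hat{a},\hat{f})\|_{X_a\times X_f}$ rather than just part of it. Once this is verified, choosing $\|u_{ex,t}\|_{C^{2,\beta}(\Theta)}$ small yields both the self-mapping and the contraction with a common $q\in(0,1)$.
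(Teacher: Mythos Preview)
Your proposal is correct and follows essentially the same route as the paper: combine the bound on $\mathbb{M}^{-1}$ from Proposition \oldref{prop:M} with the step (b) estimate \eqref{eqn:estucheck} (which relies on $u_0=0$, $f_{ex}(0)=0$, $g^u(0)=0$ to kill the $\check{u}^0$ contribution), and choose $\|u_{ex,t}\|$ small enough to make the resulting constant less than one. Note that the paper only asserts and proves the convergence estimate toward the exact solution $(a_{ex},f_{ex})$ as stated in the theorem, not the full Lipschitz contraction between two arbitrary iterates that you sketch in your final paragraph; your extra step is plausible but goes beyond what is claimed.
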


\begin{remark} In \cite[Theorem 3.5]{KaltenbacherRundell:2020b} we have
alternatively proven contractivity for monotone $f$.
However, the approach taken there does not seem to go through here
for the following reasons.
Additionally to the exponential decay of $\bar{u}$,
we would need to show exponential decay of $(\hat{a}\, \bar{u}_x)_x$.
Another obstacle is that the strategy of showing contractivity of $\mathbb{T}$ by exploiting the regularity gain
\[
\begin{aligned}
\|\mathbb{T}^2(a,f)-\mathbb{T}^2(a_{ex},f_{ex})\|_{C^{3,\beta}(\Omega)\times C^{2,\beta}(J)}
&\leq C(g^u,g^v)\|(\check{u},\check{v})\|_{C^{2,\beta}(\Omega)^2}\\
&\leq \tilde{C}\,C(g^u,g^v)\|\mathbb{T}(a,f)-\mathbb{T}(a_{ex},f_{ex})\|_{C^{1,\beta}(\Omega)\times C^{1,\beta}(J)}
\end{aligned}
\]
does not work here.
This is because we have no possibility to estimate the $C^{1,\beta}(J)$
norm of the $f$ part of
$\|\mathbb{T}(a,f)-\mathbb{T}(a_{ex},f_{ex})\|_{C^{1,\beta}(\Omega)\times C^{1,\beta}(J)}$ in terms of the $C^{1,\beta}(\Omega)$ norm of $\check{u},\check{v}$. (So far the Volterra integral equation approach from step (a) only works for differentiability order $\geq2$.)
However, the maximal parabolic regularity approach in \cite{KaltenbacherRundell:2020b} relies on the embedding $W^{2,p}(\Omega)\to C^{1,\beta}(\Omega)$ applied to $\check{u},\check{v}$ and therefore only works with the $C^{1,\beta}$ norm of $\hat{f}$.
\end{remark}

\subsection{Final time / time trace observations of one or two states}\label{subsec:axfu-twomixed}

Identify $a(x)$, $f(u)$ in \eqref{eqn:u-f}, \eqref{eqn:v-f}
with homogeneous impedance boundary conditions \eqref{eqn:bc},
from observations
\begin{equation}\label{eqn:mixeddata}
g(x)=u(x,T), \quad x\in\Omega, \qquad h(t)=v(\tilde{x},t), \quad t\in\Theta\subseteq[0,T]\,,
\end{equation}
for some $\tilde{x}\in\overline{\Omega}$, typically on the boundary. We here assume that $\tilde{x}$ is the right hand boundary point $L$; other cases can be treated analogously.
However, if $\tilde{x}$ is an interior point, then some term that we can eliminate by using the boundary condition would have to be additionally be taken into account in the analysis.
More precisely, we make use of the right hand boundary condition that implies
$a^+_x(L) v_x(L,t;a,f) =- \frac{a^+_x(L)}{a^+(L)} \gamma^v v(L,t;a,f)$ and replace $v(L,t;a,f)$ by the data $h(t)$ here.
Thus, again projecting onto the measurement manifolds $\Omega\times\{T\}$ and $\{L\}\times(0,T)$,
we define the fixed point operator $\mathbb{T}$ by $(a^+,f^+)=\mathbb{T}(a,f)$ with
\begin{equation}\label{eqn:fp_fiti-titr}
\begin{aligned}
&(a^+ g_x)_x(x)+f^+(g(x))=D_tu(x,T;a,f)-r^u(x,T) \quad x\in\Omega\\
&a^+(L) v_{xx}(L,t;a,f) - \frac{a^+_x(L)}{a^+(L)} \gamma^v h(t)+f^+(h(t))=h_t(t)-r^v(L,t) \quad t\in\Theta\,,
\end{aligned}
\end{equation}
which, when assuming that $a^L=a_{ex}(L)$ is known and $\gamma^v(L)=0$, simplifies to
\begin{eqnarray}
(a^+ g_x)_x(x)+f^+(g(x))&=&D_tu(x,T;a,f)-r^u(x,T) \quad x\in\Omega \label{eqn:faplus_mixed_u}\\
f^+(h(t))&=&h_t(t)-r^v(L,t)-a^L v_{xx}(L,t;a,f) \quad t\in\Theta\,. \label{eqn:faplus_mixed_v}
\end{eqnarray}
The recursion for the the differences $\hat{a}^+=a^+-a_{ex}$, $\hat{f}^+=f^+-f_{ex}$, $\hat{a}=a-a_{ex}$,
reads as
\begin{eqnarray}
(\hat{a}^+ g_x)_x(x)+\hat{f}^+(g(x))&=&D_t\hat{u}(x,T) \quad x\in\Omega\label{eqn:ahat_mixed}\\
\hat{f}^+(h(t))&=&-a^L \hat{v}_{xx}(L,t;a,f) \quad t\in\Theta\,.\label{eqn:fhat_mixed}
\end{eqnarray}
where $\hat{u}$, $\hat{v}$ solve \eqref{eqn:uhat}, \eqref{eqn:vhat}, that is
\[
\mathbb{M}(\hat{a}^+,\hat{f}^+)=\mathbb{F}(a,f)-\mathbb{F}(a_{ex},f_{ex})\,,
\]
with the operators $\mathbb{M}$ and $\mathbb{F}$ defined by
\[
\mathbb{M}(a^+,f^+)=
\left(\begin{array}{c}(a^+ g_x)_x+f^+(g^u)\\f^+(h)\end{array}\right)\,, \quad
\mathbb{F}(a,f)=
\left(\begin{array}{c}u_t(\cdot,T;a,f)\\ -a^Lv_{xx}(L,\cdot;a,f)\end{array}\right)\,.
\]

Inverting $\mathbb{M}$ is therefore much easier this time. We can first invert \eqref{eqn:faplus_mixed_v} for $f^+$ relying on the range condition
\begin{equation}\label{eqn:rangecondition_mixed}
J=h(\Theta)\,, \quad
J\supseteq u_{ex}(\Omega\times(0,T))\,, \quad J\supseteq v_{ex}(\Omega\times(0,T))
\end{equation}
(in place of \eqref{eqn:rangecondition_finaltime}) and then compute $a^+$ from \eqref{eqn:faplus_mixed_u} in exactly the same way as we have done before, cf. \eqref{eqn:esta_withf}. It will turn out that a lower regularity estimate of $a^+$ and $f^+$ (or more precisely, of $\hat{a}^+$ and $\hat{f}^+$, in order to prove contractivity) suffices, that is, we will use the function spaces
$X_a=C^{2,\beta}(\Omega)$, $X_f=C^{1,\beta}(J)$ (with further boundary conditions).

\paragraph{Step (a), bounding $\mathbb{M}^{-1}$}:
From \eqref{eqn:fhat_mixed}, we get, after differentiation,
\[
\|\fhatplusp (h)\, D_th\|_{C^{0,\beta}(\Theta)}\leq |a^L| \|D_t\hat{v}_{xx}\|_{C^{0,\beta}(\Omega\times(0,T))}
\]
thus, assuming strict monotonicity of $h$, that is,
\begin{equation}\label{eqn:mu_h}
|D_th(t)|\geq\delta
\end{equation}
we get, using $\hat{f}^+(p)=0$ for some $p\in J$,
\begin{equation}\label{eqn:estf_mixed}
\begin{aligned}
\|\hat{f}^+\|_{C^{1,\beta}(J)}\leq C(h) |a^L| \|D_t\hat{v}\|_{C^{2,\beta}(\Omega\times(0,T))}
\end{aligned}
\end{equation}
where $C(h)$ only depends in $\mu$ and $\|h\|_{C^{1,1}(\Theta)}$.
Likewise, we could attempt to estimate a higher order norm of $f^+$ by differentiating a second time
\[
\|\fhatpluspp(h)\, (D_th)^2\|_{C^{0,\beta}(\Theta)}\leq
\|\fhatplusp (h)\, D_t^2h\|_{C^{0,\beta}(\Theta)}
+|a^L| \|D_t^2\hat{v}_{xx}\|_{C^{0,\beta}(\Omega\times(0,T))}\,.
\]
However, this would involve too high derivatives of $\hat{v}$.

From \eqref{eqn:ahat_mixed}, that is \eqref{eqn:idahatplus}, we get
when taking only the $C^{1,\beta}$ norm,
(i.e., the $C^{2,\beta}$ norm of $\hat{a}$)
\begin{equation}\label{eqn:esta_withf_mixed}
\begin{aligned}
&\|\hat{a}^+_x\|_{C^{1,\beta}(\Omega)}\\
&\leq
\bigl(L\|\bigl(\tfrac{1}{g_x}\bigr)_x\|_{C^{1,\beta}(\Omega)}+ \|\tfrac{1}{g_x}\|_{C^{1,\beta}(\Omega)}\bigr) \bigl(\|D_t\hat{u}(T)\|_{C^{1,\beta}(\Omega)}+\|g\|_{C^{1,1}(\Omega))} \|\hat{f}^+\|_{C^{1,\beta}(J)}\bigr)
\end{aligned}
\end{equation}
in place of \eqref{eqn:esta_withf}, thus, together with \eqref{eqn:estf_mixed}, using again the abbreviations $\check{u}=D_t\hat{u}$, $\check{v}=D_t\hat{v}$,
\begin{equation}\label{eqn:esta_mixed}
\begin{aligned}
\|\hat{a}^+\|_{C^{2,\beta}(\Omega)}\leq
\bar{C}(g,h)\Bigl(\|\check{u}(T)\|_{C^{1,\beta}(\Omega)}+
\|\check{v}\|_{C^{2,\beta}(\Omega\times(0,T))}\Bigr)
\end{aligned}
\end{equation}
that is an estimate in norms that are by one order lower than those in \eqref{eqn:esta}.

\begin{proposition} \label{prop:M_mixed}
For $g\in C^{3,\beta}(\Omega)$, $h\in C^{1,1}(\Theta)$ satisfying \eqref{eqn:mu}, \eqref{eqn:rangecondition_mixed}, \eqref{eqn:mu_h}, $p\in J$,
there exists a constant $\bar{C}(g^u, g^v)$ depending only on
$\|g\|_{C^{3,\beta}(\Omega)}$, $\|h\|_{C^{1,1}(\Theta)}$, $\mu$, $\delta$, such that the operator $\mathbb{M}:X_a\times X_f\to C^{1,\beta}(\Omega)$ as defined in \eqref{eqn:MFb} with
\begin{equation*}
\begin{aligned}
&X_a=\{d\in C^{2,\beta}(\Omega)\, : \, d(L)=0\}\,,\\
&X_f=\{j\in C^{1,\beta}(J)\, : \, j(p)=0\}
\end{aligned}
\end{equation*}
is boundedly invertible with
\[
\|\mathbb{M}^{-1}\|_{C^{1,\beta}(\Omega)\to X_a\times X_f}\leq \bar{C}(g^u, g^v)\,.
\]
\end{proposition}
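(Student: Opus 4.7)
The plan takes advantage of the fact that the two equations defining $\mathbb{M}(a^+,f^+)$ decouple much more cleanly than in Proposition \ref{prop:M}: the second component involves only $f^+$ (evaluated along $h$), so one can solve for $f^+$ directly and then substitute into the first equation to obtain a first-order linear ODE for $a^+$. No Fredholm alternative or second-kind Volterra analysis as in Lemma \ref{lem:fhat} is needed.

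First I would solve the second component equation for $f^+$. Given $\phi_2 \in C^{1,\beta}(\Theta)$ in the image of $\mathbb{M}$, the equation $f^+(h(t)) = \phi_2(t)$ together with the strict monotonicity assumption \eqref{eqn:mu_h} (so $h^{-1}$ is $C^{1,1}$ on $J$) and the range condition \eqref{eqn:rangecondition_mixed} (so $J = h(\Theta)$) determines $f^+$ pointwise on all of $J$ as $f^+(y) = \phi_2(h^{-1}(y))$. The normalization $j(p) = 0$ built into $X_f$ fixes the remaining ambiguity in passing between $f^+$ and $\hat{f}^+$. Bounding $f^+$ in $C^{1,\beta}(J)$ is then precisely the calculation underlying \eqref{eqn:estf_mixed}: differentiate in $t$, divide by $h_t$, use $|h_t| \geq \delta$ together with $\|h\|_{C^{1,1}(\Theta)}$ to absorb the Jacobian and its H\"older modulus, and obtain $\|f^+\|_{C^{1,\beta}(J)} \leq C(h)\,\|\phi_2\|_{C^{1,\beta}(\Theta)}$ with $C(h)$ depending only on $\delta$ and $\|h\|_{C^{1,1}(\Theta)}$.

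Next I would recover $a^+$ from the first equation $(a^+ g_x)_x(x) + f^+(g(x)) = \phi_1(x)$ together with the boundary condition $a^+(L) = 0$ built into $X_a$. Integrating from $x$ to $L$ and dividing by $g_x$, which is bounded below by $\mu$ by \eqref{eqn:mu}, yields the explicit formula of \eqref{eqn:idahatplus} with $\check{u}(\cdot,T)$ replaced by $\phi_1$. Estimating the resulting expression in $C^{2,\beta}(\Omega)$ is then the same Banach-algebra manipulation as in \eqref{eqn:esta_withf_mixed}: use the fact that $C^{1,\beta}(\Omega)$ is a Banach algebra together with $\|j\circ g\|_{C^{1,\beta}(\Omega)} \leq C\,\|j\|_{C^{1,\beta}(J)}\,(1+\|g\|_{C^{1,1}(\Omega)})$, and exploit $g \in C^{3,\beta}(\Omega)$ to control $1/g_x$ and its first derivative in $C^{1,\beta}(\Omega)$. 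Combining with the estimate of Step 1 gives $\|a^+\|_{C^{2,\beta}(\Omega)} \leq \bar{C}(g,h)\bigl(\|\phi_1\|_{C^{1,\beta}(\Omega)} + \|\phi_2\|_{C^{1,\beta}(\Theta)}\bigr)$, which is the required bound on $\mathbb{M}^{-1}$.

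The main obstacle here is essentially cosmetic compared to Proposition \ref{prop:M}: once the decoupling is noticed, the difficulty reduces to careful bookkeeping of H\"older norms through the compositions with $h^{-1}$ and $g$. The only real care is to match orders of differentiability: the direct inversion of the second equation gains only one derivative on $f^+$ (hence the lower target regularity $C^{1,\beta}(J)$), while the integration in $x$ for $a^+$ gains one further derivative, so the assumption $g \in C^{3,\beta}(\Omega)$ is exactly what is needed to land $a^+$ in $C^{2,\beta}(\Omega)$.
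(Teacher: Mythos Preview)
Your proposal is correct and follows essentially the same route as the paper: first invert the second component $f^+(h)=\phi_2$ using the monotonicity \eqref{eqn:mu_h} and the normalisation $j(p)=0$ to obtain \eqref{eqn:estf_mixed}, then integrate the first component and divide by $g_x$ as in \eqref{eqn:idahatplus} to obtain \eqref{eqn:esta_withf_mixed} and hence \eqref{eqn:esta_mixed}. One small clarification: the condition $j(p)=0$ is not needed to remove an ambiguity in recovering $f^+$ from $\phi_2$ (that inversion is already unique), but rather to pass from the bound on $(f^+)'$ obtained by differentiating $f^+(h)=\phi_2$ to a bound on the full $C^{1,\beta}(J)$ norm of $f^+$.
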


\begin{remark}
The obvious spatially higher dimensional version of \eqref{eqn:ahat_mixed}, \eqref{eqn:fhat_mixed} is
\begin{eqnarray}
\nabla\cdot(\hat{a}^+ \nabla g)(x)+\hat{f}^+(g(x))&=&D_t\hat{u}(x,T;a,f) \quad x\in\Omega \label{eqn:ahat_mixed1}\\
\hat{f}^+(h(t))&=&-a^L \triangle\hat{v}(\tilde{x},t;a,f) \quad t\in\Theta\,.\label{eqn:fhat_mixed1}
\end{eqnarray}
However, \eqref{eqn:ahat_mixed1}, which is a transport equation for $\hat{a}^+$, in general only yields an estimate of $\hat{a}^+$ in a weaker norm than the one needed here.
\end{remark}

\paragraph{step (b) smallness of $\mathbb{F}(a,f)-\mathbb{F}(a_{ex},f_{ex})$}:\\
For $D_t\hat{u}=\check{u}$ we can just use the estimate \eqref{eqn:estucheckr},
and again assume that $u_0$ and $f(0)$ vanish.

\begin{theorem} \label{th:contr_fiti_titr}
For $g\in C^{3,\beta}(\Omega)$, $h\in C^{2,\beta}(\theta)$ satisfying \eqref{eqn:mu}, \eqref{eqn:rangecondition_mixed}, \eqref{eqn:mu_h},  and if additionally $f_{ex}(0)=0$, $u_0=0$,
$\|u_{ex,t}\|_{C^{2,\beta}(\Theta)}$ is sufficiently small, there exists $\rho>0$ such that $\mathbb{T}$ is a self-mapping on $B_\rho^{X_a\times X_f}(a_{ex},f_{ex})$ and the convergence estimate
\[
\|\mathbb{T}(a,f)-\mathbb{T}(a_{ex},f_{ex})\|_{X_a\times X_f}\leq
q \|(a,f)-(a_{ex},f_{ex})\|_{X_a\times X_f}
\]
holds for some $q\in(0,1)$ and
\begin{equation}\label{eqn:XaXf_mixed}
\begin{aligned}
&X_a=\{d\in C^{2,\beta}(\Omega)\, : \, d(L)=0\}\,,\\
&X_f=\{j\in C^{1,\beta}(J)\, : \, j(0)=0\}
\end{aligned}
\end{equation}
\end{theorem}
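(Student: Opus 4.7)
The argument follows the same two-step template as Subsection \oldref{subsec:axfu-twofinal}: (a) invert $\mathbb{M}$ via Proposition \oldref{prop:M_mixed}; (b) show that the residual $\mathbb{F}(a,f) - \mathbb{F}(a_{ex}, f_{ex}) = (\check{u}(\cdot,T),\, -a^L \check{v}_{xx}(L,\cdot))$ (with $\check{u} = D_t\hat{u}$, $\check{v} = D_t\hat{v}$) is small relative to $(\hat{a}, \hat{f})$. Proposition \oldref{prop:M_mixed} immediately reduces the task to bounding
\[
\|\check{u}(T)\|_{C^{1,\beta}(\Omega)} + \|\check{v}_{xx}(L,\cdot)\|_{C^{0,\beta}(\Theta)}
\]
by $\|\hat{a}\|_{X_a} + \|\hat{f}\|_{X_f}$, with a prefactor that can be made arbitrarily small by shrinking $\|u_{ex,t}\|_{C^{2,\beta}(\Theta)}$.

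For step (b) I would reuse the Schauder estimate \eqref{eqn:estucheckr} derived in the previous subsection: $\check{u}$ and $\check{v}$ satisfy the parabolic equations \eqref{equ:ucheck}, \eqref{equ:vcheck} with initial data \eqref{eqn:initucheckvcheck}. The assumption $u_0 = 0$ together with $f_{ex}(0) = 0$ and the built-in constraint $\hat{f}(0) = 0$ in $X_f$ makes \eqref{eqn:initucheckvcheck} vanish identically, so the components $\check{u}^0, \check{v}^0$---which were the source of the dissipativity obstruction discussed after \eqref{eqn:estucheck02C2beta}---simply do not appear. Applying \eqref{eqn:estucheckr} to $\check{u} = \check{u}^r$ and $\check{v} = \check{v}^r$ yields
\[
\|\check{u}\|_{C^{2,\beta}(\Omega\times(0,T))} + \|\check{v}\|_{C^{2,\beta}(\Omega\times(0,T))} \leq C\, \|u_{ex,t}\|_{C^{2,\beta}(\Theta)} \bigl(\|\hat{a}\|_{X_a} + \|\hat{f}\|_{X_f}\bigr)
\]
on any sufficiently small ball in $X_a \times X_f$. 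Since the spatial $C^{2,\beta}$ cylinder norm dominates both $\|\check{u}(T)\|_{C^{1,\beta}(\Omega)}$ and the trace $\|\check{v}_{xx}(L,\cdot)\|_{C^{0,\beta}(\Theta)}$, composing with Proposition \oldref{prop:M_mixed} gives
\[
\|(\hat{a}^+, \hat{f}^+)\|_{X_a\times X_f} \leq q \bigl(\|\hat{a}\|_{X_a} + \|\hat{f}\|_{X_f}\bigr), \qquad q = \bar{C}(g,h)\, C\, \|u_{ex,t}\|_{C^{2,\beta}(\Theta)}.
\]
Choosing $\|u_{ex,t}\|_{C^{2,\beta}(\Theta)}$ small forces $q \in (0,1)$, delivering both the self-mapping property on $B_\rho^{X_a\times X_f}(a_{ex}, f_{ex})$ and the contraction estimate.

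I expect the main technical hurdle to lie not in the abstract combination above but in verifying that the regularities delivered by the Schauder estimate genuinely match the norms demanded by Proposition \oldref{prop:M_mixed}. In particular, the boundary trace $\check{v}_{xx}(L,\cdot)$ must be controlled in $C^{0,\beta}(\Theta)$ by $\|\check{v}\|_{C^{2,\beta}(\Omega\times(0,T))}$---harmless in one spatial dimension---and the vanishing of \eqref{eqn:initucheckvcheck} needs to be carefully checked under the compatibility conditions $u_0 = 0$, $f_{ex}(0) = 0$, $\hat{f}(0) = 0$. Once these are settled, shrinking $\rho$ to absorb the nonlinear dependence of $C$ on $\|\hat{a}\|_{X_a} + \|\hat{f}\|_{X_f}$ (as implicit in the derivation of \eqref{eqn:estucheckr}) closes the argument without any further structural ingredient beyond the small-data assumption on $\|u_{ex,t}\|_{C^{2,\beta}(\Theta)}$.
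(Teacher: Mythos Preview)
Your proposal is correct and follows essentially the same approach as the paper: invoke Proposition~\oldref{prop:M_mixed} for step~(a), then for step~(b) reuse the Schauder estimate \eqref{eqn:estucheckr} from the previous subsection, noting that the assumptions $u_0=0$, $f_{ex}(0)=0$ (together with $\hat f(0)=0$ from $X_f$) kill the initial-data contribution \eqref{eqn:initucheckvcheck} so that $\check u=\check u^r$, $\check v=\check v^r$, and the smallness of $\|u_{ex,t}\|_{C^{2,\beta}(\Theta)}$ yields the contraction constant $q<1$. The paper's own proof is in fact the single sentence ``For $D_t\hat u=\check u$ we can just use the estimate \eqref{eqn:estucheckr}, and again assume that $u_0$ and $f(0)$ vanish,'' so your write-up is a faithful expansion of exactly that argument.
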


\begin{remark}
As opposed to the previous section, here the strategy of proving contractivity for monotone $f$ from \cite{KaltenbacherRundell:2020b} would also work here since we can estimate the $C^{1,\beta}$ norm of $\hat{f}^+$. This would also allow to tackle nonhomogeneous initial data by using a maximal parabolic regularity estimate, see \cite[Section 3.3.2]{KaltenbacherRundell:2020b}.
\end{remark}

\begin{remark}
Since no such condition as \eqref{eqn:kappa}
(that is sufficiently different slopes of $g_u$, $g_v$) is needed here, the result from Theorem \oldref{th:contr_fiti_titr} remains valid with a single run, i.e., $v=u$.
\end{remark}

\subsection{Time trace observations of two states}\label{subsec:axfu-titr}

A well-known ``{\it folk theorem\/}'' suggests that trying to
reconstruct a function depending on a variable $x$
from data measured in an orthogonal direction to $x$ is
inevitably severely ill-conditioned.
This has been borne out in innumerable cases.
The recovery of $a(x)$ in the parabolic equation
$u_t - \nabla.(a\nabla u) = f(u)$ from time-valued data
is an example of the above.
It is known that recovery of spatially-dependent coefficients in an
elliptic operator from time data-valued measurements made on the boundary
$\partial\Omega$ is extremely ill-conditioned with an exponential dependence
of the unknown in terms of the data.
We give a synopsis of the reasons for this that have particular relevance
to our current theme; while recovery of the diffusion coefficient $a(x)$ from
final time data is only a mildly ill-conditioned problem, that from
time-trace data is extremely ill-conditioned.

We will set the problem in one space dimension taking
$\Omega=[0,1]$ and remove all non-essential terms.
Thus we consider the recovery of the uniformly positive  $a(x)$ from
$u_t - (au_x)_x = 0$ with (say) boundary conditions
$u_x(0,t) = 0$, $u(1,t) = 0$ with initial value $u_0(x)$
where our overposed data is the value of $h(t) = u(0,t)$.
Note that this is a single function recovery from a linear problem.
The standard approach here is due to Pierce \cite{Pierce:1979}.
The solution to the direct problem has the representation
$u(x,t) = \sum_{n=1}^\infty \langle u_0,\phi_n\rangle\,e^{-\lambda_nt}\phi_n(x)$
where $\{\lambda_n,\phi_n\}$ are the eigenvalues / eigenfunctions of
$\mathbb{L} = -(a u_x)_x$ under the conditions $u_x(0)=u(1)=0$
and we normalise these by
the standard Sturm-Liouville condition $\phi_n'(0)=1$.
For a given $a(x)$ this is transmutable into the {\it norming constants\/}
and more usual Hilbert space norm
$\|\phi\|_2$,
 \cite{RundellSacks:1992b,RundellSacks:1992a}.
We have to recover $a(x)$ from $h(t) = u(0,t)$, that is from
\begin{equation}\label{eqn:Dirichlet_series}
h(t) = \sum_{n=1}^\infty A_n e^{-\lambda_n t}
\qquad
A_n = \langle u_0,\phi_n\rangle.
\end{equation}
Note that $\lambda_n>0$ and that $h(t)$ is in fact analytic.
It is well-known that the pair $\{\lambda_n,A_n\}$ is uniquely recoverable
from $h(t)$ and this is easily seen by taking Laplace transform
(analytically extending $h(t)$ to all $t>0$ if need be).
Then \eqref{eqn:Dirichlet_series} becomes
\begin{equation}\label{eqn:rat_func_series}
\mathcal{L}(h(t))\to s := H(s) = \sum_{n=1}^\infty \frac{A_n}{s+\lambda_n}
\end{equation}
and shows that knowing the meromorphic function $H(s)$ provides the location
$\lambda_n$ and residues $A_n$ of its poles.
Of course, the extreme ill-conditioning of this step is now apparent;
we know the values of the analytic function $H(s)$ on the positive real line
and have to recover information on the negative real axis -- in fact
all the way to $-\infty$ as the eigenvalues have the asymptotic behaviour
$-\lambda_n \sim -c n^2$.
Recovery of just the eigenvalues isn't sufficient to recover $a(x)$
but we have further information, namely the sequence $\{A_n\}$.
The simplest way to use this is to select $u_0$ to be impulsive -- say
$u_0(x) = \delta(x)$ for then $A_n = \phi_n(0)$.
Thus with this we have both the spectrum and an endpoint condition
on the eigenfunctions and standard results from inverse Sturm-Liouville theory
show this is sufficient for the recovery of $a(x)$,
\cite{Borg:1946,Gelfand-Levitan:1951,RundellSacks:1992b}.

Actually, with this approach the computational cost of an effective
reconstruction of $a(x)$ is relatively low.
Determining the coefficients from a finite sum in \eqref{eqn:rat_func_series}
is just Pad\'e approximation of $H(s)$ and there are efficient methods
such as in \cite{BakerGrave-Morris:1996} to effect this recovery.
The reconstruction of $a(x)$ from this finite spectral can be accomplished
extremely quickly, \cite{RundellSacks:1992b}.

An entirely similar argument holds if instead our purpose was  to recover
the potential $q(x)$ in $\,\mathbb{L} = -u_{xx} + q(x)u = 0$.
The recovery of two spectral sequences proceeds identically and
this potential form for $\mathbb{L} $  is the canonical version
\cite{RundellSacks:1992a,CCPR:1997} of the inverse Sturm-Liouville problem
to which all others can be transformed by means of the Liouville transform.
In fact, this part of the process is only mildly ill-conditioned in terms
of mappings between spaces for we have
$\|q_1 - q_2\|_{L_2} \leq C\|\lambda_{n,1}-\lambda_{n,2}\|_{\ell^2}$.

However, the above paints a rather too rosy picture of the difficulties
involved.
The basic asymptotic form for the eigenvalues for a $q\in L^2$ is
\begin{equation}\label{eqn:sturm_liouville_eigen}
\lambda_n(q) = \bigl(n-{\textstyle{\frac{1}{2}}}\bigr)^2\pi^2 +
\int_0^1 q(s)\,ds + \epsilon_n, \qquad \epsilon_n \in \ell^2
\end{equation}
The dominant term in this expansion is identical for all $q\in L^2$.
The more regularity assumed of $q(x)$ the more rapid the decay of
the ``information sequence'' $\{\epsilon_n\}$,
\cite{PoschelTrubowitz:1987,CCPR:1997,Savchuk:Shalikov:2006}.
For example, if $q\in H^{m,2}$ then
$\epsilon_n \sim \sigma_n/n^{2[m/2]+1}$ where again $\sigma_n\ell^2$.
Thus the approximation to the eigenvalues  obtained from inverting
\eqref{eqn:rat_func_series} will inevitably contain errors and
from this we have to recover the sequence $\{\epsilon_n\}$ by first
subtracting off the term asymptotic to effectively $n^2\pi^2$.
For $n=10$ this is the order of a thousand and meantime the information
is in a possibly rapidly decreasing sequence of the difference.
In short, the inverse Sturm-Liouville problem is very mildly ill-conditioned from a space domain/range definition. but the strong masking by the
fixed leading term in the eigenvalue expansion which is independent of $q$
makes recovery of anything more than a few Fourier modes impractical from
other than exceedingly accurate spectral data.

From the existence of the Liouville transform one might conclude that
the picture is identical for the determination of the coefficient $a(x)$.
This is indeed so up to the point where we recover the spectral information.
However, the version of \eqref{eqn:sturm_liouville_eigen} for this case is
instead
\begin{equation}\label{eqn:sturm_liouville_eigen_a}
\lambda_n(q) = {\textstyle \frac{(n-\frac{1}{2})^2\pi^2}{L^2}} + \epsilon_n,
\qquad\mbox{where }\
L = \int_0^1 [a(s)]^{-1/2}\,ds.
\end{equation}
Thus unlike the potential case where the leading and dominant term is
independent of $q(x)$, the leading term now contains information about the
coefficient $a(x)$.
For a fixed error in the data, this can make a substantial difference.

All of the above has been predicated on two factors; that we are in one
space dimension and our underlying parabolic equation is linear.
With a nonlinear reaction term $f(u)$ the whole approach above fails
(and this is precisely our case of interest).
In higher space dimensions there is no sequel to the inverse Sturm-Liouville
theory and much more information is required over the one dimensional case.

There is an approach that can transcend these issues.
This is to look directly at the maps from unknown coefficient to the data:
$F_a[a] \to u(0,t)$ or $F_q[q] \to u(0,t)$.
We can get a sense of the practical invertibility by looking at the
Jacobian matrix $J$ arising from a finite dimensional version of
$F'$, where $F := [F_a\; F_q]$, taken in a direction
$\delta a$ or $\delta q$:
\begin{equation}\label{eqn:F'_map}
\begin{aligned}
 &\hat u_t - \nabla\cdot(a(x)\nabla \hat u) = f'(u)\hat u -
\nabla.(\delta a\nabla u) \\
 &\hat u_t - \triangle \hat u + q(x) \hat{u}= f'(u)\hat u - \delta q\,u \\
\end{aligned}
\end{equation}

An important question is whether this is feasible in respect to even the
theoretical invertibility of the map $F'$:
either as one of its two components or in combination.
We provide a sketch below to show that in the linear case $f(u) = c(x)u$
$F'$ is indeed locally invertible in $\mathbb{R}^d$ for $d>1$,
then later consider the issue of its
practical invertibility by estimating the singular values
of the matrix $J$ representing $F'$.
With this $f$ we can absorb the contribution into the zero'th order term of
$\mathbb{L}$ and use the same notation for the eigenvalues/eigenfunctions
of $\mathbb{L}$ on $\Omega$ with respect to homogeneous impedance boundary
conditions.
With this, the  solution to \eqref{eqn:F'_map} in the potential case is given by
\begin{equation}\label{eqn:F'_map_pot}
\hat u(x,t) = \sum_1^\infty \int_0^t e^{-\lambda_n(t-\tau)}
\langle \delta q\, u_0,\phi_n\rangle e^{-\lambda_1\tau}\phi_n(x).
\end{equation}
Suppose now we have $\hat u(\tilde{x},t) = 0$ for $x=\tilde{x}\in\partial\Omega$
(if we only have this prescribed on $[0,T]$ then we simply extend
it by zero for $t>T$) and taking Laplace transforms gives
\begin{equation}\label{eqn:F'_map_zero}
\sum_1^\infty \frac{1}{p+\lambda_n}\frac{\phi_n(\tilde{x})}{p+\lambda_1}
\langle \delta q\; u_0,\phi_n\rangle = 0
\end{equation}
for all $p>0$.
Now we can select the point $\tilde{x}$ so that $\phi_n(\tilde{x}) \not=0$
since $\phi_n(x)$ satisfies homogeneous impedance conditions on
$\partial\Omega$ (or more exactly can select the origin of the eigenfunctions
to accomplish this for a prescribed $\tilde{x}$).
Then analyticity in \eqref{eqn:F'_map_zero} shows that
$\langle \delta q\, u_0,\phi_n\rangle = 0$ for all $n$ and completeness
implies that $\delta q \,u_0 =0$ and, by our assumption that $u_0$ was
nonzero, implies that $\delta q = 0$.
An entirely analogous argument holds for the case of $a(x)$.
It also trivially  extends to an initial value $u_0$ that is
a finite combinations of eigenfunctions and with a little more work
 extends to a general initial value $u_0$ that is not
identically zero in any subset of $\Omega$ of positive measure.
Finally, by considering the situation on the interval $[\epsilon,T]$
we have the new initial value $u(x,\epsilon)$ which is strictly positive
from the maximum principle provided only that $u_0(x)$ is non-negative
and non-trivial.
Thus we have proven,
\begin{theorem}\label{thm:q-a-injectivity}
Suppose the nonlinear term $f(u)$ is zero, the initial condition
$u_0(x)\not=0$ is nonnegative in $\Omega$, then the maps $F':q(x) \to u(\tilde{x},t)$
and $F':a(x) \to  u(\tilde{x},t)$ are injective.
\end{theorem}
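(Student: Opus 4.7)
The plan is to formalize the sketch preceding the statement; I treat the potential case $F'_q$ first, the conductivity case $F'_a$ following by an integration by parts. With $f\equiv 0$ the linearization $\hat u$ solves \eqref{eqn:F'_map} with $\hat u(\cdot,0)=0$ and source $-\delta q\cdot u$, and Duhamel's formula in the eigenbasis $(\lambda_n,\phi_n)$ of $\mathbb L$ combined with the closed-form forward solution $u(\cdot,\tau)=\sum_k e^{-\lambda_k\tau}\langle u_0,\phi_k\rangle\phi_k$ yields the explicit rational Laplace transform
\[
\mathcal L\hat u(\tilde x,p)=-\sum_{n,k}\frac{\phi_n(\tilde x)\,\langle u_0,\phi_k\rangle\,\langle \delta q\,\phi_k,\phi_n\rangle}{(p+\lambda_n)(p+\lambda_k)}\qquad (p>0).
\]
Since parabolic solutions are real-analytic in $t$ for $t>0$, the assumption $\hat u(\tilde x,\cdot)\equiv 0$ on the observation window extends to all $t>0$, so that $\mathcal L\hat u(\tilde x,p)\equiv 0$. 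The observation point is chosen so that $\phi_n(\tilde x)\neq 0$ for every $n$: for a given $\tilde x\in\partial\Omega$, either the eigenfunctions are shifted to achieve this, or one avoids the meager zero set of each $\phi_n$ by an arbitrarily small perturbation of $\tilde x$.

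I would first prove injectivity in the reduced setting $u_0=\phi_1$: the double sum collapses to $-(p+\lambda_1)^{-1}\sum_n \phi_n(\tilde x)\langle \delta q\,\phi_1,\phi_n\rangle/(p+\lambda_n)$, residue extraction at each simple pole $p=-\lambda_n$ yields $\langle \delta q\,\phi_1,\phi_n\rangle = 0$ for every $n$, completeness of $(\phi_n)$ in $L^2(\Omega)$ gives $\delta q\cdot\phi_1\equiv 0$, and since $\phi_1>0$ on $\Omega$ by the Perron--Frobenius principle, $\delta q\equiv 0$.

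To remove the restriction $u_0=\phi_1$ and treat the general nonnegative, nontrivial $u_0$ of the hypothesis, I would invoke the strong parabolic maximum principle: $u(\cdot,\epsilon)>0$ strictly on $\overline{\Omega}$ for every $\epsilon>0$. Restarting the analysis on the time interval $[\epsilon,T]$ with $u(\cdot,\epsilon)$ as the new initial datum, the strict positivity ensures enough nonzero Fourier coefficients $c_k=\langle u(\cdot,\epsilon),\phi_k\rangle$ to resolve the double-sum residue identity mode by mode, forcing $\langle \delta q\,\phi_k,\phi_n\rangle=0$ for all $n,k$ and hence $\delta q\equiv 0$. This extension from the single-mode reduction to a general nondegenerate $u_0$ via the maximum principle is the main technical step of the plan.

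For the conductivity case $F'_a$ the source term in \eqref{eqn:F'_map} becomes $-\nabla\cdot(\delta a\,\nabla u)$; integrating by parts against each $\phi_n$ (the boundary contribution vanishes thanks to the homogeneous impedance conditions satisfied by both $u$ and $\phi_n$) replaces $\langle \delta q\,u,\phi_n\rangle$ by $\langle \delta a\,\nabla u,\nabla\phi_n\rangle$ throughout, and the same Laplace/residue scheme produces $\delta a\,\nabla u(\cdot,\epsilon)\cdot\nabla\phi_k\equiv 0$ weakly for all $k$. The hardest part of the whole plan is then the final inference $\delta a\equiv 0$ from this, which requires nondegeneracy of $\nabla u(\cdot,\epsilon)$ outside a set of measure zero; I would extract this from the Hopf lemma at $\partial\Omega$ combined with interior parabolic regularity applied to the strictly positive solution $u$.
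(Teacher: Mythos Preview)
Your strategy is essentially the paper's own: Duhamel in the eigenbasis, Laplace transform, extraction of coefficients via analyticity/residues, completeness, and the maximum-principle restart at $t=\epsilon$ to manufacture a strictly positive initial datum. Your treatment of the conductivity case (integration by parts, Hopf-type nondegeneracy of $\nabla u$) is in fact more explicit than the paper, which simply says ``an entirely analogous argument holds.''

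There is, however, a concrete misdirection in your handling of the general $u_0$. You write that strict positivity of $u(\cdot,\epsilon)$ ``ensures enough nonzero Fourier coefficients $c_k$'' and that the double-sum residue identity then forces $\langle \delta q\,\phi_k,\phi_n\rangle=0$ for all $n,k$. Neither claim is right. Strict positivity does \emph{not} imply $c_k\neq 0$ for $k\ge 2$ (the higher eigenfunctions change sign; e.g.\ on $(0,\pi)$ with Dirichlet conditions, $\sin^2 x>0$ on the interior yet $\langle \sin^2 x,\sin 2x\rangle=0$). And the residues of the double sum $\sum_{n,k}c_k\phi_n(\tilde x)A_{nk}(p+\lambda_n)^{-1}(p+\lambda_k)^{-1}$ at each $-\lambda_m$ only yield linear combinations of the $A_{nk}$, not the individual matrix entries.

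The paper's route avoids this trap: rather than chasing the $A_{nk}$, it aims only at $\langle \delta q\cdot u_0,\phi_n\rangle=0$ for every $n$, hence $\delta q\cdot u_0\equiv 0$ by completeness, and then invokes strict positivity of $u_0$ itself (not of its Fourier coefficients) to conclude $\delta q\equiv 0$. After the restart at $\epsilon$ this is exactly what you want: $\delta q\cdot u(\cdot,\epsilon)=0$ with $u(\cdot,\epsilon)>0$. The paper is admittedly terse on how the residue step survives the passage from $u_0=\phi_1$ to general $u_0$ (``with a little more work''), but the target of the argument should be $\delta q\cdot u_0$, not the full matrix $A_{nk}$. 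Redirect your ``main technical step'' accordingly.
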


We have carried out the process of computing the singular values of $F'$
taking the case of one spatial dimension and about a constant value for
$a$ or $q$, namely $a=1$ and $q=0$.
The initial condition was the first eigenfunction for
the associated elliptic operator, here $\sin(\frac{\pi}{2}x)$ and
the basis functions used for both $\delta a$ and $\delta q$ were
$\{\sin(n\pi x)\}_{n=1}^{20}$.
The resulting singular values  are shown in the Figure~\oldref{fig:sv_a-q}
plotted on a $\log_{10}$ scale.
\newdimen\xfiglen \newdimen\yfiglen %
\newdimen\xfigdim \newdimen\yfigdim %
\newbox\figurelegendone
\newbox\figureone
\newbox\figuretwo

\xfiglen = 2. true in
\yfiglen = 1.76 true in
\xfigdim=0.05\xfiglen
\yfigdim=0.083\yfiglen
%
\setbox\figurelegendone=\hbox{
\beginpicture
  \setcoordinatesystem units <\xfigdim,0.8\yfigdim> 
  \setplotarea x from 0 to 1.3, y from 0 to 3
\footnotesize
  \put {\Blue{$\bullet$}} [lb] at 0 1 \put {$dt = 0.001$} [lb] at 0.6 1
  \put {\Red{$\star$}} [lb] at 0 2 \put {$dt=0.01$} [lb] at 0.6 2
  \put {\Black{$\circ$}} [lb] at 0 3 \put {$dt=0.1$} [lb] at 0.6 3
\endpicture
}

\setbox\figureone=\vbox{\hsize=\xfiglen  
\beginpicture
\footnotesize
  \setcoordinatesystem units <\xfigdim,\yfigdim>  
  \setplotarea x from 1 to 20, y from -10 to 2
\scriptsize
  \axis bottom shiftedto y=-10 ticks numbered from 4 to 20 by 4 unlabeled short quantity 20 /
  \axis left ticks short numbered from -10 to 2 by 2 /
\small
 \put {$F'[a].\delta a$} [lt] at 8 2
 \put {$\log($sv$(J))$} [lb] at 1.4 2
 \put {$n$} [lb] at 20.1 -9.9
\put {\copy\figurelegendone} [rt] at 19 1
\multiput {\Blue{$\bullet$}} at   
    1.0000    2.3208
    2.0000    1.1030
    3.0000    0.2424
    4.0000   -0.4804
    5.0000   -1.0377
    6.0000   -1.5091
    7.0000   -1.9165
    8.0000   -2.2799
    9.0000   -2.6385
   10.0000   -3.0171
   11.0000   -3.4557
   12.0000   -3.9049
   13.0000   -4.4222
   14.0000   -4.9321
   15.0000   -5.5317
   16.0000   -6.1066
   17.0000   -6.8111
   18.0000   -7.4632
   19.0000   -8.3338
   20.0000   -9.0879
/
\multiput {\Red{$\star$}} at   
    1.0000    1.8213
    2.0000    0.6035
    3.0000   -0.2546
    4.0000   -0.9714
    5.0000   -1.5166
    6.0000   -1.9627
    7.0000   -2.3249
    8.0000   -2.6345
    9.0000   -2.9759
   10.0000   -3.3475
   11.0000   -3.7803
   12.0000   -4.2192
   13.0000   -4.7276
   14.0000   -5.2265
   15.0000   -5.8167
   16.0000   -6.3805
   17.0000   -7.0753
   18.0000   -7.7162
   19.0000   -8.5766
   20.0000   -9.3194
/
\multiput {$\circ$} at  
    1.0000    1.3257
    2.0000    0.1094
    3.0000   -0.7241
    4.0000   -1.3802
    5.0000   -1.8008
    6.0000   -2.0543
    7.0000   -2.2860
    8.0000   -2.5695
    9.0000   -2.9151
   10.0000   -3.2778
   11.0000   -3.6981
   12.0000   -4.1199
   13.0000   -4.6117
   14.0000   -5.0927
   15.0000   -5.6667
   16.0000   -6.2142
   17.0000   -6.8943
   18.0000   -7.5192
   19.0000   -8.3649
   20.0000   -9.1049
/
\endpicture
}   
\setbox\figuretwo=\vbox{\hsize=\xfiglen  
\beginpicture
  \setcoordinatesystem units <\xfigdim,\yfigdim>  
\scriptsize
  \setplotarea x from 1 to 20, y from -10 to 2
  \axis bottom shiftedto y=-10 ticks numbered from 4 to 20 by 4 unlabeled short quantity 20 /
  \axis left ticks short numbered from -10 to 2 by 2 /
\footnotesize
\small
 \put {$F'[q].\delta q$} [lt] at 8 2
 \put {$\log($sv$(J))$} [lb] at 1.4 2
 \put {$n$} [lb] at 20.1 -9.9
\put {\copy\figurelegendone} [rt] at 19 1
\multiput {\Blue{$\bullet$}} at   
    1.0000    2.3030
    2.0000    0.5518
    3.0000   -0.7634
    4.0000   -1.6032
    5.0000   -2.2515
    6.0000   -2.7980
    7.0000   -3.2994
    8.0000   -3.7828
    9.0000   -4.2780
   10.0000   -4.7813
   11.0000   -5.3151
   12.0000   -5.8610
   13.0000   -6.4496
   14.0000   -7.0509
   15.0000   -7.7104
   16.0000   -8.3824
   17.0000   -9.1395
   18.0000   -9.9086
/
\multiput {\Red{$\star$}} at   
    1.0000    1.8034
    2.0000    0.0524
    3.0000   -1.2576
    4.0000   -2.0844
    5.0000   -2.7100
    6.0000   -3.2271
    7.0000   -3.7016
    8.0000   -4.1627
    9.0000   -4.6399
   10.0000   -5.1268
   11.0000   -5.6469
   12.0000   -6.1799
   13.0000   -6.7570
   14.0000   -7.3468
   15.0000   -7.9957
   16.0000   -8.6567
   17.0000   -9.4033
/
\multiput {$\circ$} at  
   1.0000    1.3083
    2.0000   -0.4409
    3.0000   -1.7027
    4.0000   -2.4214
    5.0000   -2.9079
    6.0000   -3.3164
    7.0000   -3.7288
    8.0000   -4.1544
    9.0000   -4.6070
   10.0000   -5.0725
   11.0000   -5.5731
   12.0000   -6.0865
   13.0000   -6.6450
   14.0000   -7.2163
   15.0000   -7.8481
   16.0000   -8.4929
   17.0000   -9.2248
   18.0000   -9.9675
/
\endpicture
}   
%

%
%

\begin{figure}[ht]
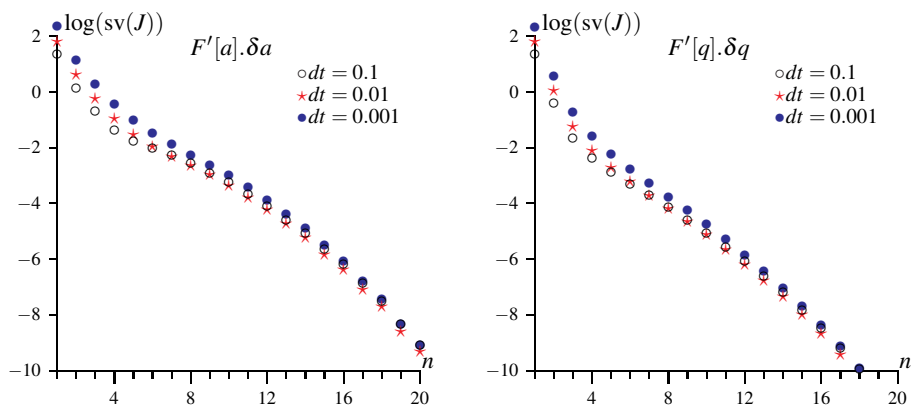

\hbox to \hsize{\hss\copy\figureone\hss\hss\copy\figuretwo\hss}
\caption{\small {\bf Logarithms of the singular values for $a(x)$ and $q(x)$
recovery from time trace data using $F:\delta c \to u(1,t)$ with $\{\delta c =\sin(n\pi x)\}$
}}
\label{fig:sv_a-q}
\end{figure}

The expected  exponential decay of these singular values is cleary evident.
From this data $h(t)$ -- which arose from a problem with the initial data
being the exact lowest eigenfunction and hence the best possible for
reconstructions, we see that only a few of the lowest singular values
are useable under anything but exceedingly high accuracy data.
These correspond to the lowest frequencies in the basis set.
While the same exponential decay is true for both $a(x)$ and $q(x)$, the rate
is greater for $q$, but more crucially, the first singular values in each
case differ considerably - by roughly a factor of ten.
Note also the effect that smaller time steps makes.
It is much less evident in the case shown  due to the solution $u$
to the base problem having only the lowest spectral mode.
In problems where higher modes are present and one has
to recover $\lambda_n$ from $e^{-\lambda_n t}$; the smaller the time step
$dt$ the better one can recover the eigenvalues and coefficients
 (especially the largest index ones sought)
from data error effecting this term.
For example, for $n=10$ and Dirichlet boundary conditions with $a=1$ and $q=0$
this means the factor $e^{-100\pi^2 dt}$ should lie above the noise level.
Of course,  if the diffusion coefficient is substantially smaller
then this changes the above estimate.

Our scaling of this problem where we have assumed the coefficients are of
order unity must be taken into account.
As noted in the introduction, in many physical settings
the diffusion coefficient could be several orders of magnitude smaller
and this would give a corresponding change in the spectrum $\{\lambda_n\}$.
In addition, for diffusion in fluids there is often a significant
dependence of the diffusion coefficient on temperature $u$ in addition
to any spatial dependence.
For example a standard model here is due to Arrhenius
\cite{Arrhenius:1889,Cussler:1997}
that supposes that $a = a_0\,e^{-E/R\,u}$ where $R$ is the
universal gas constant and $E$ the activation energy which is typically
such that $E >> R\,u$.
Of course, incorporating this into our model would dramatically increase
the complexity of any analysis.

It is entirely possible to extend these computation to higher
space dimensions and also in fact to include nonlinear reaction terms
although in the latter situation the above theory and hence the proof of
Theorem~\oldref{thm:q-a-injectivity} will no longer be valid.

This same effect, namely the superior reconstruction of $a(x)$ against
$q(x)$  from their simultaneous recovery from final time data,
 was noticed in \cite{KaltenbacherRundell:2020a}.
There is another ``{\it folk theorem\/}'' at play here.
The dependence of the solution of an elliptic equation on changes in its
coefficients is far greater from those appearing in higher order terms
(as in the $a(x)$ here) than in terms appearing in lower order coefficients.
One should expect that this translates into data terms arising from
projections of the solution onto a surface,
and in turn expect that one should be able to
more easily reconstruct coefficients appearing in the highest derivatives of
the operator as distinct from those appearing in the lowest order term.
from knowledge of projections of the solution onto a surface.

The reconstruction of both $a(x)$ and $f(u)$ requires two experiments
providing solutions and the corresponding data obtained from them.
The theoretical results of section~\ref{subsec:axfu-twofinal} shows this can
be sufficient under appropriate conditions.
In many applications one could attempt an oversampling by taking measurements
from $m>2$ data runs generating $m$ solutions providing overposed data.
Under the unrealistic situation of no data error this would simply lead
to redundancy in terms of uniqueness but might very well lead to
superior reconstructions due to the fact that exact optimality conditions
are difficult to determine and more measurements would likely lead to a
better case being obtained in one of the experiments that in turn would lead
to a larger number of singular values above a given threshold.

In the case of noise in the data one would also expect that oversampling
by utilizing $N$ solutions would lead to an effective lower noise rate
and in consequence superior reconstructions.
In the case of $N=2$ and 1\% random noise there is certainly a detectable
variation in the corresponding reconstructions and even a modest increase
in the number of measurements makes a significant reduction.

The extent to which this can be quantified depends on several factors.
One of these is the underlying source of the noise and its probability
density distribution.  If this is Gaussian then $N$ samples would reduce
the effective error by a factor of $\sqrt{N}$, but this holds only
for large $N$ and other distributions would give different answers.
However we are dealing with a far from linear process.
The reconstruction process is itself highly nonlinear in addition to the basic
forwards map being so due to the nonlinear term $f(u)$.
As a result of this, quantification of the overall uncertainty is far
from an easy question.

\section{Reconstructions}\label{sect:recons}

We will show the results of numerical experiments to recover both
$a$ and $f$  with two different types of data measurements.

The first of these is when we are only able to obtain ``census-type''
information and thus measure $g_u := u(x,T)$ and $g_v := v(x,T)$ for some
fixed time $T$ and as the result of two experiments obtained by
altering the initial, boundary or forcing term conditions.

The second is when we are able to measure multiple types of data from a
single experimental run: specifically both the final data and a
boundary measurement of either the solution $u$ or its normal derivative at a
fixed point $\tilde  x\in\partial\Omega$.

To procure data for the reconstructions a direct solver based on a
Crank-Nicolson scheme produced output values and data
values were produced from these by sampling at a relatively small
number $N_x$ and $N_t$ of points in both the spatial and temporal directions:
This sampled data was then both interpolated to a full working size to obtain
data values $g(x)$ and $h(t)$ commensurate to the grid being used by the solver
used in the inverse problem and such that the value of $g(x)$ was
smoothed by an $H^2$ filter while that of $h(t)$ by an $H^1$ filter.

All the reconstructed solutions we show are set in one space dimension.
This is in part to make the graphical results more transparent but also because
some of the algorithms have an $\mathbb{R}^1$ restriction from a
computational aspect and certainly from an analysis one as noted in the
previous section.
However, we will make note when extensions to higher space dimensions are feasible.

\subsection{Final time with two runs}\label{sect:fitifiti}

We shall pose the setting in one space variable as the exposition is simpler
as is the graphical representation of the reconstructions obtained.

The problem is to identify $a(x)$, $f(u)$ in
\begin{equation*}
\begin{aligned}
u_t-(a u_x)_x &=f(u) + r_u(x,t,u)\\
v_t-(a v_x)_x &=f(v) + r_v(x,t,v)\\
\end{aligned}
\end{equation*}
where $r_u$ and $r_v$ are known driving terms and where $u(x,t)$ and $v(x,t)$
are subject to homogeneous impedance boundary conditions
$$
a\frac{\partial u}{\partial x} +\alpha u =0
\qquad
a\frac{\partial v}{\partial x} +\beta v =0
$$
and initial values
$$
u(x,0)=u_0(x) \qquad
v(x,0)=v_0(x) \qquad
$$
from observations $g_u(x)=u(x,T)$, $g_v=v(x,T)$.

Some restrictions must be imposed on this data.
An ideal situation will include the fact $\nabla g_u$ and $\nabla g_v$
will not vanish on the interior of $\Omega$.
This can be accomplished by varying the impedance coefficients
over the boundary $\partial\Omega$ and by imposing a larger positive flux into
the interior where $\alpha$, $\beta$ are smallest - for example Neumann
conditions.

Based on the fixed point operator defined in \eqref{eqn:fp_fiti-fiti}, we now
derive some reconstruction schemes based on different ways of resolving
\eqref{eqn:MFb} in an efficient way,
and provide computational results obtained with these schemes.

The first method is one of sequential iteration: recovering first
a new approximation to $a(x)$ using one direct solve
then followed by an update of $f(u)$ using a second direct solve,
but now using the new value of $a$.
More precisely,
let $f^0$, $a^0$ to some initial approximation, then for $k=0,1,2,\ldots$
\begin{itemize}
\item{}
compute $u_t(x,T;a^k,f^k)$ by solving \eqref{eqn:basic_pde_parabolic}
using the first set of initial/boundary conditions then differentiate
$u$ with respect to time at $t=T$ and for all $x\in\Omega$.
\item{}
Set $\phi(x):= u_t(x,T;a^k,f^k) - f^k\bigl(g_u(x)\bigr) - r_u(x,T,g_u)$
so that $\bigl((a(x) g_u'(x)\bigr)' = \phi(x)$.
Integrate this equation over $(0,x)$ using the boundary conditions
inherited by $g_u$ and $u(x,T;a^k,f_k)$  to obtain
$\Phi(x) := \int_0^x [u_t(s,T) - f^k(g_u(s)) - r_u(s,T)]\, ds$
which then gives the update
\begin{equation}\label{eqn:iter1}
a^{k+1}(x)=\frac{\Phi(x)}{g_u'(x)}
\end{equation}
\item{}
Use this new approximation $a^{k+1}$ for $a(x)$ together with the previous
value $f^k$
for $f$ to compute $v(x,T;a^{k+1},f^k)$ from the imposed second set of
initial/boundary conditions and thereby obtain the value of $v_t(x,T)$.
\item{}
We now update $f$ by
$$
f^{k+1}(g_v)= v_t(x,T)-\bigl(a^{k+1} g'_v(x)\bigr)'
$$
\end{itemize}
One drawback of this method is the need to differentiate the just-updated
value of $a(x)$ that can lead to numerical instabilities.
Here is a way to avoid this
and works well in a single space dimension.

We again let $f^0$, $a^0$ be some initial approximation, then for
$k=0,1,2,\ldots$
\begin{itemize}
\item{}
Compute both $u_t(T)$ and $v_t(T)$ by solving
\eqref{eqn:basic_pde_parabolic}, using the two sets of initial/boundary values
\begin{equation}\label{eqn:u-v_pair}
\begin{aligned}
u_t-(a u_x)_x &=f(u) + r_u(x,t,u)\qquad u(x,0)=u_0,\quad
a\frac{\partial u}{\partial x} +\alpha u = k_u(x,t)\\
v_t-(a v_x)_x &=f(v) + r_v(x,t,v)\qquad v(x,0) = v_0\quad
a\frac{\partial u}{\partial x} +\beta v = k_v(x,t)\\
\end{aligned}
\end{equation}
and hence obtain $u_t(x,T;a^k,f^k)$ and $v_t(x,;a^k,f^kT)$.
\item{}
Let $W(x) = g_u(x) g'_v(x) - g_v(x) g'_u(x)$ and
$W_p(x) = g_u(x) g''_v(x) - g_v(x) g''_u(x)$.
Multiply the first equation in \eqref{eqn:u-v_pair} by $g_u$,
the second by $g_v$ then subtract obtaining
$$
\bigl(a(x)g_u'(x)\bigr)'g_v(x) - \bigl(a(x)g_v'(x)\bigr)'g_u(x) = \psi(x)
$$
where
\begin{equation*}
\begin{aligned}
\psi(x) &= [u_t(x,T;a^k,f^k) - r_u(x,T,g_u) - f(g_u(x))]g_v(x) \\
&\quad -[v_t(x,T;a^k,f^k) - r_v(x,T,g_v) - f(g_v(x))]g_u(x)
\end{aligned}
\end{equation*}
Now integrating over $(0,x)$ we obtain
\begin{equation}\label{eqn:aW}
a(x)W(x) = \int_0^x \psi(s)\,ds
\end{equation}
from which an update $a^{k+1}(x)$ can be obtained by division by $W$.
It is obviously an advantage if $W$ doesn't vanish and this condition can
be obtained by controlling the values of $\alpha$, $\beta$ and
the input flux values $k_u$ and $k_v$.
(Even if $W$ has isolated zeros there are several approaches that can be
taken to still resolve \eqref{eqn:aW} in a stable manner.)
\item{}
To recover the next approximation for $f$ we multiply the first equation in
\eqref{eqn:u-v_pair} by $g'_v$, the second by $g'_u$ then subtract obtaining
$$
\tilde\phi = (u_t - r_u(x,T)g_v' - (v_t - r_v(x,T))g_u' - a(x)W_p(x)
$$
Then $f^{k+1}$ is determined by
\begin{equation}\label{eqn:f-update}
f^{k+1}\bigl(g_u(x)\bigr)g_v'(x) - f^{k+1}\bigl(g_v(x))\bigr)g'_u(x) =
\tilde\phi(x).
\end{equation}
The effect of this is to eliminate the derivative of the just computed $a(x)$.
\item{}
Solving \eqref{eqn:f-update} is not completely straightforward.
It is a delayed argument equation for $f$ if the maximum
value achieved for both $u$ and $v$ occurs at or near the same
point on $\partial\Omega$ but in fact the resolution of the entire scheme
is better if both $u$ and $v$ are ``more independent.''
\item{}
One approach is to use a basis representation for $f$ to  resolve this
situation.
Another, which works well if one of $g'_u$ or $g'_v$ is monotone,
is to use the approximation $\tilde f_1(u) = f^k(u)$ and then iterate
successively on \eqref{eqn:f-update} by
\begin{equation}\label{eqn:f_successive-update}
\tilde f_{j+1}\bigl(g_u(x)\bigr)g'_v(x) =
\tilde f_j\bigl(g_v(x))\bigr)g'_u(x) + {\tilde\phi}(x).
\end{equation}
This method worked well over a wide variety of examples of functions $f(u)$
and one such pairing of $a(x)$ and $f(u)$ recoveries is shown in
Figure~\oldref{fig:fiti_fiti} below.
\end{itemize}

\input colordvi

\input pictex
\font\smallsymbol = cmmi8
\newdimen\xfiglen \newdimen\yfiglen
\xfiglen=2.08 true in
\yfiglen=1.28 true in
\newbox\figurelegendone
\newbox\figurelegendtwo
\newbox\figurelegendthree
\newbox\figurelegendfour
\newbox\figurelegendfive
\newbox\figurelegendsix
\newbox\figurelegendseven
\newbox\figurelegendeight
\newbox\figureone
\newbox\figureoneb
\newbox\figuretwo
\newbox\figuretwob
\newbox\figurethree 
\newbox\figurethreeb 
\newbox\figurefour
\newbox\figurefive
\newbox\figuresix
%
\setbox\figurelegendone=\hbox{
\beginpicture
  \setcoordinatesystem units <0.07\xfiglen,0.08\yfiglen> 
  \setplotarea x from 0 to 2, y from 0 to 4
\footnotesize
\linethickness=0.6pt
\eightrm
   \setdashes <3pt>  \putrule from 0 3 to 1.5 3
   \setsolid  
   \Blue{\relax \putrule from 0 2 to 1.5 2 \relax}\relax
   \Red{\relax \putrule from 0 1 to 1.5 1 \relax}\relax
   \OliveGreen{\relax \putrule from 0 0 to 1.5 0 \relax}
  \setsolid
  \put {$a_{\hbox{act}}$}  [l] at 2 3
  \put {$a_{\hbox{iter}\,1}$}  [l] at 2 2
  \put {$a_{\hbox{iter}\,2}$}  [l] at 2 1
  \put {$a_{\mbox{iter}\,10}$} [l] at 2 0
\endpicture
}
\setbox\figurelegendtwo=\hbox{
\small
\beginpicture
  \setcoordinatesystem units <0.07\xfiglen,0.09\yfiglen> 
  \setplotarea x from 0 to 2, y from 0 to 3
\small
\linethickness=0.6pt
\eightrm
   \setdashes <3pt>  \putrule from 0 3 to 1.5 3
   \setsolid  
   \Blue{\relax\putrule from 0 2 to 1.5 2 \relax}\relax
   \Red{\relax\putrule from 0 1 to 1.5 1 \relax}\relax
   \OliveGreen{\relax\putrule from 0 0 to 1.5 0 \relax}\relax
  \setsolid
  \put {$f_{\hbox{act}}$}  [l] at 2 3
  \put {$f_{\hbox{iter}\,1}$}  [l] at 2 2
  \put {$f_{\hbox{iter}\,10}$}  [l] at 2 1
  \put {$f_{\hbox{iter}\,50}$} [l] at 2 0
\endpicture
}
\setbox\figurelegendthree=\hbox{
\small
\beginpicture
  \setcoordinatesystem units <0.07\xfiglen,0.09\yfiglen> 
  \setplotarea x from 0 to 2, y from 0 to 4
\footnotesize
\linethickness=0.6pt
\eightrm
   \setdashes <3pt>  \putrule from 0 4 to 1.5 4
   \setsolid  
   \Blue{\relax \putrule from 0 3 to 1.5 3 \relax}\relax
   \OliveGreen{\relax \putrule from 0 2 to 1.5 2 \relax}
   \Maroon{\relax \putrule from 0 1 to 1.5 1 \relax}\relax
   \Red{\relax \putrule from 0 0 to 1.5 0 \relax}\relax
  \setsolid
  \put {$a_{\hbox{act}}$}  [l] at 2 4
  \put {$a_{\hbox{iter}\,1}$}  [l] at 2 3
  \put {$a_{\hbox{iter}\,4}$}  [l] at 2 2
  \put {$a_{\hbox{iter}\,6}$}  [l] at 2 1
  \put {$a_{\mbox{iter}\,12}$} [l] at 2 0
\endpicture
}
\setbox\figurelegendfour=\hbox{
\small
\beginpicture
  \setcoordinatesystem units <0.07\xfiglen,0.09\yfiglen> 
  \setplotarea x from 0 to 2, y from 0 to 3
\small
\linethickness=0.6pt
\eightrm
   \setdashes <3pt>  \putrule from 0 4 to 1.5 4
   \setsolid  
   \Blue{\relax\putrule from 0 3 to 1.5 3 \relax}\relax
   \OliveGreen{\relax\putrule from 0 2 to 1.5 2 \relax}\relax
   \Maroon{\relax\putrule from 0 1 to 1.5 1 \relax}\relax
   \Red{\relax\putrule from 0 0 to 1.5 0 \relax}\relax
  \setsolid
  \put {$f_{\hbox{act}}$}  [l] at 2 4
  \put {$f_{\hbox{iter}\,1}$}  [l] at 2 3
  \put {$f_{\hbox{iter}\,4}$}  [l] at 2 2
  \put {$f_{\hbox{iter}\,6}$} [l] at 2 1
  \put {$f_{\hbox{iter}\,12}$} [l] at 2 0
\endpicture
}
\setbox\figurelegendfive=\hbox{
\small
\beginpicture
  \setcoordinatesystem units <0.07\xfiglen,0.08\yfiglen> 
  \setplotarea x from 0 to 2, y from 0 to 4
\footnotesize
\linethickness=0.6pt
\eightrm
   \setdashes <3pt>  \putrule from 0 3 to 1.5 3
   \setsolid  
   \Blue{\relax \putrule from 0 2 to 1.5 2 \relax}\relax
   \Red{\relax \putrule from 0 1 to 1.5 1 \relax}\relax
  \setsolid
  \put {$a_{\hbox{act}}$}  [l] at 2 3
  \put {$a_{\hbox{iter}\,1}$}  [l] at 2 2
  \put {$a_{\hbox{iter}\,4}$}  [l] at 2 1
\endpicture
}
\setbox\figurelegendsix=\hbox{
\small
\beginpicture
  \setcoordinatesystem units <0.07\xfiglen,0.09\yfiglen> 
  \setplotarea x from 0 to 2, y from 0 to 3
\small
\linethickness=0.6pt
\eightrm
   \setdashes <3pt>  \putrule from 0 3 to 1.5 3
   \setsolid  
   \Blue{\relax\putrule from 0 2 to 1.5 2 \relax}\relax
   \Red{\relax\putrule from 0 1 to 1.5 1 \relax}\relax
  \setsolid
  \put {$f_{\hbox{act}}$}  [l] at 2 3
  \put {$f_{\hbox{iter}\,1}$}  [l] at 2 2
  \put {$f_{\hbox{iter}\,4}$}  [l] at 2 1
\endpicture
}
\setbox\figurelegendseven=\hbox{
\small
\beginpicture
  \setcoordinatesystem units <0.07\xfiglen,0.07\yfiglen> 
  \setplotarea x from 0 to 2, y from 2 to 4
\footnotesize
\linethickness=0.6pt
   \setdashes <3pt>  \putrule from 0 4 to 1.5 4
   \setsolid  
   \Brown{\relax \putrule from 0 3 to 1.5 3 \relax}\relax
   \Maroon{\relax \putrule from 0 2 to 1.5 2 \relax}\relax
   \Red{\relax \putrule from 0 1 to 1.5 1 \relax}\relax
  \setsolid
\scriptsize
  \put {$a_{\hbox{act}}$}  [l] at 2 4
  \put {$a_{\hbox{recon}}\quad \beta = 20$}  [l] at 2 3
  \put {$a_{\hbox{recon}}\quad \beta = 10$}  [l] at 2 2
  \put {$a_{\hbox{recon}}\quad \beta = \phantom{1}5$}  [l] at 2 1
\endpicture
}
\setbox\figurelegendeight=\hbox{
\small
\beginpicture
  \setcoordinatesystem units <0.07\xfiglen,0.08\yfiglen> 
  \setplotarea x from 0 to 2, y from 1 to 6
\footnotesize
\linethickness=0.6pt
   \setdashes <3pt>  \putrule from 0 6 to 1.5 6
   \setsolid  
   \Blue{\relax \putrule from 0 1 to 1.5 1 \relax}\relax
   \OliveGreen{\relax \putrule from 0 2 to 1.5 2 \relax}\relax
   \Red{\relax \putrule from 0 3 to 1.5 3 \relax}\relax
   \Maroon{\relax \putrule from 0 4 to 1.5 4 \relax}\relax
   \Brown{\relax \putrule from 0 5 to 1.5 5 \relax}\relax
\scriptsize
  \setsolid
  \put {$f_{\hbox{act}}$}  [l] at 2 6
  \put {$f_{\hbox{recon}}\quad \beta = 20$}  [l] at 2 5
  \put {$f_{\hbox{recon}}\quad \beta = 10$}  [l] at 2 4
  \put {$f_{\hbox{recon}}\quad \beta = \phantom{1}5$}  [l] at 2 3
  \put {$f_{\hbox{recon}}\quad \beta = \phantom{1}2$}  [l] at 2 2
  \put {$f_{\hbox{recon}}\quad \beta = \phantom{1}1$}  [l] at 2 1
\endpicture
}
\setbox\figureone=\vbox{\hsize=\xfiglen
\beginpicture
  \setcoordinatesystem units <\xfiglen,0.9\yfiglen>  point at 0.0 0.5
  \setplotarea x from 0 to 1, y from 0.5 to 2
\scriptsize
  \axis bottom shiftedto y=0.5 ticks short numbered from 0 to 1 by 0.2 /
  \axis left ticks short numbered from 0.5 to 2 by 0.5 /
\footnotesize
\put {\copy\figurelegendone} [rb] at 0.97 0.55
\put {$a(x)$} [lt] at 0.02 2
\setdashes <3pt>
\Black{
\plot    
         0    1.0000
    0.0167    1.1513
    0.0333    1.2891
    0.0500    1.4108
    0.0667    1.5144
    0.0833    1.5986
    0.1000    1.6624
    0.1167    1.7056
    0.1333    1.7284
    0.1500    1.7317
    0.1667    1.7165
    0.1833    1.6845
    0.2000    1.6375
    0.2167    1.5777
    0.2333    1.5073
    0.2500    1.4289
    0.2667    1.3448
    0.2833    1.2576
    0.3000    1.1696
    0.3167    1.0830
    0.3333    1.0000
    0.3500    0.9223
    0.3667    0.8516
    0.3833    0.7891
    0.4000    0.7359
    0.4167    0.6927
    0.4333    0.6599
    0.4500    0.6377
    0.4667    0.6260
    0.4833    0.6243
    0.5000    0.6321
    0.5167    0.6486
    0.5333    0.6727
    0.5500    0.7034
    0.5667    0.7395
    0.5833    0.7798
    0.6000    0.8230
    0.6167    0.8677
    0.6333    0.9129
    0.6500    0.9574
    0.6667    1.0000
    0.6833    1.0399
    0.7000    1.0762
    0.7167    1.1083
    0.7333    1.1356
    0.7500    1.1578
    0.7667    1.1746
    0.7833    1.1860
    0.8000    1.1920
    0.8167    1.1929
    0.8333    1.1889
    0.8500    1.1804
    0.8667    1.1680
    0.8833    1.1523
    0.9000    1.1337
    0.9167    1.1131
    0.9333    1.0909
    0.9500    1.0679
    0.9667    1.0447
    0.9833    1.0219
    1.0000    1.0000
 /\relax}\relax
\setsolid
\Blue{\relax   
\plot
         0    1.0000
    0.0167    1.1519
    0.0333    1.2907
    0.0500    1.4144
    0.0667    1.5209
    0.0833    1.6088
    0.1000    1.6772
    0.1167    1.7257
    0.1333    1.7544
    0.1500    1.7638
    0.1667    1.7549
    0.1833    1.7290
    0.2000    1.6879
    0.2167    1.6335
    0.2333    1.5679
    0.2500    1.4935
    0.2667    1.4125
    0.2833    1.3275
    0.3000    1.2408
    0.3167    1.1548
    0.3333    1.0715
    0.3500    0.9930
    0.3667    0.9210
    0.3833    0.8571
    0.4000    0.8024
    0.4167    0.7580
    0.4333    0.7244
    0.4500    0.7019
    0.4667    0.6906
    0.4833    0.6900
    0.5000    0.6997
    0.5167    0.7188
    0.5333    0.7463
    0.5500    0.7809
    0.5667    0.8213
    0.5833    0.8662
    0.6000    0.9140
    0.6167    0.9633
    0.6333    1.0127
    0.6500    1.0609
    0.6667    1.1066
    0.6833    1.1487
    0.7000    1.1864
    0.7167    1.2190
    0.7333    1.2458
    0.7500    1.2664
    0.7667    1.2809
    0.7833    1.2890
    0.8000    1.2911
    0.8167    1.2874
    0.8333    1.2784
    0.8500    1.2647
    0.8667    1.2469
    0.8833    1.2258
    0.9000    1.2021
    0.9167    1.1766
    0.9333    1.1500
    0.9500    1.1231
    0.9667    1.0966
    0.9833    1.0712
    1.0000    1.0473
 /\relax}\relax
\Red{\relax   
\plot
         0    1.0000
    0.0167    1.1513
    0.0333    1.2883
    0.0500    1.4088
    0.0667    1.5109
    0.0833    1.5933
    0.1000    1.6551
    0.1167    1.6963
    0.1333    1.7173
    0.1500    1.7188
    0.1667    1.7021
    0.1833    1.6689
    0.2000    1.6210
    0.2167    1.5607
    0.2333    1.4903
    0.2500    1.4122
    0.2667    1.3288
    0.2833    1.2426
    0.3000    1.1559
    0.3167    1.0709
    0.3333    0.9895
    0.3500    0.9137
    0.3667    0.8450
    0.3833    0.7846
    0.4000    0.7335
    0.4167    0.6927
    0.4333    0.6624
    0.4500    0.6431
    0.4667    0.6345
    0.4833    0.6365
    0.5000    0.6482
    0.5167    0.6683
    0.5333    0.6956
    0.5500    0.7286
    0.5667    0.7659
    0.5833    0.8060
    0.6000    0.8477
    0.6167    0.8899
    0.6333    0.9317
    0.6500    0.9725
    0.6667    1.0114
    0.6833    1.0476
    0.7000    1.0804
    0.7167    1.1092
    0.7333    1.1334
    0.7500    1.1528
    0.7667    1.1672
    0.7833    1.1764
    0.8000    1.1808
    0.8167    1.1803
    0.8333    1.1754
    0.8500    1.1664
    0.8667    1.1538
    0.8833    1.1382
    0.9000    1.1201
    0.9167    1.1001
    0.9333    1.0788
    0.9500    1.0567
    0.9667    1.0344
    0.9833    1.0124
    1.0000    0.9910
 /\relax}\relax
\OliveGreen{\plot  
        0    1.0000
    0.0167    1.1514
    0.0333    1.2889
    0.0500    1.4102
    0.0667    1.5134
    0.0833    1.5972
    0.1000    1.6607
    0.1167    1.7037
    0.1333    1.7266
    0.1500    1.7300
    0.1667    1.7153
    0.1833    1.6839
    0.2000    1.6376
    0.2167    1.5788
    0.2333    1.5095
    0.2500    1.4322
    0.2667    1.3493
    0.2833    1.2632
    0.3000    1.1764
    0.3167    1.0909
    0.3333    1.0089
    0.3500    0.9322
    0.3667    0.8624
    0.3833    0.8008
    0.4000    0.7485
    0.4167    0.7062
    0.4333    0.6746
    0.4500    0.6537
    0.4667    0.6436
    0.4833    0.6440
    0.5000    0.6540
    0.5167    0.6727
    0.5333    0.6990
    0.5500    0.7316
    0.5667    0.7692
    0.5833    0.8103
    0.6000    0.8537
    0.6167    0.8982
    0.6333    0.9426
    0.6500    0.9860
    0.6667    1.0275
    0.6833    1.0661
    0.7000    1.1010
    0.7167    1.1316
    0.7333    1.1572
    0.7500    1.1777
    0.7667    1.1928
    0.7833    1.2024
    0.8000    1.2068
    0.8167    1.2060
    0.8333    1.2005
    0.8500    1.1908
    0.8667    1.1772
    0.8833    1.1605
    0.9000    1.1412
    0.9167    1.1201
    0.9333    1.0976
    0.9500    1.0744
    0.9667    1.0512
    0.9833    1.0283
    1.0000    1.0063
 /\relax}\relax
\endpicture
}
\setbox\figureoneb=\vbox{\hsize=\xfiglen
\beginpicture
  \setcoordinatesystem units <0.4\xfiglen,0.21\yfiglen>  point at 0 0
  \setplotarea x from 0 to 2.5, y from 0 to 6.4
\scriptsize
  \axis bottom shiftedto y=0 ticks short numbered from 0 to 2.5 by 0.5 /
  \axis left ticks short numbered from 0 to 6 by 1 /
\put {\copy\figurelegendtwo} [rb] at 2.4 0.3
\footnotesize
\put {$f(u)$} [lt] at 0.03 6.4
\setdashes <3pt>
\Black{\relax
\plot
         0         0
    0.0626    0.2686
    0.1252    0.5333
    0.1879    0.7902
    0.2505    1.0359
    0.3131    1.2671
    0.3757    1.4813
    0.4384    1.6764
    0.5010    1.8511
    0.5636    2.0047
    0.6262    2.1374
    0.6889    2.2499
    0.7515    2.3439
    0.8141    2.4215
    0.8767    2.4855
    0.9394    2.5391
    1.0020    2.5861
    1.0646    2.6302
    1.1272    2.6754
    1.1898    2.7256
    1.2525    2.7846
    1.3151    2.8558
    1.3777    2.9422
    1.4403    3.0462
    1.5030    3.1698
    1.5656    3.3140
    1.6282    3.4794
    1.6908    3.6656
    1.7535    3.8715
    1.8161    4.0954
    1.8787    4.3350
    1.9413    4.5874
    2.0040    4.8491
    2.0666    5.1164
    2.1292    5.3855
    2.1918    5.6524
    2.2544    5.9133
    2.3171    6.1643
    2.3797    6.4023
 /\relax}\relax
\setsolid
\Blue{\relax
\plot
         0         0
    0.0626    0.2925
    0.1252    0.5477
    0.1879    0.7675
    0.2505    0.9537
    0.3131    1.1080
    0.3757    1.2323
    0.4384    1.3284
    0.5010    1.3980
    0.5636    1.4431
    0.6262    1.4653
    0.6889    1.4666
    0.7515    1.4486
    0.8141    1.4132
    0.8767    1.3622
    0.9394    1.2974
    1.0020    1.2207
    1.0646    1.1337
    1.1272    1.0384
    1.1898    0.9400
    1.2525    0.8927
    1.3151    0.9799
    1.3777    1.2093
    1.4403    1.5357
    1.5030    1.9128
    1.5656    2.2945
    1.6282    2.6343
    1.6908    2.8860
    1.7535    3.0097
    1.8161    3.0540
    1.8787    3.0894
    1.9413    3.1195
    2.0040    3.1428
    2.0666    3.1582
    2.1292    3.1646
    2.1918    3.1610
    2.2544    3.1463
    2.3171    3.1195
    2.3797    3.0794
 /\relax}\relax
\Red{\relax 
\plot
        0         0
    0.0626    0.2832
    0.1252    0.5362
    0.1879    0.7606
    0.2505    0.9582
    0.3131    1.1306
    0.3757    1.2796
    0.4384    1.4068
    0.5010    1.5139
    0.5636    1.6027
    0.6262    1.6748
    0.6889    1.7319
    0.7515    1.7757
    0.8141    1.8079
    0.8767    1.8302
    0.9394    1.8442
    1.0020    1.8518
    1.0646    1.8546
    1.1272    1.8542
    1.1898    1.8541
    1.2525    1.8813
    1.3151    1.9773
    1.3777    2.1469
    1.4403    2.3691
    1.5030    2.6227
    1.5656    2.8864
    1.6282    3.1387
    1.6908    3.3585
    1.7535    3.5274
    1.8161    3.6697
    1.8787    3.8185
    1.9413    3.9722
    2.0040    4.1276
    2.0666    4.2828
    2.1292    4.4359
    2.1918    4.5853
    2.2544    4.7289
    2.3171    4.8651
    2.3797    4.9920
 /\relax}\relax
\OliveGreen{\relax
\plot
         0         0
    0.0626    0.3475
    0.1252    0.6580
    0.1879    0.9340
    0.2505    1.1780
    0.3131    1.3924
    0.3757    1.5796
    0.4384    1.7421
    0.5010    1.8824
    0.5636    2.0028
    0.6262    2.1058
    0.6889    2.1938
    0.7515    2.2694
    0.8141    2.3349
    0.8767    2.3928
    0.9394    2.4456
    1.0020    2.4956
    1.0646    2.5454
    1.1272    2.5973
    1.1898    2.6539
    1.2525    2.7180
    1.3151    2.7942
    1.3777    2.9043
    1.4403    3.0617
    1.5030    3.2521
    1.5656    3.4591
    1.6282    3.6665
    1.6908    3.8590
    1.7535    4.0494
    1.8161    4.2530
    1.8787    4.4680
    1.9413    4.6943
    2.0040    4.9301
    2.0666    5.1713
    2.1292    5.4139
    2.1918    5.6539
    2.2544    5.8870
    2.3171    6.1092
    2.3797    6.3164
 /\relax}\relax
\endpicture
}
\setbox\figuretwo=\vbox{\hsize=\xfiglen
\beginpicture
  \setcoordinatesystem units <\xfiglen,0.75\yfiglen>  point at 0.0 0.4
  \setplotarea x from 0 to 1, y from 0.4 to 2
\scriptsize
  \axis bottom shiftedto y=0.4 ticks short numbered from 0 to 1 by 0.2 /
  \axis left ticks short numbered from 0.5 to 2 by 0.5 /
\put {\copy\figurelegendthree} [rt] at 0.97 2
\footnotesize
\setquadratic
\put {$a(x)$} [l] at 0.02 2
\setdashes <3pt>
\Black{
\plot    
         0    1.0000
    0.0250    1.1526
    0.0500    1.2939
    0.0750    1.4212
    0.1000    1.5319
    0.1250    1.6240
    0.1500    1.6963
    0.1750    1.7480
    0.2000    1.7787
    0.2250    1.7887
    0.2500    1.7788
    0.2750    1.7502
    0.3000    1.7046
    0.3250    1.6438
    0.3500    1.5701
    0.3750    1.4860
    0.4000    1.3940
    0.4250    1.2968
    0.4500    1.1970
    0.4750    1.0973
    0.5000    1.0000
    0.5250    0.9075
    0.5500    0.8217
    0.5750    0.7445
    0.6000    0.6774
    0.6250    0.6215
    0.6500    0.5777
    0.6750    0.5463
    0.7000    0.5277
    0.7250    0.5216
    0.7500    0.5276
    0.7750    0.5450
    0.8000    0.5727
    0.8250    0.6095
    0.8500    0.6542
    0.8750    0.7052
    0.9000    0.7610
    0.9250    0.8200
    0.9500    0.8805
    0.9750    0.9410
    1.0000    1.0000
/\relax}\relax
\setsolid
\Blue{\relax   
\plot
         0    0.5042
    0.0250    0.5675
    0.0500    0.6352
    0.0750    0.7014
    0.1000    0.7660
    0.1250    0.8242
    0.1500    0.8725
    0.1750    0.9134
    0.2000    0.9485
    0.2250    0.9744
    0.2500    0.9890
    0.2750    0.9947
    0.3000    0.9936
    0.3250    0.9838
    0.3500    0.9641
    0.3750    0.9372
    0.4000    0.9054
    0.4250    0.8680
    0.4500    0.8250
    0.4750    0.7792
    0.5000    0.7326
    0.5250    0.6858
    0.5500    0.6397
    0.5750    0.5967
    0.6000    0.5588
    0.6250    0.5268
    0.6500    0.5019
    0.6750    0.4855
    0.7000    0.4789
    0.7250    0.4821
    0.7500    0.4952
    0.7750    0.5182
    0.8000    0.5507
    0.8250    0.5917
    0.8500    0.6396
    0.8750    0.6934
    0.9000    0.7521
    0.9250    0.8138
    0.9500    0.8763
    0.9750    0.9385
    1.0000    0.9999
/\relax}\relax
\setsolid
\OliveGreen{\relax   
\plot
         0    0.8887
    0.0250    1.0030
    0.0500    1.1156
    0.0750    1.2173
    0.1000    1.3084
    0.1250    1.3810
    0.1500    1.4306
    0.1750    1.4621
    0.2000    1.4793
    0.2250    1.4780
    0.2500    1.4561
    0.2750    1.4192
    0.3000    1.3716
    0.3250    1.3120
    0.3500    1.2403
    0.3750    1.1618
    0.4000    1.0802
    0.4250    0.9959
    0.4500    0.9099
    0.4750    0.8261
    0.5000    0.7469
    0.5250    0.6730
    0.5500    0.6054
    0.5750    0.5460
    0.6000    0.4960
    0.6250    0.4557
    0.6500    0.4253
    0.6750    0.4054
    0.7000    0.3964
    0.7250    0.3982
    0.7500    0.4106
    0.7750    0.4335
    0.8000    0.4670
    0.8250    0.5104
    0.8500    0.5626
    0.8750    0.6230
    0.9000    0.6908
    0.9250    0.7643
    0.9500    0.8412
    0.9750    0.9200
    1.0000    0.9999
/\relax}\relax
\Maroon{\relax
\plot
         0    0.9807
    0.0250    1.1088
    0.0500    1.2365
    0.0750    1.3535
    0.1000    1.4600
    0.1250    1.5473
    0.1500    1.6098
    0.1750    1.6529
    0.2000    1.6806
    0.2250    1.6879
    0.2500    1.6722
    0.2750    1.6393
    0.3000    1.5940
    0.3250    1.5343
    0.3500    1.4600
    0.3750    1.3769
    0.4000    1.2891
    0.4250    1.1968
    0.4500    1.1012
    0.4750    1.0066
    0.5000    0.9161
    0.5250    0.8304
    0.5500    0.7510
    0.5750    0.6801
    0.6000    0.6196
    0.6250    0.5698
    0.6500    0.5312
    0.6750    0.5045
    0.7000    0.4904
    0.7250    0.4883
    0.7500    0.4976
    0.7750    0.5182
    0.8000    0.5492
    0.8250    0.5893
    0.8500    0.6368
    0.8750    0.6906
    0.9000    0.7497
    0.9250    0.8120
    0.9500    0.8752
    0.9750    0.9381
    1.0000    0.9999
/\relax}\relax
\Red{\relax
\plot
         0    1.0169
    0.0250    1.1507
    0.0500    1.2850
    0.0750    1.4090
    0.1000    1.5227
    0.1250    1.6168
    0.1500    1.6856
    0.1750    1.7345
    0.2000    1.7675
    0.2250    1.7794
    0.2500    1.7671
    0.2750    1.7366
    0.3000    1.6930
    0.3250    1.6338
    0.3500    1.5589
    0.3750    1.4740
    0.4000    1.3837
    0.4250    1.2879
    0.4500    1.1879
    0.4750    1.0882
    0.5000    0.9923
    0.5250    0.9009
    0.5500    0.8154
    0.5750    0.7386
    0.6000    0.6724
    0.6250    0.6172
    0.6500    0.5736
    0.6750    0.5426
    0.7000    0.5244
    0.7250    0.5188
    0.7500    0.5249
    0.7750    0.5422
    0.8000    0.5703
    0.8250    0.6075
    0.8500    0.6522
    0.8750    0.7033
    0.9000    0.7596
    0.9250    0.8191
    0.9500    0.8797
    0.9750    0.9402
    1.0000    0.9999
/\relax}\relax
\endpicture
}
\setbox\figuretwob=\vbox{\hsize=\xfiglen
\beginpicture
  \setcoordinatesystem units <0.7\xfiglen,0.48\yfiglen>  point at 0.0 -0.5
  \setplotarea x from 0 to 1.4, y from -0.5 to 2
\scriptsize
  \axis bottom shiftedto y=0 ticks short numbered from 0.25 to 1.25 by 0.25 /
  \axis left ticks short numbered from -0.5 to 2 by 0.5 /
\put {\copy\figurelegendfour} [rt] at 1.2 2
\footnotesize
\setquadratic
\put {$f(u)$} [l] at 0.02 2
\setdashes <3pt>
\Black{
\plot    
         0         0
    0.0203    0.3736
    0.0405    0.6890
    0.0608    0.9531
    0.0810    1.1719
    0.1013    1.3509
    0.1215    1.4949
    0.1418    1.6083
    0.1621    1.6950
    0.1823    1.7585
    0.2026    1.8018
    0.2228    1.8277
    0.2431    1.8387
    0.2633    1.8369
    0.2836    1.8242
    0.3038    1.8024
    0.3241    1.7729
    0.3444    1.7371
    0.3646    1.6961
    0.3849    1.6510
    0.4051    1.6027
    0.4254    1.5518
    0.4456    1.4992
    0.4659    1.4453
    0.4862    1.3908
    0.5064    1.3360
    0.5267    1.2813
    0.5469    1.2270
    0.5672    1.1734
    0.5874    1.1207
    0.6077    1.0692
    0.6280    1.0188
    0.6482    0.9698
    0.6685    0.9223
    0.6887    0.8763
    0.7090    0.8318
    0.7292    0.7890
    0.7495    0.7478
    0.7697    0.7083
    0.7900    0.6703
    0.8103    0.6340
    0.8305    0.5993
    0.8508    0.5661
    0.8710    0.5345
    0.8913    0.5043
    0.9115    0.4757
    0.9318    0.4484
    0.9521    0.4225
    0.9723    0.3979
    0.9926    0.3746
    1.0128    0.3525
    1.0331    0.3315
    1.0533    0.3117
    1.0736    0.2930
    1.0939    0.2753
    1.1141    0.2586
    1.1344    0.2428
    1.1546    0.2279
    1.1749    0.2138
    1.1951    0.2006
    1.2154    0.1881
    1.2357    0.1764
    1.2559    0.1653
    1.2762    0.1549
    1.2964    0.1451
    1.3167    0.1359
    1.3369    0.1272
    1.3572    0.1191
    1.3774    0.1115
/\relax}\relax
\setsolid
\Blue{\relax   
\plot
         0    0.0404
    0.0203    0.3089
    0.0405    0.5526
    0.0608    0.7545
    0.0810    0.9175
    0.1013    1.0471
    0.1215    1.1481
    0.1418    1.2240
    0.1621    1.2780
    0.1823    1.3125
    0.2026    1.3299
    0.2228    1.3323
    0.2431    1.3216
    0.2633    1.2995
    0.2836    1.2676
    0.3038    1.2272
    0.3241    1.1796
    0.3444    1.1258
    0.3646    1.0669
    0.3849    1.0036
    0.4051    0.9366
    0.4254    0.8668
    0.4456    0.7945
    0.4659    0.7203
    0.4862    0.6445
    0.5064    0.5677
    0.5267    0.4900
    0.5469    0.4117
    0.5672    0.3331
    0.5874    0.2543
    0.6077    0.1756
    0.6280    0.0970
    0.6482    0.0187
    0.6685   -0.0593
    0.6887   -0.1367
    0.7090   -0.2137
    0.7292   -0.2900
    0.7495   -0.3656
    0.7697   -0.4405
    0.7900   -0.5146
    0.8103   -0.5880
/\relax}\relax
\setsolid
\OliveGreen{\relax   
\plot
         0    0.0531
    0.0203    0.4073
    0.0405    0.7322
    0.0608    1.0042
    0.0810    1.2274
    0.1013    1.4094
    0.1215    1.5566
    0.1418    1.6738
    0.1621    1.7650
    0.1823    1.8335
    0.2026    1.8820
    0.2228    1.9134
    0.2431    1.9298
    0.2633    1.9335
    0.2836    1.9262
    0.3038    1.9098
    0.3241    1.8857
    0.3444    1.8553
    0.3646    1.8199
    0.3849    1.7804
    0.4051    1.7378
    0.4254    1.6930
    0.4456    1.6465
    0.4659    1.5990
    0.4862    1.5511
    0.5064    1.5030
    0.5267    1.4553
    0.5469    1.4081
    0.5672    1.3619
    0.5874    1.3167
    0.6077    1.2727
    0.6280    1.2302
    0.6482    1.1892
    0.6685    1.1499
    0.6887    1.1122
    0.7090    1.0764
    0.7292    1.0425
    0.7495    1.0105
    0.7697    0.9804
    0.7900    0.9523
    0.8103    0.9261
    0.8305    0.9019
    0.8508    0.8796
    0.8710    0.8593
    0.8913    0.8408
    0.9115    0.8241
    0.9318    0.8093
    0.9521    0.7964
    0.9723    0.7855
    0.9926    0.7768
    1.0128    0.7703
    1.0331    0.7663
    1.0533    0.7650
    1.0736    0.7666
    1.0939    0.7709
    1.1141    0.7778
    1.1344    0.7869
    1.1546    0.7976
    1.1749    0.8092
    1.1951    0.8211
    1.2154    0.8330
    1.2357    0.8446
    1.2559    0.8561
    1.2762    0.8675
    1.2964    0.8791
    1.3167    0.8909
    1.3369    0.9031
    1.3572    0.9156
    1.3774    0.9283
/\relax}\relax
\Maroon{\relax
\plot
         0    0.0491
    0.0203    0.3785
    0.0405    0.6842
    0.0608    0.9415
    0.0810    1.1529
    0.1013    1.3251
    0.1215    1.4640
    0.1418    1.5740
    0.1621    1.6588
    0.1823    1.7214
    0.2026    1.7646
    0.2228    1.7907
    0.2431    1.8021
    0.2633    1.8007
    0.2836    1.7884
    0.3038    1.7668
    0.3241    1.7374
    0.3444    1.7016
    0.3646    1.6605
    0.3849    1.6152
    0.4051    1.5667
    0.4254    1.5157
    0.4456    1.4630
    0.4659    1.4092
    0.4862    1.3548
    0.5064    1.3003
    0.5267    1.2461
    0.5469    1.1925
    0.5672    1.1397
    0.5874    1.0881
    0.6077    1.0379
    0.6280    0.9891
    0.6482    0.9419
    0.6685    0.8965
    0.6887    0.8529
    0.7090    0.8113
    0.7292    0.7716
    0.7495    0.7340
    0.7697    0.6985
    0.7900    0.6653
    0.8103    0.6343
    0.8305    0.6056
    0.8508    0.5793
    0.8710    0.5554
    0.8913    0.5339
    0.9115    0.5147
    0.9318    0.4978
    0.9521    0.4830
    0.9723    0.4702
    0.9926    0.4592
    1.0128    0.4498
    1.0331    0.4416
    1.0533    0.4344
    1.0736    0.4281
    1.0939    0.4223
    1.1141    0.4169
    1.1344    0.4118
    1.1546    0.4070
    1.1749    0.4024
    1.1951    0.3982
    1.2154    0.3942
    1.2357    0.3904
    1.2559    0.3870
    1.2762    0.3837
    1.2964    0.3806
    1.3167    0.3776
    1.3369    0.3748
    1.3572    0.3721
    1.3774    0.3695
/\relax}\relax
\Red{\relax
\plot
         0    0.0489
    0.0203    0.3779
    0.0405    0.6845
    0.0608    0.9432
    0.0810    1.1562
    0.1013    1.3301
    0.1215    1.4708
    0.1418    1.5827
    0.1621    1.6693
    0.1823    1.7338
    0.2026    1.7787
    0.2228    1.8065
    0.2431    1.8194
    0.2633    1.8193
    0.2836    1.8082
    0.3038    1.7876
    0.3241    1.7590
    0.3444    1.7238
    0.3646    1.6831
    0.3849    1.6380
    0.4051    1.5893
    0.4254    1.5381
    0.4456    1.4848
    0.4659    1.4303
    0.4862    1.3750
    0.5064    1.3193
    0.5267    1.2637
    0.5469    1.2085
    0.5672    1.1540
    0.5874    1.1004
    0.6077    1.0479
    0.6280    0.9966
    0.6482    0.9466
    0.6685    0.8982
    0.6887    0.8512
    0.7090    0.8059
    0.7292    0.7621
    0.7495    0.7200
    0.7697    0.6795
    0.7900    0.6407
    0.8103    0.6035
    0.8305    0.5679
    0.8508    0.5339
    0.8710    0.5015
    0.8913    0.4706
    0.9115    0.4411
    0.9318    0.4131
    0.9521    0.3865
    0.9723    0.3613
    0.9926    0.3374
    1.0128    0.3147
    1.0331    0.2932
    1.0533    0.2728
    1.0736    0.2536
    1.0939    0.2354
    1.1141    0.2182
    1.1344    0.2020
    1.1546    0.1867
    1.1749    0.1723
    1.1951    0.1587
    1.2154    0.1459
    1.2357    0.1339
    1.2559    0.1226
    1.2762    0.1120
    1.2964    0.1021
    1.3167    0.0927
    1.3369    0.0839
    1.3572    0.0753
    1.3774    0.0670
/\relax}\relax
\endpicture
}
\setbox\figurethree=\vbox{\hsize=\xfiglen
\beginpicture
\small
  \setcoordinatesystem units <\xfiglen,0.75\yfiglen>  point at 0.0 0.4
\scriptsize
  \setplotarea x from 0 to 1, y from 0.4 to 2
  \axis bottom shiftedto y=0.4 ticks short numbered from 0 to 1 by 0.2 /
  \axis left ticks short numbered from 0.5 to 2 by 0.5 /
\put {\copy\figurelegendfive} [rt] at 0.97 2
\footnotesize
\setquadratic
\put {$a(x)$} [l] at 0.02 2
\setdashes <3pt>
\Black{
\plot    
         0    1.0000
    0.0250    1.1526
    0.0500    1.2939
    0.0750    1.4212
    0.1000    1.5319
    0.1250    1.6240
    0.1500    1.6963
    0.1750    1.7480
    0.2000    1.7787
    0.2250    1.7887
    0.2500    1.7788
    0.2750    1.7502
    0.3000    1.7046
    0.3250    1.6438
    0.3500    1.5701
    0.3750    1.4860
    0.4000    1.3940
    0.4250    1.2968
    0.4500    1.1970
    0.4750    1.0973
    0.5000    1.0000
    0.5250    0.9075
    0.5500    0.8217
    0.5750    0.7445
    0.6000    0.6774
    0.6250    0.6215
    0.6500    0.5777
    0.6750    0.5463
    0.7000    0.5277
    0.7250    0.5216
    0.7500    0.5276
    0.7750    0.5450
    0.8000    0.5727
    0.8250    0.6095
    0.8500    0.6542
    0.8750    0.7052
    0.9000    0.7610
    0.9250    0.8200
    0.9500    0.8805
    0.9750    0.9410
    1.0000    1.0000
/\relax}\relax
\setsolid
\Blue{\relax   
\plot
         0    0.5014
    0.0250    0.6294
    0.0500    0.7795
    0.0750    0.9358
    0.1000    1.0726
    0.1250    1.1685
    0.1500    1.2197
    0.1750    1.2381
    0.2000    1.2393
    0.2250    1.2338
    0.2500    1.2252
    0.2750    1.2129
    0.3000    1.1961
    0.3250    1.1753
    0.3500    1.1501
    0.3750    1.1173
    0.4000    1.0722
    0.4250    1.0134
    0.4500    0.9442
    0.4750    0.8707
    0.5000    0.7986
    0.5250    0.7312
    0.5500    0.6704
    0.5750    0.6176
    0.6000    0.5743
    0.6250    0.5413
    0.6500    0.5185
    0.6750    0.5051
    0.7000    0.4997
    0.7250    0.5008
    0.7500    0.5072
    0.7750    0.5187
    0.8000    0.5360
    0.8250    0.5608
    0.8500    0.5945
    0.8750    0.6381
    0.9000    0.6914
    0.9250    0.7526
    0.9500    0.8206
    0.9750    0.8989
    1.0000    1.0000
/\relax}\relax
\setsolid
\Red{\relax   
\plot
         0    0.7861
    0.0250    0.9870
    0.0500    1.2145
    0.0750    1.4433
    0.1000    1.6344
    0.1250    1.7568
    0.1500    1.8072
    0.1750    1.8055
    0.2000    1.7765
    0.2250    1.7364
    0.2500    1.6910
    0.2750    1.6404
    0.3000    1.5843
    0.3250    1.5242
    0.3500    1.4603
    0.3750    1.3888
    0.4000    1.3050
    0.4250    1.2076
    0.4500    1.1014
    0.4750    0.9941
    0.5000    0.8926
    0.5250    0.8005
    0.5500    0.7199
    0.5750    0.6518
    0.6000    0.5968
    0.6250    0.5546
    0.6500    0.5240
    0.6750    0.5036
    0.7000    0.4920
    0.7250    0.4878
    0.7500    0.4903
    0.7750    0.4992
    0.8000    0.5148
    0.8250    0.5385
    0.8500    0.5716
    0.8750    0.6151
    0.9000    0.6690
    0.9250    0.7321
    0.9500    0.8040
    0.9750    0.8887
    1.0000    1.0000
/\relax}\relax
\endpicture
}
\setbox\figurethreeb=\vbox{\hsize=\xfiglen
\beginpicture
  \setcoordinatesystem units <0.7\xfiglen,0.48\yfiglen>  point at 0.0 -0.5
\scriptsize
  \setplotarea x from 0 to 1.4, y from -0.5 to 2
  \axis bottom shiftedto y=0 ticks short numbered from 0.25 to 1.25 by 0.25 /
  \axis left ticks short numbered from -0.5 to 2 by 0.5 /
\footnotesize
\put {\copy\figurelegendsix} [rt] at 1.2 2
\setquadratic
\put {$f(u)$} [l] at 0.02 2
\setdashes <3pt>
\Black{
\plot    
    0.0000    0.0000
    0.0202    0.3726
    0.0404    0.6874
    0.0606    0.9510
    0.0808    1.1696
    0.1010    1.3486
    0.1212    1.4927
    0.1414    1.6063
    0.1616    1.6933
    0.1818    1.7571
    0.2020    1.8008
    0.2222    1.8271
    0.2424    1.8385
    0.2626    1.8371
    0.2828    1.8249
    0.3030    1.8035
    0.3232    1.7744
    0.3434    1.7390
    0.3636    1.6984
    0.3838    1.6536
    0.4040    1.6055
    0.4242    1.5549
    0.4444    1.5026
    0.4646    1.4489
    0.4848    1.3946
    0.5050    1.3399
    0.5252    1.2854
    0.5454    1.2312
    0.5656    1.1777
    0.5858    1.1251
    0.6060    1.0736
    0.6261    1.0232
    0.6463    0.9743
    0.6665    0.9267
    0.6867    0.8807
    0.7069    0.8362
    0.7271    0.7934
    0.7473    0.7521
    0.7675    0.7125
    0.7877    0.6745
    0.8079    0.6381
    0.8281    0.6033
    0.8483    0.5700
    0.8685    0.5383
    0.8887    0.5081
    0.9089    0.4793
    0.9291    0.4519
    0.9493    0.4259
    0.9695    0.4012
    0.9897    0.3778
    1.0099    0.3556
    1.0301    0.3345
    1.0503    0.3146
    1.0705    0.2958
    1.0907    0.2780
    1.1109    0.2611
    1.1311    0.2452
    1.1513    0.2302
    1.1715    0.2161
    1.1917    0.2028
    1.2119    0.1902
    1.2321    0.1784
    1.2523    0.1672
    1.2725    0.1567
    1.2927    0.1469
    1.3129    0.1376
    1.3331    0.1288
    1.3533    0.1206
    1.3735    0.1129
    1.3937    0.1057
    1.4139    0.0989
    1.4341    0.0925
    1.4543    0.0866
    1.4745    0.0810
    1.4947    0.0757
    1.5149    0.0708
    1.5553    0.0618
/\relax}\relax
\setsolid
\Blue{\relax   
\plot
   0.0000    0.0000
    0.0202    0.2331
    0.0404    0.4612
    0.0606    0.6797
    0.0808    0.8841
    0.1010    1.0705
    0.1212    1.2356
    0.1414    1.3769
    0.1616    1.4931
    0.1818    1.5835
    0.2020    1.6487
    0.2222    1.6903
    0.2424    1.7107
    0.2626    1.7132
    0.2828    1.7014
    0.3030    1.6791
    0.3232    1.6502
    0.3434    1.6180
    0.3636    1.5850
    0.3838    1.5531
    0.4040    1.5230
    0.4242    1.4942
    0.4444    1.4658
    0.4646    1.4356
    0.4848    1.4015
    0.5050    1.3611
    0.5252    1.3124
    0.5454    1.2540
    0.5656    1.1854
    0.5858    1.1071
    0.6060    1.0206
    0.6261    0.9284
    0.6463    0.8335
    0.6665    0.7392
    0.6867    0.6487
    0.7069    0.5649
    0.7271    0.4897
    0.7473    0.4238
    0.7675    0.3670
    0.7877    0.3176
    0.8079    0.2731
    0.8281    0.2306
    0.8483    0.1866
    0.8685    0.1382
    0.8887    0.0831
    0.9089    0.0201
    0.9291   -0.0507
    0.9493   -0.1278
    0.9695   -0.2083
    0.9897   -0.2885
    1.0099   -0.3642
    1.0301   -0.4313
    1.0503   -0.4863
    1.0705   -0.5271
    1.0907   -0.5531
    1.1109   -0.5656
    1.1311   -0.5679
    1.1513   -0.5648
    1.1715   -0.5623
    1.1917   -0.5669
    1.2119   -0.5845
/\relax}\relax
\Red{\relax   
\plot
    0.0000    0.0000
    0.0202    0.2649
    0.0404    0.5213
    0.0606    0.7613
    0.0808    0.9784
    0.1010    1.1679
    0.1212    1.3273
    0.1414    1.4561
    0.1616    1.5558
    0.1818    1.6298
    0.2020    1.6824
    0.2222    1.7186
    0.2424    1.7433
    0.2626    1.7608
    0.2828    1.7744
    0.3030    1.7864
    0.3232    1.7972
    0.3434    1.8064
    0.3636    1.8126
    0.3838    1.8135
    0.4040    1.8070
    0.4242    1.7909
    0.4444    1.7641
    0.4646    1.7260
    0.4848    1.6772
    0.5050    1.6193
    0.5252    1.5546
    0.5454    1.4860
    0.5656    1.4164
    0.5858    1.3488
    0.6060    1.2851
    0.6261    1.2268
    0.6463    1.1744
    0.6665    1.1273
    0.6867    1.0842
    0.7069    1.0435
    0.7271    1.0030
    0.7473    0.9609
    0.7675    0.9161
    0.7877    0.8680
    0.8079    0.8169
    0.8281    0.7640
    0.8483    0.7113
    0.8685    0.6614
    0.8887    0.6168
    0.9089    0.5798
    0.9291    0.5523
    0.9493    0.5352
    0.9695    0.5282
    0.9897    0.5301
    1.0099    0.5387
    1.0301    0.5512
    1.0503    0.5641
    1.0705    0.5744
    1.0907    0.5795
    1.1109    0.5776
    1.1311    0.5681
    1.1513    0.5517
    1.1715    0.5303
    1.1917    0.5066
    1.2119    0.4842
    1.2321    0.4664
    1.2523    0.4562
    1.2725    0.4558
    1.2927    0.4660
    1.3129    0.4859
    1.3331    0.5133
    1.3533    0.5441
    1.3735    0.5736
    1.3937    0.5961
    1.4139    0.6062
    1.4341    0.5990
    1.4543    0.5708
    1.4745    0.5199
    1.4947    0.4464
    1.5149    0.3524
    1.5553    0.1219
/\relax}\relax
\endpicture
}
\setbox\figurefive=\vbox{\hsize=\xfiglen
\beginpicture
  \setcoordinatesystem units <\xfiglen,0.8\yfiglen>  point at 0.0 0.5
  \setplotarea x from 0 to 1, y from 0.5 to 2
\scriptsize
  \axis bottom shiftedto y=0.5 ticks short numbered from 0 to 1 by 0.2 /
  \axis left ticks short numbered from 0.5 to 2 by 0.5 /
\put {\copy\figurelegendseven} [rt] at 0.95 1.98
\footnotesize
\put {$a(x)$} [lt] at 0.02 2
\put {$x$} [lb] at 1 0.535
\setsolid
\Brown{\relax   
\plot
         0    0.8421
    0.0250    0.9740
    0.0500    1.1042
    0.0750    1.2282
    0.1000    1.3422
    0.1250    1.4430
    0.1500    1.5278
    0.1750    1.5944
    0.2000    1.6415
    0.2250    1.6683
    0.2500    1.6744
    0.2750    1.6606
    0.3000    1.6279
    0.3250    1.5782
    0.3500    1.5136
    0.3750    1.4368
    0.4000    1.3509
    0.4250    1.2588
    0.4500    1.1638
    0.4750    1.0685
    0.5000    0.9757
    0.5250    0.8876
    0.5500    0.8062
    0.5750    0.7330
    0.6000    0.6691
    0.6250    0.6155
    0.6500    0.5730
    0.6750    0.5422
    0.7000    0.5236
    0.7250    0.5173
    0.7500    0.5230
    0.7750    0.5399
    0.8000    0.5673
    0.8250    0.6039
    0.8500    0.6483
    0.8750    0.6993
    0.9000    0.7553
    0.9250    0.8147
    0.9500    0.8762
    0.9750    0.9384
    1.0000    1.0000
 /\relax}\relax
\Maroon{\relax   
\plot
         0    0.9444
    0.0250    1.0918
    0.0500    1.2340
    0.0750    1.3655
    0.1000    1.4822
    0.1250    1.5809
    0.1500    1.6594
    0.1750    1.7163
    0.2000    1.7512
    0.2250    1.7643
    0.2500    1.7565
    0.2750    1.7295
    0.3000    1.6849
    0.3250    1.6250
    0.3500    1.5522
    0.3750    1.4691
    0.4000    1.3784
    0.4250    1.2826
    0.4500    1.1844
    0.4750    1.0861
    0.5000    0.9902
    0.5250    0.8988
    0.5500    0.8140
    0.5750    0.7375
    0.6000    0.6709
    0.6250    0.6154
    0.6500    0.5719
    0.6750    0.5409
    0.7000    0.5225
    0.7250    0.5166
    0.7500    0.5228
    0.7750    0.5402
    0.8000    0.5680
    0.8250    0.6049
    0.8500    0.6497
    0.8750    0.7009
    0.9000    0.7571
    0.9250    0.8165
    0.9500    0.8778
    0.9750    0.9394
    1.0000    1.0000
 /\relax}\relax
\Red{\plot  
         0    0.9973
    0.0250    1.1501
    0.0500    1.2927
    0.0750    1.4215
    0.1000    1.5336
    0.1250    1.6267
    0.1500    1.6993
    0.1750    1.7505
    0.2000    1.7804
    0.2250    1.7893
    0.2500    1.7782
    0.2750    1.7484
    0.3000    1.7016
    0.3250    1.6400
    0.3500    1.5656
    0.3750    1.4811
    0.4000    1.3890
    0.4250    1.2918
    0.4500    1.1921
    0.4750    1.0926
    0.5000    0.9956
    0.5250    0.9034
    0.5500    0.8181
    0.5750    0.7412
    0.6000    0.6745
    0.6250    0.6189
    0.6500    0.5753
    0.6750    0.5442
    0.7000    0.5258
    0.7250    0.5199
    0.7500    0.5261
    0.7750    0.5435
    0.8000    0.5713
    0.8250    0.6082
    0.8500    0.6530
    0.8750    0.7041
    0.9000    0.7600
    0.9250    0.8191
    0.9500    0.8798
    0.9750    0.9406
    1.0000    1.0000
 /\relax}\relax
\setdashes <3pt>
\Black{\relax
\plot    
         0    1.0000
    0.0250    1.1526
    0.0500    1.2939
    0.0750    1.4212
    0.1000    1.5319
    0.1250    1.6240
    0.1500    1.6963
    0.1750    1.7480
    0.2000    1.7787
    0.2250    1.7887
    0.2500    1.7788
    0.2750    1.7502
    0.3000    1.7046
    0.3250    1.6438
    0.3500    1.5701
    0.3750    1.4860
    0.4000    1.3940
    0.4250    1.2968
    0.4500    1.1970
    0.4750    1.0973
    0.5000    1.0000
    0.5250    0.9075
    0.5500    0.8217
    0.5750    0.7445
    0.6000    0.6774
    0.6250    0.6215
    0.6500    0.5777
    0.6750    0.5463
    0.7000    0.5277
    0.7250    0.5216
    0.7500    0.5276
    0.7750    0.5450
    0.8000    0.5727
    0.8250    0.6095
    0.8500    0.6542
    0.8750    0.7052
    0.9000    0.7610
    0.9250    0.8200
    0.9500    0.8805
    0.9750    0.9410
    1.0000    1.0000
 /\relax}\relax
\endpicture
}
\setbox\figuresix=\vbox{\hsize=\xfiglen
\beginpicture
  \setcoordinatesystem units <0.57\xfiglen,0.61\yfiglen>  point at 0 0
  \setplotarea x from 0 to 1.62, y from 0 to 2
\scriptsize
  \axis bottom shiftedto y=0 ticks short numbered from 0 to 1.5 by 0.5 /
  \axis left ticks short numbered from 0 to 2 by 0.5 /
\put {\copy\figurelegendeight} [rt] at 1.6 2.0
\footnotesize
\put {$f(u)$} [l] at 0.02 2
\put {$u$} [lt] at 1.61 -0.03
\setdashes <3pt>
\Black{\relax 
\plot
    0.0000    0.0000
    0.0405    0.6887
    0.0810    1.1715
    0.1215    1.4945
    0.1620    1.6947
    0.2025    1.8016
    0.2430    1.8387
    0.2834    1.8243
    0.3239    1.7732
    0.3644    1.6965
    0.4049    1.6032
    0.4454    1.4998
    0.4859    1.3915
    0.5264    1.2820
    0.5669    1.1742
    0.6074    1.0699
    0.6479    0.9706
    0.6884    0.8771
    0.7289    0.7898
    0.7694    0.7090
    0.8098    0.6347
    0.8503    0.5668
    0.8908    0.5050
    0.9313    0.4490
    0.9718    0.3985
    1.0123    0.3530
    1.0528    0.3122
    1.0933    0.2758
    1.1338    0.2432
    1.1743    0.2142
    1.2148    0.1885
    1.2553    0.1656
    1.2958    0.1454
    1.3362    0.1275
    1.3767    0.1117
    1.4172    0.0978
    1.4577    0.0856
    1.4982    0.0748
    1.5387    0.0653
    1.5792    0.0570
    1.6197    0.0497
 /\relax}\relax
\setsolid
\Blue{\relax 
\plot
    0.0000    0.0004
    0.0404    0.6068
    0.0808    1.0452
    0.1212    1.3480
    0.1616    1.5575
    0.2020    1.6984
    0.2424    1.7823
    0.2828    1.8154
    0.3233    1.8031
    0.3637    1.7521
    0.4041    1.6708
    0.4445    1.5680
    0.4849    1.4524
    0.5253    1.3315
    0.5657    1.2112
    0.6061    1.0954
    0.6465    0.9868
    0.6869    0.8863
    0.7273    0.7942
    0.7677    0.7101
    0.8081    0.6333
    0.8485    0.5634
    0.8889    0.4999
    0.9293    0.4424
    0.9698    0.3905
    1.0102    0.3441
    1.0506    0.3027
    1.0910    0.2661
    1.1314    0.2338
    1.1718    0.2056
    1.2122    0.1809
    1.2526    0.1595
    1.2930    0.1409
    1.3334    0.1249
    1.3738    0.1112
    1.4142    0.0995
    1.4546    0.0894
    1.4950    0.0807
    1.5354    0.0728
    1.5758    0.0652
 /\relax}\relax
\OliveGreen{\relax   
\plot
   0.0000    0.0005
    0.0404    0.5381
    0.0807    0.9693
    0.1211    1.2859
    0.1614    1.5094
    0.2018    1.6609
    0.2421    1.7540
    0.2825    1.7964
    0.3229    1.7935
    0.3632    1.7508
    0.4036    1.6756
    0.4439    1.5762
    0.4843    1.4616
    0.5246    1.3400
    0.5650    1.2181
    0.6054    1.1006
    0.6457    0.9904
    0.6861    0.8888
    0.7264    0.7959
    0.7668    0.7112
    0.8071    0.6341
    0.8475    0.5638
    0.8879    0.4999
    0.9282    0.4418
    0.9686    0.3894
    1.0089    0.3425
    1.0493    0.3007
    1.0896    0.2637
    1.1300    0.2312
    1.1704    0.2028
    1.2107    0.1781
    1.2511    0.1566
    1.2914    0.1381
    1.3318    0.1223
    1.3721    0.1087
    1.4125    0.0973
    1.4529    0.0876
    1.4932    0.0793
    1.5336    0.0720
    1.5739    0.0651
 /\relax}\relax
\Red{\relax 
\plot
    0.0000    0.0005
    0.0403    0.3649
    0.0807    0.7090
    0.1210    1.0093
    0.1613    1.2542
    0.2017    1.4409
    0.2420    1.5716
    0.2823    1.6504
    0.3226    1.6822
    0.3630    1.6722
    0.4033    1.6260
    0.4436    1.5501
    0.4840    1.4520
    0.5243    1.3399
    0.5646    1.2220
    0.6050    1.1054
    0.6453    0.9952
    0.6856    0.8938
    0.7259    0.8020
    0.7663    0.7190
    0.8066    0.6437
    0.8469    0.5749
    0.8873    0.5116
    0.9276    0.4534
    0.9679    0.4000
    1.0083    0.3516
    1.0486    0.3080
    1.0889    0.2691
    1.1292    0.2348
    1.1696    0.2047
    1.2099    0.1785
    1.2502    0.1559
    1.2906    0.1364
    1.3309    0.1198
    1.3712    0.1057
    1.4116    0.0940
    1.4519    0.0842
    1.4922    0.0762
    1.5325    0.0694
    1.5729    0.0634
 /\relax}\relax
\Maroon{\relax   
\plot
    0.0000    0.0003
    0.0405    0.1856
    0.0810    0.3758
    0.1215    0.5659
    0.1620    0.7504
    0.2025    0.9234
    0.2430    1.0789
    0.2834    1.2105
    0.3239    1.3122
    0.3644    1.3785
    0.4049    1.4060
    0.4454    1.3943
    0.4859    1.3476
    0.5264    1.2734
    0.5669    1.1802
    0.6074    1.0765
    0.6479    0.9703
    0.6884    0.8691
    0.7289    0.7782
    0.7694    0.6997
    0.8098    0.6328
    0.8503    0.5747
    0.8908    0.5227
    0.9313    0.4743
    0.9718    0.4282
    1.0123    0.3839
    1.0528    0.3416
    1.0933    0.3020
    1.1338    0.2653
    1.1743    0.2320
    1.2148    0.2020
    1.2553    0.1754
    1.2958    0.1519
    1.3362    0.1316
    1.3767    0.1142
    1.4172    0.0994
    1.4577    0.0871
    1.4982    0.0770
    1.5387    0.0687
    1.5792    0.0617
    1.6197    0.0553
 /\relax}\relax
\Brown{\relax 
\plot
    0.0000   -0.0009
    0.0404    0.0011
    0.0807    0.0067
    0.1211    0.0188
    0.1615    0.0402
    0.2018    0.0727
    0.2422    0.1179
    0.2826    0.1760
    0.3229    0.2469
    0.3633    0.3288
    0.4037    0.4190
    0.4441    0.5133
    0.4844    0.6060
    0.5248    0.6901
    0.5652    0.7573
    0.6055    0.7989
    0.6459    0.8080
    0.6863    0.7825
    0.7266    0.7275
    0.7670    0.6515
    0.8074    0.5638
    0.8477    0.4733
    0.8881    0.3891
    0.9285    0.3202
    0.9688    0.2736
    1.0092    0.2502
    1.0496    0.2467
    1.0899    0.2561
    1.1303    0.2697
    1.1707    0.2798
    1.2110    0.2815
    1.2514    0.2736
    1.2918    0.2573
    1.3322    0.2349
    1.3725    0.2091
    1.4129    0.1823
    1.4533    0.1559
    1.4936    0.1311
    1.5340    0.1086
    1.5744    0.0884
    1.6147    0.0704
    1.6551    0.0538
 /\relax}\relax
\endpicture
}

\begin{figure}[ht]
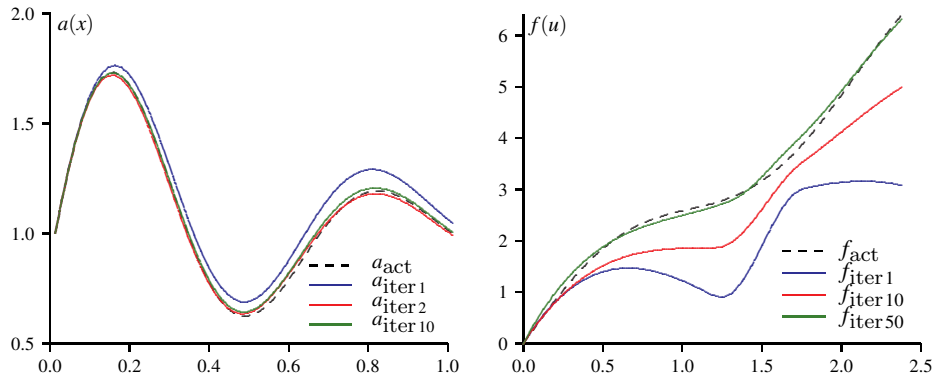

\hbox to \hsize{\hss\copy\figureone\hss\hss\copy\figureoneb\hss}
\caption{\small {\bf Recovery of $a(x)$ and $f(u)$ from final data}}
\label{fig:fiti_fiti}
\end{figure}

The initial approximations were $a^0(x)=1$ and $f^0(u)=0$.
An important point to observe is the speed of convergence of the
scheme updating $a(x)$.  The second iteration was already extremely good
and the difference between the third and tenth iteration shown would
not be distinguishable at the figure resolution.
On the other hand, the iterations for $f$ proceeded much more slowly.

One factor that should be noted here is that we used both data sets to
recover $a$ initially, and then similarly for $f$.
Much the same picture would have resulted from reversing this order.
The fact is that the diffusion coefficient $a(x)$ dominates the
equation in terms of its ability to modify solutions $u(x,t:a;f)$ in
comparison to that for $f(u)$.
This situation was true whether pointwise or basis reconstructions schemes
were used for representing the unknowns.

\begin{remark}\label{remark:less data}
In higher space dimensions it is possible from a reconstruction perspective
to measure less data. For example, in $\mathbb{R}^d$ with $d=2,\,3$ one
data run for $g_u(x) = u(x,T)$ measures this quantity for all $x\in\Omega$, but
the second run measures only $g_v(x) = v(s(x),T)$ for some
one-dimensional curve $C$ parametrized by $s(x)$ that connects points
$x_1,\,x_2$ on $\partial\Omega$.
The strategy is to use the first of these to recover $a(x)$ then since $f(u)$
depends only the single variable $u$ we seek to recover $f(s(x))$ from this
second measurement.
The points $x_1$ and $x_2$ cannot be chosen arbitrarily but must in fact
correspond to the maximum and minimum values of $v(\cdot,T)$ on
$\partial\Omega$ in order to satisfy the range condition necessary
for recovering $f$.
We note here that we have  no theoretical analysis for this case;
neither a uniquness result nor a convergence theorem.
\end{remark}

\subsection{Final time and time-trace data}\label{sect:fitititr}

As noted previously, an alternative data measurement set is to measure both
boundary values as well as final time information, namely
\begin{equation}\label{eqn:fitititr_data}
u(x,T) = g_u(x) \qquad u(\tilde x,t) = h_u(t)
\end{equation}
where we have assumed that $\tilde x \in\partial\Omega$ is such that that
the maximum range of $u(x,t)$ occurs at that point.
It will also be convenient to take the impedance value to be zero
at $\tilde x$ and arrange that $g_u(x)$ is monotonic in $\Omega$.

\begin{remark}\label{remark:energy}
We have chosen a particular choice of a time trace measurements.
Other possibilities include a time trace of $u(x^\star,t)$ for
some interior point $x^\star\in\Omega$.
The difficulty here is in ensuring that the range condition is satisfied
and this is likely to be prohibited by the maximum principle.
An alternative is a measurement of the energy
$E(t) = \int_\Omega |\nabla u|^2\,dx$ or some similar functional.
Again, a determining factor would be the range condition.
\end{remark}

Again, starting from the fixed point iteration defined by
\eqref{eqn:fp_fiti-titr}, we derive a numerical reconstruction algorithm.
There are several ways to proceed and we start with pointwise updates
of both $a(x)$ and $f(u)$ in sequence at each full iteration step.

We again let $f^0$, $a^0$ be some initial approximation, then for
$k=0,1,2,\ldots$
\begin{itemize}
\item{}
Compute $u(x,t;a^k,f^k)$.
Update $a(x)$ using the final time information $g_u(x)$ by setting
\begin{equation}\label{eqn:a_successive-update_2}
\begin{aligned}
\phi(x) &= u_t(x,T;a^k,f^k) - r_u(x,T,gu) - f^k\bigl(g_u(x)\bigr)
\qquad \Phi(x) = \int_0^x \phi(s)\,ds \\
a^{k+1}(x) &= \Phi(x)/g_u'(x) \\
\end{aligned}
\end{equation}
\item{}
Now update $f(u)$ by projecting onto the boundary at $x=\tilde x$ using the
data $h_u(t)$ and computing $\triangle u(\tilde x,t) = u_{xx}(\tilde x,t)$
so that
\begin{equation}
\psi(t) = h_u(t) - r_u\bigl(\tilde x,t,h_u(t)\bigr)
- a^{k+1}(\tilde x) u_{xx}(\tilde x,t).
\end{equation}
Here the advantage of the imposed boundary condition $u_x(\tilde x,t) = 0$
is evident as there is no need to involve the derivative of $a$.
\item{}
Now if $h_u(t)$ is monotone then $f = f^{k+1}(u)$ is recovered from
\begin{equation}
f^{k+1}\bigl(h_u(t)\bigr) = \psi(t).
\end{equation}
\end{itemize}

In Figure~\oldref{fig:fiti_titr_no_noise} we show a pair of reconstructions of
$a(x)$ and $f(u)$ using the above scheme.
The spatial interval was taken as $0,L$ with $L=1$ and the final time
of measurement was $T=0.5$.
Thus data consisted of measurements of  $g(x_i) = u(x_i,T)$ and
$h(t_j)=u(L,t_j)$ where the numbers of sampled points were
$N_x=20$ and $N_t=25$.

In this reconstruction the initial values were once again
taken as $a^0 = 1$ and $f^0=0$.
In most applications a better initial approximation would likely be available.

As can be seen from this figure, the convergence of the scheme was rather
slow initially  and in fact the first approximation to the diffusion
coefficient $a(x)$ and the reaction $f(u)$ were both uniformly lower than
the actual values.
One might expect that a uniform too small value of $a$ would result
in an $f$ that was larger then the actual and this did occur on the second
iteration for $f$.  The reason it did not occur at the first iterative step
is at each step the updates for $a$ and $f$ are independent:
the former only using $g(x)$ and the latter only using $h(t)$.

This in fact indicates an almost limiting distance apart of
the initial approximations and the actual values of $a$ and $f$.
If an iterate of $a$ becomes too close to a zero value the scheme becomes
very unstable.
Also, in the presence of noise, one would in general have to take
a much closer initial approximation.

The reason for the final small amount of mis-fit of the reconstructed
and the actual functions is the necessity to use a smoothing scheme on the
data to restore the correct mapping properties.

\begin{figure}[ht]
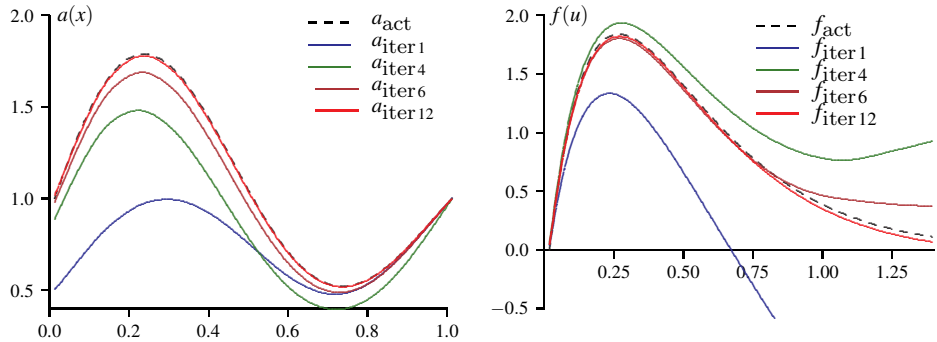

\hbox to \hsize{\hss\copy\figuretwo\hss\hss\copy\figuretwob\hss}
\caption{\small {\bf Recovery of $\,a(x)\,$ and $\,f(u)\,$ from space-and-time data}}
\label{fig:fiti_titr_no_noise}
\end{figure}

\vspace{.2in}

\begin{figure}[ht]
\hbox to \hsize{\hss\copy\figurethree\hss}
\label{fig:fiti_titr_percent_noise}
\end{figure}

\begin{figure}[ht]
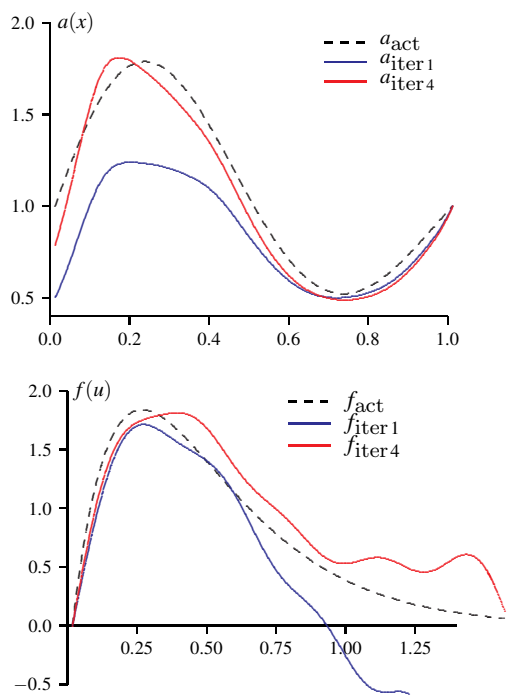

\hbox to \hsize{\hss\copy\figurethreeb\hss}
\caption{\small {\bf Recovery of $\,a(x),$ and $\,f(u)\,$ from space-and-time data: 1\% noise}}
\label{fig:fiti_titr_percent_noise}
\end{figure}

Notice here the considerable degradation of the reconstructions under
even modest amounts of noise.
This is not an artifact of the method but inherent in the problem.
Reconstructions using a pair of final values as in the last section
are more forgiving under data noise but still show the limitations
of reconstructions in these inverse problems.

\subsection{Non-trivial initial values}\label{sec:u0zero}
In section~\oldref{sec:conv} we required that the initial value $u_0(x) = u(x,0) = 0$.
This was to remove a possible singularity arising from the convolution
in equation~\eqref{eqn:checku0}.
This issue has been noted in previous work attempting to recover
a reaction term $f(u)$ from time-valued data on the boundary $\partial\Omega$,
\cite{PilantRundell:1986,PilantRundell:1987,KaltenbacherRundell:2020b}
even when this term is the only unknown in the model.
The question thus arises if this is a fundamental requirement or
merely a delicate technical issue to be surmounted.

Suppose that $u$ satisfies
$$
u_t - (au_x)_x = f(u) + r(x,t),\qquad u(x,0) = u_0(x),\quad
{\mathbb B} u = \phi
$$
where ${\mathbb B}$ is the boundary operator on $\partial\Omega$,
$\Omega=(0,1)$.
For continuity we should  impose the condition ${\mathbb B} u_0 = \phi$
at $t=0$ but, in particular we suppose $u_0$ is positive and has support
contained in the strict interior of $(0,1)$ to remove all boundary effects.

Now let $v(x,t) = u(x,t) - u_0(x)$ so that $v$ satisfies
$$
v_t - (av_x)_x = \tilde f(v) + s(x,t),\qquad v(x,0) = 0 ,\quad {\mathbb B} v = 0
$$
where $\tilde f(v) = f(u) = f(v+u_0)$ and $s(x,t) = r(x,t) - (a u_0')'$.

The overposed data $u(1,t)=h(t)$ is converted into $v=\tilde h(t)$ but
relating $\tilde h$ to $h$ follows from the fact that $u_0$ vanishes
at $x=0$ and $x=1$ gives that  $\tilde h(t) = h(t)$.

Thus in principle, the problem with nontrivial $u_0$ can be converted into
one with $u_0=0$ by the above transformation:
we get a new, known, right hand side driving term
(assuming $a(x)$ is known) and reconstruct as before to get
$\tilde f$.  Then from this, knowing $u_0$, recover $f$.
Does this then amount to resolution of the issue?
Not quite.  The previous convolution situation is not resolved by this
as the argument of $f$ in $\tilde f$ is now shifted.

However, the transformation does point to a definite issue for nontrivial $u_0$.
We have to ensure that the standard range condition is satisfied:
$h(t)$ must encompass all values that the domain of $f$ requires.
The given data $h(t)$ is the same for both $u$ and $v$ but
$\tilde f$ requires a different range than $f$ (since $u_0>0$)
and thus appears to have missing information.

In Figure~\oldref{fig:fiti_titr_u0} below we show final reconstructions
(taken as the $10^{\hbox{\sevenrm th}}$ iteration which certainly corresponded
to effective numerical convergence) for both $a(x)$ and $f(u)$.
This was under no data-noise conditions and sampling at a large number of
points.
A nontrivial value of $u_0$ was taken; namely $u_0(x) = \beta\,x^2(1-x^2)$.
for the values $\,\beta = 1,\,2,\,5,\,10,\,20$.
This function and its first derivative is zero at both endpoints
so ${\mathbb B}u_0 = 0$. The value $\beta=20$ corresponds to a maximum value of
$1.25$ for $u_0$; $\beta=1$ gives a height of $0.0625$.
The rightmost graphic shows the reconstructed $f(u)$ for these choices.
In the case $\beta =0$ the reconstruction would have been indistinguishable
from the actual $f$ as the data was noise-free and sampled at a large number
of points.
Notice how rapidly the reconstructed $f$ deteriorates  from the actual
with increasing $\beta$ and hence size of $u_0$, as the
range condition violation,
which predominantly affected the smaller values of $u$, became stronger.
For sufficiently large $u_0$ this finally affects the entire reconstruction.

\begin{figure}[ht]
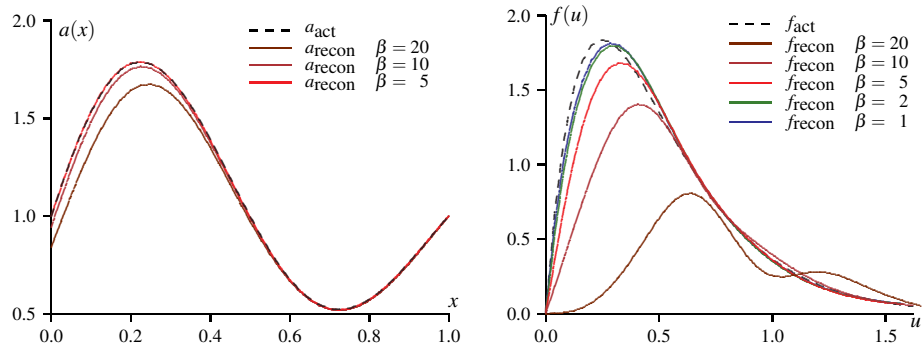

\hbox to \hsize{\hss\copy\figurefive\hss\hss\copy\figuresix\hss}
\caption{\small {\bf Recovery of $\,a(x)\,$ and $\,f(u)\,$ from space-and-time data when $u_0\not=0$}}
\label{fig:fiti_titr_u0}
\end{figure}

The coefficient $a(x)$ which is reconstructed solely from the final time
values $g(x)=u(x,T)$ is relatively immune to this effect -
until the reconstructed $f$ becomes sufficiently poor that this has an
effect on subsequent iterations of the combined scheme.
Once again, the smaller values of the data $g(x)$ used are most sensitive
to a nontrivial $u_0$ and since our $g(x)$ function is monotonically
increasing away from $x=0$, the values of $a(x)$ nearer this point are
most affected.
The leftmost graphic shows only values for $\beta=5$, $\beta=10$ and $\beta=20$ since
for smaller $\beta$ the reconstructed $a$ is indistinguishable from the actual
$a(x)$ at the resolution of the figure.
However, this does also illustrate the point that the diffusion term
$\nabla.(a\nabla u) $ plays a dominant role in the behaviour of the equation.

The experiment was repeated but with $u_0$ taken to be a narrow Gaussian
of unit fixed height and center $x_0$.
A similar effect to the above was observed:
a greater disparity between the reconstructions and the actual $f$
occurred as $x_0$ approached the boundary $x=1$ where the time-trace data
$h(t) = u(1,t)$ was measured.
This is in keeping with fact that the initial value is perturbing the
range condition and this is more pronounced the closer the support of the
perturbation is to the measurement boundary.

In short, nontrivial initial values used either directly or through the
lifting device can be used to extract information on both $a$ and $f$,
but this has a definite limitation which becomes more acute as $u_0(x)$
increases in magnitude and so more significantly affects the range condition.

\section*{Acknowledgments}

\noindent
The work of the first author was supported by the Austrian Science Fund {\sc fwf}
under the grant P30054.
%
The work of the second author was supported in part by the
National Science Foundation through award {\sc dms}-1620138.

Moreover, the authors would like to thank both reviewers for their careful
reading of the manuscript and their valuable comments and suggestions that
have led to an improved version of the paper.


\medskip
Received March 2020; revised May 2020.
\medskip

{\it E-mail address: }barbara.kaltenbacher@aau.at\\

\indent {\it E-mail address: }rundell@math.tamu.edu\\

\end{document}